\let\mathcal=\CMcal
\newcommand{\setdiff}{\ensuremath{-}}
\newcommand{\setof}[2]{\ensuremath{\{ #1 : #2 \}}}
\newcommand{\isomorphic}{\ensuremath{\cong}}
\newcommand{\mathe}{\ensuremath{\mathrm{e}}}
\newcommand{\transpose}[1]{\ensuremath{\rconj{t}#1}}
\def\Hom{\mathrm{Hom}}
\def\Tri{\mathrm{Tri}}
\def\Dim{\mathrm{dim}}
\newcommand{\nonarchimedean}{non-Archimedean} %usage: \nonarchimedean\
\newcommand{\GL}{\ensuremath{\mathrm{GL}}}
\newcommand{\Sp}{\ensuremath{\mathrm{Sp}}}
\newcommand{\SL}{\ensuremath{\mathrm{SL}}}
\newcommand{\GSpin}{\ensuremath{\mathrm{GSpin}}}
\newcommand{\SO}{\ensuremath{\mathrm{SO}}}
\newcommand{\Spin}{\ensuremath{\mathrm{Spin}}}
\newcommand{\rmodulo}[2]{#1 / #2} %usage: $\rmodulo{group}{subgroup}$
\newcommand{\glnidx}{\ensuremath{n}}
\newcommand{\half}{\ensuremath{\frac12}}
\newcommand{\lmodulo}[2]{\ensuremath{#1 \backslash #2}}
\newcommand{\rconj}[1]{\ensuremath{{}^{#1}}}
\def\Ind{\operatorname{Ind}}
\def\ind{\operatorname{ind}}
\newcommand{\C}{\ensuremath{\mathbb{C}}}
\newcommand{\Adele}{\ensuremath{\mathbb{A}}}
\newcommand{\rhs}{right-hand side} %usage: \rhs\
\newcommand{\lhs}{left-hand side} %usage: \lhs\
\newcommand{\cover}[1]{\ensuremath{\widetilde{#1}}}
\newcommand{\setdifference}{\ensuremath{\mathrm{-}}}
\newtheorem{theorem}{Theorem}
\newtheorem{lemma}{Lemma}[section]
\newtheorem{proposition}[lemma]{Proposition}
\newtheorem{corollary}[lemma]{Corollary}
\newtheorem{claim}[lemma]{Claim}
\newtheorem{remark}{Remark}[section]
\numberwithin{equation}{section}
\begin{document}
\title[]{Theta distinguished representations, inflation and the symmetric square $L$-function}
\author{Eyal Kaplan}
\email{kaplaney@gmail.com}

\maketitle%\let\thefootnote\relax\footnote{Author partially supported by the ISF Center of Excellence grant \#1691/10.}
\begin{abstract}
Let $\Pi_0$ be a representation of a group $H$. We say that a representation $\tau$ is $(H,\Pi_0)$-distinguished, if it is
a quotient of $\Pi_0$. It is natural to ask whether this notion ``inflates" to larger groups, in the sense that a representation $\mathrm{I}(\tau)$
induced from $\tau$ and $H$ to a group $G$, is $(G,\Pi)$-distinguished. We study representations distinguished by theta representations:  
$H=\GL_n$, $\Pi_0$ is a pair of the exceptional representations of Kazhdan and Patterson, $G=\GSpin_{2n+1}$ and $\Pi$ is a pair of 
the small representations of Bump, Friedberg and Ginzburg. We prove a Rodier-type hereditary property: a tempered representation $\tau$ is distinguished
if and only if $\mathrm{I}(\tau)$ is distinguished, and the multiplicity in each model is the same.
If $\tau$ is supercuspidal and distinguished, we prove that the Langlands quotient of $\mathrm{I}(\tau)$ is distinguished. As a corollary, we
characterize supercuspidal distinguished representations, in terms of the pole of the local symmetric square $L$-function at $s=0$.
\end{abstract}
%\begin{abstract}
%Let $\Pi_0$ be a representation of a classical group $H$. We say that an irreducible representation $\tau$ is $(H,\Pi_0)$-distinguished, if it is
%a quotient of $\Pi_0$. It is natural to ask whether this notion ``inflates" to larger groups, in the sense that a representation induced
%from $\tau$ and $H$ to a group $G$, is $(G,\Pi)$-distinguished. %, for a representation $\Pi'$ related to $\Pi$.
%Assume
%$\tau$ is a supercuspidal $(\GL_n,\theta_0\otimes\theta_0')$-distinguished representation, where
%$\theta_0$ and $\theta_0'$ are the exceptional representation of Kazhdan and Patterson. We prove that the Langlands quotient
%of the representation induced to $\GSpin_{2n+1}$ is $(\GSpin_{2n+1},\theta\otimes\theta')$-distinguished (similarly for $\SO_{2n+1}$). As a corollary, we
%characterize supercuspidal distinguished representations in terms of the pole of the local symmetric square $L$-function at $s=0$.
%\end{abstract}

% expand introduction
\section{Introduction}\label{section:introduction}
Let $\tau$ be an admissible representation of $\GL_n(F)$, where $F$ is a local non-Archimedean field.
Let $\theta_0$ and $\theta_0'$ be a pair of exceptional representations of a double cover $\widetilde{\GL}_n(F)$ of $GL_n(F)$, in the sense of Kazhdan and Patterson \cite{KP}.
We say that $\tau$ is distinguished if
\begin{align}\label{eq:homspace dist}
\Hom_{\GL_n(F)}(\theta_0\otimes\theta_0',\tau^{\vee})\ne0.
\end{align}
Here $\tau^{\vee}$ is the representation contragradient to $\tau$. Equivalently, the space %$\mathrm{Tri}_{\GL_n}(\tau,\theta,\theta')$
of $\GL_n(F)$-invariant trilinear forms on $\tau\times\theta_0\times\theta_0'$ is nonzero.

This space first appeared in a global context.
Let $\pi$ be a unitary cuspidal automorphic representation of $\GL_n(\Adele)$, where $\Adele$ is the
ad\`{e}les ring of a global field. Assume that $\pi$ has a trivial central character. Bump and Ginzburg \cite{BG} proved that if the partial symmetric square $L$-function
$L^S(s,\pi,\mathrm{Sym}^2)$ has a pole at $s=1$, the following period integral is nonvanishing
\begin{align}\label{int:BG period}
\int_{Z\GL_n(F)\backslash\GL_n(\Adele)}\varphi_{\pi}(g)\Theta(g)\Theta'(g)dg.
\end{align}
Here $Z$ is a subgroup of finite index in the center $C_{\GL_n}(\Adele)$ of $\GL_n(\Adele)$ ($Z=C_{\GL_n}(\Adele)$ when $n$ is odd);
$\varphi_{\pi}$ is a cusp form in the space of $\pi$; $\Theta$ and $\Theta'$ are automorphic forms in the space of a global exceptional representation of $\widetilde{\GL}_n(\Adele)$. For $n=2$, an earlier work
by Patterson and Piatetski-Shapiro \cite{PPS} showed that a similar integral characterizes the pole at $s=1$, for a global field of odd characteristic.

Periods of automorphic forms are often related to poles of $L$-functions and to questions of functoriality. Ginzburg, Jiang and Soudry \cite{GJS} described such relations in a general setup
and also considered several examples. Let $G_n=\GSpin_{2n+1}$ be the odd general spin group of rank $n+1$. Let $E(g;\rho,s)$ be the
Eisenstein series corresponding to an element $\rho$ in the space of the representation of
$G_n(\Adele)$ induced from $\tau|\det{}|^s\otimes1$ ($s\in\C$, $g\in G_n(\Adele)$). The residual representation
$E_{\pi}$ is the space spanned by the residues $E_{1/2}(\cdot;\rho)$ of $E(g;\rho,s)$ at $s=1/2$. The following result formulated in
\cite{GJS} (Theorem~3.3) was proved in a series of works (\cite{BG,BFG,GJS,me7}):
the following conditions are equivalent.
\begin{enumerate}
\item $L^S(s,\pi,\mathrm{Sym}^2)$ has a pole at $s=1$.
\item\label{item:period} The period integral \eqref{int:BG period} is nonzero.
\item\label{item:residual} The residual representation $E_{\pi}$ is nonzero.
\item $\pi$ is the Langlands functorial transfer of an irreducible generic cuspidal automorphic representation of the split $\mathrm{SO}_{n}(\Adele)$ (if $n$ is even) or $\Sp_{(n-1)/2}(\Adele)$ ($n$ is odd).
\end{enumerate}
Note that this was stated in \cite{GJS} with $G_n$ replaced by $\SO_{2n+1}$ (see below).

To prove that the nonvanishing of \eqref{int:BG period} implies the nontriviality of $E_{\pi}$, one can relate the period to the following co-period
integral
\begin{align*}
\mathcal{CP}(E_{1/2}(\cdot;\rho),\Theta,\Theta')=\int_{C_{G_{n}(\Adele)}G_{n}(F)\backslash G_{n}(\Adele)}E_{1/2}(g;\rho)\Theta(g)\Theta'(g)dg.
\end{align*}
Here $\Theta$ and $\Theta'$ belong to the exceptional, or small, representation of Bump, Friedberg and Ginzburg \cite{BFG},
whose analog for $G_n$ was described in \cite{me8}. This integral was studied in \cite{me7} for $\SO_{2n+1}$ (extended in \cite{me8} to $G_n$) and we proved
the implication $\eqref{item:period}\Rightarrow\eqref{item:residual}$. In the setting of $G_n$, one can also study the twisted symmetric square $L$-function. The Rankin-Selberg integral representation
for this function has recently been developed by Takeda \cite{Tk}.

The global unfolding of $\mathcal{CP}(E_{1/2}(\cdot;\rho),\Theta,\Theta')$ (in \cite{me7}) has a local counterpart. Assume that $\tau$ is a distinguished
representation. Let $\mathrm{I}(\tau)=\Ind_{Q_n}^{G_n}(\delta_{Q_n}^{1/2}\tau|\det{}|^{1/2}\otimes1)$, where $Q_n$ is the Siegel parabolic subgroup. Using an integral over $Q_n(F)\backslash G_n(F)$, we show (Proposition~\ref{proposition:upper heredity of dist 1})
that for a certain pair $\theta$ and $\theta'$ of exceptional representations of $\widetilde{G}_n(F)$ (\cite{BFG,me8}, see below),
\begin{align}\label{eq:homspace dist2}
\Hom_{G_n(F)}(\theta\otimes{\theta'},\mathrm{I}(\tau)^{\vee})\ne0.
\end{align}
In other words, $\mathrm{I}(\tau)$ is a distinguished representation of $G_n(F)$. In fact, depending on the central character of $\tau$, we may
need to replace $\tau|\det{}|^{1/2}\otimes1$ with $\tau|\det{}|^{1/2}\otimes\eta$ where $\eta$ is a character of $F^*$. (See the proposition for details.)
The local problem is to prove that the Langlands quotient
$\mathrm{LQ}(\mathrm{I}(\tau))$ of $\mathrm{I}(\tau)$ is also distinguished.

In the present study we consider an irreducible unitary supercuspidal $\tau$.
In this case $\mathrm{I}(\tau)$ is either irreducible and generic, or is of length two, has a unique irreducible
generic subrepresentation and $\mathrm{LQ}(\mathrm{I}(\tau))$ is non-generic.
Here is our main result, implying that if $\tau$ is distinguished, so is $\mathrm{LQ}(\mathrm{I}(\tau))$. For more details see Corollary~\ref{corollary:upper hered dist LQ}.
\begin{theorem}\label{theorem:tensor of small is non generic}
The space of $\theta\otimes\theta'$ as a representation of $G_n(F)$ does not afford a Whittaker functional.
\end{theorem}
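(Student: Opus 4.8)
The plan is to show that the twisted Jacquet module $(\theta\otimes\theta')_{U,\psi}$ vanishes, where $U$ is the maximal unipotent subgroup of $G_n$ and $\psi$ a nondegenerate character. I would first peel off the unipotent radical $V$ of the Siegel parabolic $Q_n$. Writing $U=U_{\GL_n}\ltimes V$, where $U_{\GL_n}$ is the maximal unipotent subgroup of the Levi part $\GL_n$ and $V$ is normalized by it, one has
\begin{align*}
\Hom_{U}(\theta\otimes\theta',\psi)\cong\Hom_{U_{\GL_n}}\bigl((\theta\otimes\theta')_{V,\psi_V},\psi_{U_{\GL_n}}\bigr),
\end{align*}
where $\psi_V$ is the restriction of $\psi$ to $V$. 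Since $\psi$ is nontrivial only on the simple root subgroups, and among the roots of $V$ only the short simple root lies in $V$, the character $\psi_V$ is trivial on $[V,V]$ and nontrivial on a single root subgroup; in particular it is a ``rank one'' character of the abelianization $V/[V,V]\cong F^n$. The first genuine step is therefore to describe $(\theta\otimes\theta')_{V,\psi_V}$ as a module for $\mathrm{Stab}_{\GL_n}(\psi_V)$, a mirabolic-type subgroup $\GL_{n-1}\ltimes F^{n-1}$.

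For this I would invoke the structure of the small representation from \cite{BFG,me8}: the Jacquet module of $\theta$ along $V$ is a twist of the exceptional representation of $\cover{\GL}_n$, and, more importantly, $\theta$ is attached to a small nilpotent orbit, so that its twisted Jacquet modules along unipotent radicals vanish outside a small ``support set'' of characters and the center $[V,V]$ acts essentially trivially on $\theta$. This should let me replace the non-abelian group $V$ by its abelianization and run Fourier analysis on $V/[V,V]\cong F^n$: decomposing over factorizations $\psi_V=\psi_V^{(1)}\psi_V^{(2)}$, the module $(\theta\otimes\theta')_{V,\psi_V}$ is assembled from the pieces $\theta_{V,\psi_V^{(1)}}\otimes\theta'_{V,\psi_V^{(2)}}$, each of which is nonzero only when both $\psi_V^{(i)}$ lie in the small support set, and the surviving pieces would be identified with (twists of) objects of the same type in lower rank — the small representations of $\cover{G}_{n-1}$ together with the exceptional representations of the covers $\cover{\GL}_m$, $m<n$, extended to the appropriate mirabolic.

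I would then continue up through $U_{\GL_n}$ in the same fashion, organizing the whole computation as a descending induction on $n$: at each stage a would-be Whittaker functional on the ``tensor of smalls'' for $G_n$ is reduced to one for $G_{n-1}$, plus Whittaker-type functionals on tensors of exceptional representations of smaller covers of $\GL$, which are controlled as in the $\GL$ theory underlying \cite{BG,KP}; the base cases of small rank are a direct check. The main obstacle is precisely the middle step: tensor product does not commute with Jacquet functors, so everything hinges on first extracting enough vanishing of the action of $[V,V]$ (and of the higher, non-simple root subgroups) on $\theta$ to bring the problem into the range of abelian Fourier analysis, and then on the bookkeeping that shows the ``sum'' of two copies of the small support set never reaches the regular orbit — equivalently, that in every term that survives the filtration at least one factor is forced into a degenerate Jacquet module that annihilates the functional.
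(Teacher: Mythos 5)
Your skeleton — peel off the unipotent radical $U_n$ of the Siegel parabolic, run Mackey theory over orbits of characters, and reduce rank — is the right one, and it matches the paper's opening moves. But there is a genuine gap at exactly the point you flag as "the main obstacle," and your proposed resolution of it fails. Since $U_n$ is a Heisenberg-type group with center $C=C_{U_n}$, the coinvariants $(\theta\otimes\theta')_{C}$ are glued from inductions of $\theta_{C,\mu}\otimes\theta'_{C,\mu^{-1}}$ as $\mu$ runs over the $\GL_n$-orbits of characters of $C$; you cannot "replace $V$ by its abelianization," because the terms with $\mu$ nontrivial do not die. On the contrary, for $n$ even the generic orbit $\mu=\psi_{n/2}$ is the crux: $\theta_{C,\psi_{n/2}}$ is a direct sum of copies of the Weil representation $\omega_{\psi}$ of the Jacobi group $\widetilde{\Sp}_{n/2}\ltimes H_n$ (Theorem~\ref{theorem:small rep is weakly minimal}, itself a substantial input requiring the classification of smooth Jacobi representations and the vanishing of long-length Jacquet modules along $U_1$). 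So your hoped-for mechanism — that "in every term that survives the filtration at least one factor is forced into a degenerate Jacquet module" — is false for this term: both factors survive, and their tensor $\omega_{\psi}\otimes\omega_{\psi^{-1}}$ has a nonzero (one-dimensional) $H_n$-coinvariant on which $\Sp_{n/2}$ acts trivially.

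Killing the Whittaker functional on this surviving piece requires an input of a completely different nature, absent from your plan: after inducing up to $Q_n$ and taking $N_n$-coinvariants one is left with (a twisted form of) $\ind_{\Sp_{n/2}}^{\GL_n}1$, and one must know that an irreducible generic representation of $\GL_n$ admits no symplectic model — equivalently, that $\psi|_{\rconj{x^{-1}}\Sp_{n/2}\cap N_{\GL_n}}\ne1$ for every $x\in\GL_n$. This is the Offen--Sayag result on Klyachko models, applied in the paper via Bernstein--Zelevinsky's theory of $\ell$-sheaves and invariant distributions. The intermediate orbits $0<k<n/2$ then reduce (via Proposition~\ref{proposition:Jacquet module C and k} and the geometric lemma) to the maximal orbit in rank $2k$ tensored with a generic-character Jacquet module of exceptional representations of $\widetilde{\GL}_{n-2k}$ — not to a Whittaker functional on the tensor of smalls for $G_{n-1}$, so the induction you describe is also structured differently. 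Without the Jacobi/Weil analysis and the symplectic-period input, the proof does not close.
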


%The main motivation for our results is related to a conjecture of Lapid and Mao on
%Whittaker-Fourier coefficients \cite{LM6}. One of the ingredients is encapsulated
A similar ``inflation" phenomena has already been observed by Ginzburg, Rallis and Soudry \cite{GRS5} (Theorem~2). Assume that $\tau$ is a supercuspidal and self-dual representation, such that the exterior square $L$-function $L(s,\tau,\wedge^2)$ has
a pole at $s=0$. This implies that $\tau$ has a Shalika model and then according to Jacquet and Rallis \cite{JR2}, $\tau$ admits a (nontrivial) $\GL_n\times\GL_n$ invariant functional.
In turn, the representation %$\mathcal{I}(\tau)$
parabolically induced from $\tau|\det|^{1/2}$ to $\Sp_{2n}$ has an $\Sp_n\times\Sp_n$ invariant functional.
Ginzburg, Rallis and Soudry \cite{GRS7} (Theorems~16-17) showed that an irreducible generic representation of $\Sp_{2n}$ does not admit such
a functional. It follows that the $\Sp_n\times\Sp_n$ functional factors through the Langlands quotient.

This inflation was one of the ingredients used
by Lapid and Mao for the proof of their conjecture on
Whittaker-Fourier coefficients, in the case of the metaplectic group (\cite{LM3,LM6,LM4,LM5}). Note that their conjecture actually applies
to any quasi-split group as well as the metaplectic group.
Our results here and their extension, in a forthcoming work, to an arbitrary irreducible generic distinguished representation,
are expected to be used in a proof of this conjecture for even orthogonal groups.

Here we have a similar relation between distinguished representations and the symmetric square $L$-function.
%, as well, the presence of a pole of
%$L(s,\tau,\mathrm{Sym}^2)$ at $s=0$ (for a supercuspidal $\tau$) implies, by virtue of the descent method %(see e.g. \cite{GRS2,RGS})
%(see the appendix of \cite{PR}) and \eqref{int:BG period}, that $\tau$ is distinguished.
\begin{theorem}\label{theorem:supercuspidal dist GLn and pole}
Let $\tau$ be an irreducible unitary supercuspidal representation of $\GL_n$. Then $\tau$ is
distinguished if and only if $L(s,\tau,\mathrm{Sym}^2)$ has a pole at $s=0$.
\end{theorem}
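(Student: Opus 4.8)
### Proof proposal

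The plan is to reduce Theorem~\ref{theorem:supercuspidal dist GLn and pole} to the local zeta integral machinery attached to the symmetric square $L$-function, together with the hereditary results already established in the paper. The starting point is the known analytic relation between the pole of $L(s,\tau,\mathrm{Sym}^2)$ at $s=0$ and the reducibility point of $\mathrm{I}(\tau)=\Ind_{Q_n}^{G_n}(\delta_{Q_n}^{1/2}\tau|\det{}|^{1/2}\otimes1)$ (equivalently, the image of the standard intertwining operator at $s=1/2$): for an irreducible unitary supercuspidal $\tau$, the induced representation $\mathrm{I}(\tau)$ is \emph{reducible} of length two precisely when $L(s,\tau,\mathrm{Sym}^2)$ has a pole at $s=0$, and is irreducible otherwise. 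This is the $\GSpin$-analog of the Shahidi–Goldberg reducibility criterion; in the $\SO_{2n+1}$ setting it is classical, and the transfer to $G_n=\GSpin_{2n+1}$ is routine via the description of representations of $\GSpin$ in terms of those of $\SO_{2n+1}$ twisted by a central character. So the theorem becomes: $\tau$ is distinguished $\iff$ $\mathrm{I}(\tau)$ is reducible.

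For the direction ``reducible $\Rightarrow$ distinguished'', I would argue as follows. When $\mathrm{I}(\tau)$ is reducible, it has a unique irreducible generic subrepresentation and $\mathrm{LQ}(\mathrm{I}(\tau))$ is the non-generic Langlands quotient, as recalled in the excerpt. One then needs the converse flavor of the hereditary statement: if $\mathrm{I}(\tau)$ is distinguished then $\tau$ is distinguished. This should follow by a Jacquet-module / Mackey-theory computation — restricting the trilinear period realizing \eqref{eq:homspace dist2} to the open $Q_n(F)$-orbit and using the known Jacquet modules of the exceptional representations $\theta,\theta'$ of $\widetilde{G}_n(F)$ along the Siegel unipotent, which by \cite{BFG,me8} are built out of exceptional representations of $\widetilde{\GL}_n(F)$ — so that a nonzero $G_n$-invariant form on $\theta\otimes\theta'\otimes\mathrm{I}(\tau)$ produces a nonzero $\GL_n$-invariant form on $\theta_0\otimes\theta_0'\otimes\tau$. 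But I still need to know that \emph{some} distinguished representation with the central character of $\tau$ exists to seed the induction; here Theorem~\ref{theorem:tensor of small is non generic} does the crucial work. Indeed, when $\mathrm{I}(\tau)$ is reducible, Proposition~\ref{proposition:upper heredity of dist 1} would be applied not to $\tau$ but in the reverse direction: one shows that the exceptional period on $\theta\otimes\theta'$ restricted to the whole of $G_n(F)$ is nonzero on $\mathrm{I}(\tau)$ forces — via the non-Whittaker property of $\theta\otimes\theta'$ from Theorem~\ref{theorem:tensor of small is non generic} — the period to factor through the non-generic constituent $\mathrm{LQ}(\mathrm{I}(\tau))$, whence $\mathrm{LQ}(\mathrm{I}(\tau))$ is distinguished; and then unfolding on $Q_n(F)\backslash G_n(F)$ recovers distinction of $\tau$ itself. (Concretely: distinction of $\tau$ and reducibility of $\mathrm{I}(\tau)$ are tied together because the co-period/zeta integral realizing the pole is, after unfolding, exactly the trilinear form \eqref{eq:homspace dist}.)

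For ``distinguished $\Rightarrow$ reducible'', the cleanest route is Proposition~\ref{proposition:upper heredity of dist 1}: if $\tau$ is distinguished then $\mathrm{I}(\tau)$ is distinguished, i.e.\ \eqref{eq:homspace dist2} holds, possibly after twisting $\tau|\det{}|^{1/2}\otimes1$ by a character $\eta$ of $F^*$ — this twist is harmless since it does not affect reducibility or the symmetric-square pole, both of which depend only on $\tau$. Now suppose, for contradiction, that $\mathrm{I}(\tau)$ is irreducible; being induced from a unitary character twist at the edge $s=1/2$ of the complementary series it is then irreducible \emph{and generic}. But Theorem~\ref{theorem:tensor of small is non generic} says $\theta\otimes\theta'$ affords no Whittaker functional, and by a standard argument (Rodier's hereditary property for Whittaker models along the Siegel parabolic, combined with the genericity of $\mathrm{I}(\tau)$) a nonzero element of $\Hom_{G_n(F)}(\theta\otimes\theta',\mathrm{I}(\tau)^{\vee})$ together with a Whittaker functional on $\mathrm{I}(\tau)$ would manufacture a Whittaker functional on $\theta\otimes\theta'$ — contradiction. (More precisely: one pairs the distinction functional with the Whittaker model of $\mathrm{I}(\tau)$ through the Jacquet-module computation of $\theta\otimes\theta'$ along the Siegel unipotent radical, exactly as in the proof of Theorem~\ref{theorem:tensor of small is non generic}, to reach the contradiction.) Hence $\mathrm{I}(\tau)$ must be reducible, completing the equivalence.

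The main obstacle I anticipate is making the passage between \emph{distinction of $\tau$} and \emph{reducibility of $\mathrm{I}(\tau)$} fully rigorous on the ``distinguished $\Rightarrow$ reducible'' side: one has to be careful that the distinction functional on $\mathrm{I}(\tau)$ is genuinely incompatible with the Whittaker functional when $\mathrm{I}(\tau)$ is irreducible generic, and this incompatibility is precisely what Theorem~\ref{theorem:tensor of small is non generic} is designed to give, but threading it through the induced model — rather than at the level of the small representations on $G_n$ directly — requires the Jacquet-module analysis of $\theta,\theta'$ along $Q_n$ to be sharp enough to see that no Whittaker term survives. A secondary technical point is bookkeeping the central-character twist $\eta$ and the finite-index subgroup $Z$ appearing for even $n$, so that the normalization $\tau|\det{}|^{1/2}\otimes1$ vs.\ $\tau|\det{}|^{1/2}\otimes\eta$ matches on both sides of the equivalence; but this is of the ``depends on the central character of $\tau$'' type already flagged in the excerpt and should not affect the logical structure.
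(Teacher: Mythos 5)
Your argument for the direction ``distinguished $\Rightarrow$ pole'' is essentially the paper's: apply Proposition~\ref{proposition:upper heredity of dist 1} to make $\mathrm{I}(\tau,1/2,1)$ distinguished, observe via Theorem~\ref{theorem:tensor of small is non generic} that a distinguished $\mathrm{I}(\tau,1/2,1)$ cannot be irreducible (else it would be generic and would transport a Whittaker functional to $\theta\otimes\theta'$), and then invoke Casselman--Shahidi (their Conjecture~1.1 being known for $G_n$ by Asgari) to convert reducibility at the point $1/2$ into the pole of $L(s,\tau,\mathrm{Sym}^2)$ at $s=0$. This half is fine.

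The converse direction has a genuine gap. From the pole you correctly obtain that $\mathrm{I}(\tau,1/2,1)$ is reducible, but before any descent to $\tau$ can begin you need a \emph{nonzero} element of $\Hom_{G_n}(\theta\otimes\theta',\mathrm{I}(\tau,1/2,1)^{\vee})$, and nothing in your outline produces one. Theorem~\ref{theorem:tensor of small is non generic} is a vanishing statement: it says that \emph{if} a distinction functional on $\mathrm{I}(\tau,1/2,1)$ exists, then it must factor through the non-generic Langlands quotient; it cannot conjure such a functional out of reducibility alone. Your phrase ``the exceptional period \dots\ is nonzero on $\mathrm{I}(\tau)$ forces \dots'' assumes exactly the nonvanishing that has to be proved, so the argument is circular. (The descent step itself, from distinction of $\mathrm{I}(\tau,1/2,\eta)$ to distinction of $\tau$, is available from Proposition~\ref{proposition:one-dim} for tempered $\tau$; the missing ingredient sits upstream of it.) The paper closes this direction by a global argument instead: using the descent method one globalizes $\tau$ to a cuspidal automorphic representation $\pi$ of $\GL_n(\Adele)$ whose partial symmetric square $L$-function has a pole at $s=1$ (appendix of Prasad--Ramakrishnan); the Bump--Ginzburg theory then gives the nonvanishing of the period integral \eqref{int:BG period}, and their Theorem~7.6 extracts from this nonvanishing global period the local distinction of the component $\tau$. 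Some such global (or local zeta-integral) input is indispensable here; the soft local considerations you describe do not manufacture invariant trilinear forms.
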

Refer to Shahidi \cite{Sh5} (Theorem~6.2) for a description of the poles of $L(s,\tau,\mathrm{Sym}^2)$ in this setting.

The proof of Theorem~\ref{theorem:tensor of small is non generic} is essentially the local analog of the global computation of the co-period
in \cite{me7,me8}. Write $Q_n=M_n\ltimes U_n$ and let $C=C_{U_n}$ be the center of the unipotent radical $U_n$. The global unfolding argument involves a Fourier expansion
of $\Theta$ along $C$. Consider a non-generic character of $C$, this means that its stabilizer in $M_n$ contains a unipotent radical $V$ of a parabolic subgroup of $\GL_n$. The corresponding Fourier coefficient
is constant on $V$, then the cuspidality of $\pi$ is used to prove that the integral vanishes.

When $n$ is odd (and $n>1$), all characters of $C$ are non-generic. In the even case there is one generic orbit of characters. Its stabilizer
is ``almost" a Jacobi group, its reductive part is $Sp_{n/2}\times G_0$, where $Sp_{n/2}$ is a symplectic group in $n$ variables. One might attempt
to prove invariance of the product of Fourier coefficients under $Sp_{n/2}(\Adele)$, then use the fact that $\pi$ does not admit nontrivial
symplectic periods (\cite{JR}). Albeit the Fourier coefficients do not enjoy this invariance, a certain convolution against Weil theta
functions, introduced by Ideka \cite{Ik3}, can be used instead. %Ideka \cite{Ik3} studied automorphic functions on Jacobi groups
%and presented (\cite{Ik3} Proposition~1.3) a dense subset of functions that can be used to
%replace the Fourier coefficient.

%maximal Heisenberg parabolic subgroup $Q_n$ (the Siegel parabolic). Then

%The key property of the exceptional
%representations $\theta$ is the vanishing of a large class of twisted Jacquet modules.
The local argument involves the computation of the twisted Jacquet modules $\theta_{C,\psi_k}$ of $\theta$ with respect to $C$ and
a representative $\psi_k$ of some orbit of characters. In contrast with the global setting, the generic character is the
crux of the proof. Roughly, this is because
$\theta_{C,\psi_k}$ is a tensor of an exceptional representation of $\widetilde{\GL}_k(F)$ and the Jacquet module of an exceptional representation of $\widetilde{G}_{2k}(F)$ along $C_{U_{2k}}$ and a generic character.

When the character is generic, the twisted Jacquet module is (by restriction) a representation of a Jacobi group.
The local theory of smooth representations of Jacobi groups \cite{Dijk,MVW,BSch} describes such a representation as a tensor $\kappa\otimes\omega_{\psi}$,
where $\omega_{\psi}$ is the Weil representation. The Heisenberg group acts trivially on the space of $\kappa$, while the action of the reductive part separates into
an action on the space of $\kappa$, and one on the space of $\omega_{\psi}$. We prove that $\kappa$ is a trivial representation of $Sp_{n/2}(F)$.
\begin{theorem}\label{theorem:small rep is weakly minimal}
Assume $n$ is even and let $\psi_{n/2}$ be a generic character of $C_{U_n}$. As a Jacobi representation $\theta_{C_{U_n},\psi_{n/2}}$ is the direct sum of (possibly infinitely many) copies of $\omega_{\psi}$.
\end{theorem}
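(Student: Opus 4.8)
The plan is to compute the twisted Jacquet module $\theta_{C_{U_n},\psi_{n/2}}$ explicitly and identify it, as a representation of the Jacobi group that is the stabilizer of $\psi_{n/2}$, with a direct sum of copies of the Weil representation $\omega_\psi$. First I would set up the geometry: write $Q_n = M_n \ltimes U_n$ with Levi $M_n \cong \GL_n \times \GL_1$ (the general-spin aspect), let $C = C_{U_n}$ be the center of $U_n$, and pin down the stabilizer in $M_n$ of the generic character $\psi_{n/2}$ of $C$; by the discussion in the introduction its reductive part is $\Sp_{n/2}(F) \times G_0$, and the full stabilizer is (almost) a Jacobi group $\mathcal{J}$, an extension of $\Sp_{n/2}(F) \times G_0$ by a Heisenberg group $\mathcal{H}$ realized inside $U_n / C$. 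The structural input from the local theory of Jacobi representations \cite{Dijk,MVW,BSch} then tells us that any smooth representation of $\mathcal{J}$ on which the center of $\mathcal{H}$ acts by $\psi$ is of the form $\kappa \boxtimes \omega_\psi$; so the entire theorem reduces to proving that the ``$\kappa$-part'' is trivial as a representation of $\Sp_{n/2}(F)$.

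The key steps, in order: (1) Recall/establish the realization of $\theta$ (the small representation of $\widetilde{G}_n$ of \cite{BFG,me8}) as a quotient of an induced representation from a maximal parabolic, so that its Jacquet modules can be analyzed via the geometric lemma; (2) compute the Jacquet module of $\theta$ along $U_n$ as a representation of $\widetilde{M}_n$, tracking the metaplectic cocycle — this is where the ``exceptional representation of $\widetilde{\GL}_n$'' tensored with ``Jacquet module of the exceptional representation along $C_{U_{2k}}$'' structure mentioned in the introduction appears; (3) twist by $\psi_{n/2}$ and restrict to the Jacobi group, extracting $\kappa$; (4) show $\kappa|_{\Sp_{n/2}(F)}$ is trivial. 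For step (4), the engine should be the fact that $\theta$ is a \emph{small} (in the sense of having a highly degenerate wavefront set / minimal-type) representation: its non-trivial twisted Jacquet modules along unipotent radicals vanish except for very special characters, which forces the $\Sp_{n/2}(F)$-action on $\kappa$ to have no non-trivial unipotent Jacquet modules at all, hence $\kappa$ is a sum of copies of the trivial representation. Concretely I would intersect $\Sp_{n/2}(F)$ with suitable unipotent subgroups of $G_n$ sitting inside $U_n$ or its neighbors, and use the known vanishing of the corresponding Jacquet modules of $\theta$ (the analog for $\widetilde{G}_n$ of the Kazhdan--Patterson/Bump--Friedberg--Ginzburg computations) to kill every Jacquet module of $\kappa$ along a non-trivial unipotent subgroup of $\Sp_{n/2}$; a representation all of whose such Jacquet modules vanish is trivial.

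The main obstacle I anticipate is step (2)–(3): carrying the computation of the Jacquet module of $\theta$ through the metaplectic cover and through the non-abelian quotient $U_n/C$ without losing track of the Heisenberg action, so that the clean tensor decomposition $\kappa \boxtimes \omega_\psi$ genuinely emerges rather than some twisted or extension-type object. A secondary subtlety is that $G_n = \GSpin_{2n+1}$ rather than $\SO_{2n+1}$, so the similitude factor and the precise form of $G_0$ and the central torus must be handled carefully when identifying the stabilizer and when invoking the results of \cite{me8}; I expect this to be bookkeeping rather than a real difficulty, but it is where sign/cocycle errors would hide. Once the decomposition is in hand, the triviality of $\kappa$ on $\Sp_{n/2}(F)$ should follow formally from the smallness of $\theta$, and the ``possibly infinitely many copies'' in the statement simply reflects that no finiteness (admissibility in the relevant direction) is being claimed for the multiplicity space $\kappa$.
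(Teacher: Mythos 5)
Your overall architecture coincides with the paper's: pass to the Jacobi group via the epimorphism onto $\widetilde{\Sp}_{n/2}\ltimes H_n$, invoke the smooth Jacobi theory of \cite{Dijk,MVW,BSch} to write $\theta_{C_{U_n},\psi_{n/2}}\cong\kappa\otimes\omega_{\psi}$, and then prove $\kappa$ is trivial by combining the generation of $\Sp_{n/2}$ by conjugates of a one-parameter unipotent subgroup $Y$ with the vanishing of twisted Jacquet modules of $\theta$ along $U_1$ for characters of nonzero length (Theorem~\ref{theorem:main vanishing Jacquet of small rep}). (Your steps (1)--(2), computing $\theta_{U_n}$ via the geometric lemma, are not actually used for this theorem; the paper works directly with $\theta_{C,\psi_{n/2}}$.)

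The genuine gap is in your step (4), exactly where you assert that the smallness of $\theta$ ``forces the $\Sp_{n/2}(F)$-action on $\kappa$ to have no non-trivial unipotent Jacquet modules at all.'' What the vanishing theorems give you is that twisted Jacquet modules of the \emph{tensor product} $\kappa\otimes\omega_{\psi}$ vanish; but $Y$ acts nontrivially on the factor $\omega_{\psi}$ as well, by the quadratic phase \eqref{eq:Weil Y action}, and a twisted Jacquet module along a group acting diagonally and nontrivially on both factors does not factor through the Jacquet module of either factor. So the implication from $(\kappa\otimes\omega_{\psi})_{Y,\psi(\alpha\cdot)}=0$ to $\kappa_{Y,\psi(\alpha\cdot)}=0$ is not formal, and it is precisely the crux of the proof. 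The paper resolves it in two moves. First, $Y$ lies in the Levi $M_n$, so Theorem~\ref{theorem:main vanishing Jacquet of small rep} does not apply to it directly; one augments $Y$ by a one-parameter subgroup of the Heisenberg part of $U_n$ so that the resulting group $U$ (together with $C\cap U_1$) sits inside $U_1$ and every extension of the relevant character to $U_1$ has nonzero length --- the extra Heisenberg direction is needed to pin the self-paired middle coordinate to zero. Second, and more importantly, one uses the one-dimensionality of $(\omega_{\psi})_{R}$ and $(\omega_{\psi})_{RU}$ (Claim~\ref{claim:Jacquet of Weil 1 dim}) to choose a test vector $\varphi$ whose averages over arbitrarily large compacta of $Y\cdot R$ never vanish, decomposes $\omega_{\psi}(y_1^{-1})\varphi$ as a scalar multiple of $\varphi$ plus an element of $\omega_{\psi}(R)$, and runs a careful order-of-limits argument (\eqref{eq:part 00}--\eqref{eq:part 3}) to transfer the vanishing to the $\kappa$ factor alone. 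Without a mechanism of this kind your plan stalls at the passage from ``twisted Jacquet modules of $\theta$ vanish'' to ``twisted Jacquet modules of $\kappa$ vanish.''
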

Note that the action of the $G_0$ part of the stabilizer on $\theta_{C_{U_n},\psi_{n/2}}$ is given simply by the central character of $\theta$.

This result underlies Theorem~\ref{theorem:tensor of small is non generic}. Furthermore, it implies the following multiplicity property.
Assume that $\tau$ is irreducible and tempered. We prove that $\tau$ is distinguished if and only if $\mathrm{I}(\tau)$ is.
Moreover, the dimensions of \eqref{eq:homspace dist2} and \eqref{eq:homspace dist} are equal. See Proposition~\ref{proposition:one-dim}
for the precise statement. Note that Kable \cite{Kable} conjectured, and under a certain homogeneity assumption
proved (\cite{Kable} Corollary~6.1), that \eqref{eq:homspace dist} enjoys multiplicity one. These results motivate the introduction of ``exceptional models" over $\GL_n$ and $G_n$.

Theorem~\ref{theorem:small rep is weakly minimal} may have additional applications. Explicit descriptions of Jacquet modules of exceptional representations have had numerous applications (see below).

Note that when $n=2$, $Q_n$ is the Heisenberg parabolic subgroup in the notation of Gan and Savin \cite{GanSavin}. In their terminology, Theorem~\ref{theorem:small rep is weakly minimal}
shows that $\theta$ is ``weakly minimal". In this case it is the minimal representation and $\theta_{C_{U_n},\psi_{n/2}}\isomorphic\omega_{\psi}$ (\cite{GanSavin} Section~3).

Bump, Friedberg and Ginzburg \cite{BFG} constructed the small representation $\theta^{\SO_{2n+1}}$ for the special odd orthogonal group. This is a representation of
a ``double cover" $\widetilde{SO}_{2n+1}(F)$, obtained by restricting the $4$-fold
cover of $\SL_{2\glnidx+1}(F)$ of Matsumoto \cite{Mats}. For the low rank cases $n=2,3$, it is the minimal representation. In fact for $n=3$,
this representation was already developed by Roskies \cite{Roskies}, Sabourin \cite{Sabourin} and Torasso \cite{Torasso}.
For $n>3$, there is no minimal representation for a group of type $B_n$ (\cite{V}).

%Nonetheless, Bump, Friedberg and Ginzburg \cite{BFG2} proved numerous
%results on $\theta^{SO_{2n+1}}$. They showed its inductive structure, namely, that Jacquet modules of along standard unipotent radicals yield
%tensors of exceptional representations. Furthermore,
Bump, Friedberg and Ginzburg \cite{BFG2} showed that when $n>3$, $\theta^{\SO_{2n+1}}$ is attached to one of the
possible coadjoint orbits, which is smallest next to the minimal one. This translates into the vanishing of a large class
of Fourier coefficients, called generic in \cite{BFG} (see also \cite{CM,Cr,G2}). Locally, this means that a large class
of twisted Jacquet modules vanish. The representation $\theta^{\SO_{2n+1}}$ was used by these authors to construct a lift with certain
functorial properties, between covers of orthogonal groups \cite{BFG2}. The representation $\theta^{SO_{7}}$ was used to construct an integral representation
(\cite{BFG3}).

There is a technical issue when working with $\widetilde{SO}_{2n+1}(F)$: the underlying field must contain all $4$ $4$-th roots of unity.
This can be remedied using $G_n$. Indeed, one obtains a nontrivial double cover of $G_n(F)$ by restricting the $2$-fold cover of $Spin_{2n+3}(F)$
of Matsumoto \cite{Mats}. The theory of Bump, Friedberg and Ginzburg \cite{BFG} can be extended to $\widetilde{G}_n(F)$, mainly because
both groups are of type $B_n$ and in particular, the unipotent subgroups are isomorphic. The details were carried out in \cite{me8}.
Our results here are stated for $G_n$, but apply similarly to $\SO_{2n+1}$ and $\theta^{\SO_{2n+1}}$ (see Section~\ref{section:relevance to SO(2n+1)}).

Minimal representations have been studied and used extensively, by numerous authors.
Knowledge of Fourier coefficients, or Jacquet modules, has proved very useful for applications
\cite{GRS6,G2,GJS2}. They have played a fundamental role in the theta correspondence, the descent method and Rankin-Selberg integrals.
See, for example \cite{V,KZ,KS,Pd,Savin2,BK,Savin,GRS,BFG3,GRS3,KP3,BFG,JSd1,GanSavin,Soudry4,LokeSavin,LokeSavin2,RGS}.

%Our results have some overlap with an ongoing work of Loke and Savin\footnote{private communication.}, we thank
%them for informing us about their work. They and the author were working independently. The approach and techniques
%are different. For example, they do not use a cocycle.

We mention that Bump and Ginzburg \cite{BG} also developed a local theory, where they considered a similar space of equivariant trilinear forms, except that
$\theta_0'$ was replaced by a certain induced representation. %a representation parabolically induced, from an exceptional representation of $\widetilde{\GL}_{n-1}(F)$ and a modulus character depending on a complex parameter.
For $n=3$, Savin \cite{Savin3} determined the dimension
of \eqref{eq:homspace dist} for an arbitrary irreducible quotient of a principal series representation. He also conjectured (and proved for $n=3$),
that the class of distinguished spherical representations of $\GL_n(F)$ is precisely those representations, which are lifts from
a certain prescribed classical group. Kable \cite{Kable2} proved that these lifts are distinguished, the other
direction was proved in \cite{me9}.

The group $G_n$ has been the focus of study of a few recent works, among which are the works of
Asgari \cite{Asg2,Asg} on local $L$-functions, Asgari and Shahidi \cite{AsgSha,AsgSha2} on functoriality
and Hundley and Sayag \cite{HS} on the descent construction.

The rest of this work is organized as follows. In Section~\ref{section:preliminaries} we provide notation and definitions. In particular we
describe the construction of exceptional representations. Section~\ref{section:Heisenberg jacquet modules} contains the proof of
Theorem~\ref{theorem:small rep is weakly minimal}. Theorems~\ref{theorem:tensor of small is non generic}
and \ref{theorem:supercuspidal dist GLn and pole} are proved in Section~\ref{section:Distinguished representations}.
Section~\ref{section:relevance to SO(2n+1)} provides the formulation of our results for $\SO_{2n+1}$.

% because the constant term of the series is a sum of a section and the image of the section under intertwining operator
% and locally the resiue of the operator is the Langlands Quotient
%of $\mathrm{Tri}_{\GL_n}(\tau,\theta,\theta')$
\subsection*{Acknowledgments}
I wish to express my gratitude to Erez Lapid for suggesting this project to me. I would like to thank Eitan Sayag, for explaining
to me how to use his results with Offen (\cite{OS}, Proposition~1). % to complete the proof of
%Proposition~\ref{proposition:one-dim}. %Theorem~\ref{theorem:tensor of small is non generic}.
Lastly, I thank Jim Cogdell for his kind encouragement and useful remarks.

\section{Preliminaries}\label{section:preliminaries}

\subsection{General notation}\label{subsection:general notation}
Let $F$ be a local \nonarchimedean\ field of characteristic different from $2$. % and a residual characteristic $q$. %different than $2$.
For an integer $r\geq1$, let $\mu_r$ be the subgroup of $r$-th roots of unity in $F$. Set $F^{*r}=(F^*)^r$.
We usually fix a nontrivial additive character $\psi$ of $F$. Then the normalized Weil factor (\cite{We} Section~14)
is denoted by $\gamma_{\psi}$ ($\gamma_{\psi}(a)$ is
$\gamma_F(a,\psi)$ of \cite{Rao}, $\gamma_{\psi}(\cdot)^4=1$). The Hilbert symbol of order $r$ is $(,)_r$.
If $G$ is a group, $C_G$ denotes its center. For $x,y\in G$ and $Y<G$, $\rconj{x}y=xyx^{-1}$ and
$\rconj{x}Y=\setof{\rconj{x}y}{y\in Y}$. Hereby we omit references to the field, e.g., $\GL_n=\GL_n(F)$.

\subsection{The group $\GL_{n}$ and its cover}\label{subsection:GL_n and its cover}
Fix the Borel subgroup $B_{\GL_n}=T_{\GL_n}\ltimes N_{\GL_n}$ of upper triangular matrices, where $T_{\GL_n}$ is the diagonal torus. 
For any $k_1,k_2\geq0$ such that $k_1+k_2=n$, denote by $P_{k_1,k_2}$ %=Z_{k_1,k_2}\ltimes M_{k_1,k_2}$ 
the maximal parabolic subgroup whose Levi part is isomorphic to $\GL_{k_1}\times\GL_{k_2}$.  
Its unipotent radical is $Z_{k_1,k_2}=\{\left(\begin{smallmatrix}I_{k_1}&z\\&I_{k_2}\end{smallmatrix}\right)\}$. The ``mirabolic" subgroup 
$P_{n-1,1}^{\circ}$ is the subgroup of $\GL_n$ of matrices with the last row $(0,\ldots,0,1)$. 
Let $I_n$ be the identity matrix of $\GL_n$ and $J_{n}$ be the matrix with $1$ on the anti-diagonal and $0$ elsewhere. For $g\in\GL_{n}$, $\transpose{g}$ is the transpose of $g$.

We will use the metaplectic double cover $\widetilde{\GL}_n$ of $\GL_n$, as constructed by
Kazhdan and Patterson \cite{KP}.
Let $\widetilde{SL}_{n+1}$ be the double cover of $\mathrm{SL}_{n+1}$ of Matsumoto \cite{Mats} and let
$\sigma_{\mathrm{SL}_{n+1}}$ be the corresponding cocycle of Banks, Levi and Sepanski \cite{BLS} (Section~3).
We define a $2$-cocycle $\sigma_{\GL_n}$ of $\GL_{n}$ via
\begin{align*}
\sigma_{\GL_n}(a,a')=\sigma_{\mathrm{SL}_{n+1}}(\mathrm{diag}(\det{a}^{-1},a),\mathrm{diag}(\det{a'}^{-1},a')).
\end{align*}
This cocycle is related to the cocycle $\sigma_{n}$ of $\GL_n$ defined in \cite{BLS} by
\begin{align*}
\sigma_{\GL_n}(a,a')=c(\det{a},\det{a'})\sigma_{n}(a',a).
\end{align*}
In particular $\sigma_{\GL_1}(a,a')=(a,a')_2$.

\subsection{The group $\GSpin_{2n+1}$}\label{subsection:GSpin}
We start by defining the special odd orthogonal group
\begin{align*}
\SO_{2n+1}=\setof{g\in \SL_{2n+1}}{\transpose{g}J_{2n+1}g=J_{2n+1}}.
\end{align*}
Select its Borel subgroup $B_{\SO_{2n+1}}=B_{\GL_{2n+1}}\cap \SO_{2n+1}$.

Let $\Spin_{2n+1}$ be the simple split simply-connected algebraic group of type $B_{n}$. It is the algebraic double cover of $\SO_{2n+1}$. We will take the Borel subgroup, which is the preimage of $B_{\SO_{2n+1}}$. The set of simple roots of $\Spin_{2n+1}$ is
$\Delta_{\Spin_{2n+1}}=\setof{\alpha_i}{1\leq i\leq n}$, where $\alpha_i=\epsilon_i-\epsilon_{i+1}$ for $1\leq i\leq n-1$ and $\alpha_{n}=\epsilon_{n}$.

The group $G_n=\GSpin_{2n+1}$ is an $F$-split connected reductive algebraic group, which can be defined using a based root
datum as in \cite{Asg,AsgSha,HS}. It is also embedded in $G'_{n+1}=\Spin_{2n+3}$ as the Levi part of the parabolic subgroup
corresponding to $\Delta_{G'_{n+1}}\setdifference\{{\alpha_1}\}$ (see \cite{Mt}). We adapt this identification, which
is more natural for the purpose of cover groups, because we will obtain a cover of $G_n$ by restricting
the cover of $G'_{n+1}$.

Let $\delta_{Q}$ be the modulus character of a parabolic subgroup $Q<G_n$.

In the degenerate case $G_0=\GL_{1}$.

The Borel subgroup of $G_n$ is denoted $B_n=T_{n+1}\ltimes N_{n}$, where
$N_{n}$ is the unipotent radical (the rank of the torus is $n+1$). For $0\leq k\leq n$, denote by $Q_k=M_k\ltimes U_k$ the standard maximal parabolic subgroup of $G_{n}$ with $M_k\cong\GL_{k}\times G_{n-k}$. This isomorphism is not canonical. We describe the choice used in \cite{me8}, which is convenient for certain computations (see below).

The derived group $\SL_k$ of $\GL_{k}$ is generated by the root subgroups of $\setof{\alpha_i}{2\leq i\leq k}$. Let $\eta_1^{\vee},\ldots,\eta_{k}^{\vee}$ be the standard cocharacters of
$T_{\GL_k}$ and map $\eta_i^{\vee}\mapsto \epsilon_{i+1}^{\vee}-\epsilon_{1}^{\vee}$ for $1\leq i\leq k$.
Regarding $G_{n-k}$, the set $\setof{\alpha_i}{k+2\leq i\leq n+1}$ identifies $G'_{n-k}$ and if $\theta_1,\ldots,\theta_{n-k+1}$ are the characters of $T_{n-k+1}$, define $\theta_1^{\vee}\mapsto\epsilon_1^{\vee}$ and for $2\leq i\leq n-k+1$, $\theta_i^{\vee}\mapsto \epsilon_{k+i}^{\vee}$.

The projection $G'_{n}\rightarrow \SO_{2n+1}$ is an isomorphism between unipotent subgroups, hence
we can identify unipotent subgroups of $G_{n}$ with those of $\SO_{2n+1}$.

We use the character $\epsilon_1$ to define a ``canonical" character $\Upsilon$ of $G_{n}$. Namely $\Upsilon$ is the extension of $-\epsilon_1$ to $G_{n}$ (the only other choice would be to use $\epsilon_1$).

The aforementioned embedding of $\GL_k\times G_{n-k}$ in $M_k$ has a few properties, suitable for computations. To compute the conjugation of $b\in\GL_k$ on $U_k$, we can simply look at this action in $\SO_{2n+1}$, where $b$ takes the
form $\mathrm{diag}(b,I_{2(n-k)+1},J_k\transpose{b}^{-1}J_k)$. The image of $G_0$ is $C_{G_n}$.
The restriction of $\Upsilon$ to $\GL_k$ is $\det$.

\subsection{The double cover of $\GSpin_{2n+1}$}\label{subsection:metaplectic GSpin}
Let $\cover{G}'_{n+1}$ be the double cover of $G'_{n+1}$, constructed by Matsumoto \cite{Mats} using $(,)_{2}$ as the Steinberg symbol. Restricting the cover to $G_{n}$, we obtain the exact sequence
\begin{align*}
1\rightarrow{\mu_2}%\xrightarrow{\iota}
\rightarrow\cover{G}_{n}\xrightarrow{p}G_{n}\rightarrow 1.
\end{align*}
Then $\cover{G}_{n}$ is a double cover of $G_{n}$. For a subset $X\subset G_n$, $\widetilde{X}=p^{-1}(X)$.
Let $\mathfrak{s}:G'_{n+1}\rightarrow\cover{G}'_{n+1}$ be the block-compatible section constructed by Banks, Levi and Sepanski \cite{BLS}
and $\sigma_{G'_{n+1}}$ be the corresponding cocycle. Denote the restriction of $\sigma_{G'_{n+1}}$ to $G_n\times G_n$ by $\sigma_{G_n}$. In \cite{me8} we proved that $\sigma_{G_n}$ satisfies the following block compatibility property: if $(a,g),(a',g')\in\GL_k\times G_{n-k}\isomorphic M_k$,
\begin{align}\label{eq:block-compatibility}
\sigma_{G_n}((a,g),(a',g'))=\sigma_{\GL_{k}}(a,a')\sigma_{G_{n-k}}(g,g')(\Upsilon(g),\det{a'})_2.
\end{align}
We also mention that $C_{\widetilde{G_n}}=\widetilde{C_{G_n}}$.

\subsection{Representations}\label{subsection:representations}
Let $G$ be an $l$-group (\cite{BZ1} 1.1). Representations of $G$ will be complex and smooth.
For a representation $\pi$ of $G$, $\pi^{\vee}$ is the representation contragradient to $\pi$. %If $\pi$ has
%a central character, it is denoted $\omega_{\pi}$.
We say that $\pi$ is glued from representations $\pi_1,\ldots,\pi_k$, if $\pi$ has a filtration, whose quotients (which may be isomorphic or zero) are, after a permutation, $\pi_1,\ldots,\pi_k$.

Regular induction is denoted $\Ind$ and $\ind$ is the compact induction.
Induction is not normalized.

Let $\pi$ be as above and let $U<G$ be a unipotent subgroup, exhausted by its compact subgroups (always the case here). Let $\psi$ be a character of $U$. The Jacquet module of $\pi$ with respect to $U$ and $\psi$ is denoted $\pi_{U,\psi}$. It is a representation of the stabilizer of $\psi$ (and normalizer of $U$). The action is not normalized. We have an exact sequence
\begin{align*}
0\rightarrow \pi(U,\psi)\rightarrow \pi\rightarrow \pi_{U,\psi}\rightarrow0.
\end{align*}
The kernel $\pi(U,\psi)$ can be characterized by the Jacquet-Langlands lemma:
\begin{lemma}\label{lemma:Jacquet kernel as integral}(see e.g. \cite{BZ1} 2.33)
a vector $v$ in the space of $\pi$ belongs to $\pi(U,\psi)$ if and only if
\begin{align*}
\int_O\pi(u)v\ \psi^{-1}(u)\ du=0,
\end{align*}
for some compact subgroup $O<U$.
\end{lemma}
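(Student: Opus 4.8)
The plan is to start from the definition of the Jacquet module: $\pi_{U,\psi}=\pi/\pi(U,\psi)$ is the maximal quotient of $\pi$ on which $U$ acts through $\psi$, so that $\pi(U,\psi)=\Span\setof{\pi(u_0)w-\psi(u_0)w}{u_0\in U,\ w\in\pi}$. Since $U$ is exhausted by an increasing chain of compact open subgroups, and $\pi$ is smooth while $\psi$ is continuous, for a fixed vector $v$ and any sufficiently large compact open subgroup $O<U$ the integral $\int_O\pi(u)v\,\psi^{-1}(u)\,du$ reduces to the finite weighted sum $\mathrm{vol}(O_0)\sum_{u\in O/O_0}\psi^{-1}(u)\pi(u)v$, where $O_0<O$ is a compact open subgroup fixing $v$ on which $\psi$ is trivial. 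This reduction, together with left-invariance and unimodularity of the Haar measure on $U$, is the only analytic input.

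For the implication ``the integral vanishes for some $O$ $\Rightarrow v\in\pi(U,\psi)$'' I would work modulo $\pi(U,\psi)$: there $\pi(u)v$ and $\psi(u)v$ have the same image, so the finite sum above collapses to $[O:O_0]$ times the image $\overline v$ of $v$ in $\pi_{U,\psi}$. Vanishing of the integral then says $[O:O_0]\cdot\overline v=0$, and since the coefficient ring is $\C$ this forces $\overline v=0$, i.e. $v\in\pi(U,\psi)$.

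For the converse it is enough to find, for each spanning vector $v=\pi(u_0)w-\psi(u_0)w$, one compact open subgroup $O$ with $\int_O\pi(u)v\,\psi^{-1}(u)\,du=0$, and then to verify that the set of vectors admitting such an $O$ is a linear subspace. For a spanning vector one takes $O$ to be any compact open subgroup of $U$ containing $u_0$ (available by the exhaustion hypothesis); the substitution $u\mapsto uu_0$ together with $Ou_0=O$ makes the two integrals coming from the two terms of $v$ cancel. Closure under scalars is immediate, and for a sum $v_1+v_2$ one passes to a common compact open subgroup $O\supseteq O_1,O_2$: left-invariance of the Haar measure shows that vanishing over each $O_i$ forces vanishing over $O$, so $\int_O\pi(u)(v_1+v_2)\,\psi^{-1}(u)\,du=0$.

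The argument has no genuine obstacle; the only point requiring care is the measure-theoretic bookkeeping — keeping track of left versus right translates on the (nonabelian) unipotent group $U$, and of the passage between the integral over $O$ and the finite sum over $O/O_0$ — which is routine once the exhaustion of $U$ by compact open subgroups has been invoked.
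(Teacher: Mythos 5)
Your argument is correct and is exactly the standard proof of this lemma (which the paper does not prove itself but simply cites from Bernstein--Zelevinsky \cite{BZ1} 2.33): reduce the integral to a finite sum over $O/O_0$ by smoothness, observe that modulo $\pi(U,\psi)$ the sum collapses to a positive multiple of $\overline{v}$, and for the converse check the generators $\pi(u_0)w-\psi(u_0)w$ directly and verify that the vanishing condition is linear by enlarging to a common compact open subgroup. No gaps.
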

When $\psi=1$, we simply write $\pi(U)$ and $\pi_U$. %If $U$ is a maximal unipotent subgroup, we say that $\psi$ is a generic character
%if $\psi$ restricts to a nontrivial character of any of the simple root subgroups contained in $U$.

Assume that $\widetilde{G}$ is a given $r$-th cover of $G$. Let $\varepsilon:\mu_r\rightarrow\C^*$ be a faithful character. A representation $\pi$ of $\widetilde{G}$ is called $\varepsilon$-genuine if it restricts to $\varepsilon$ on $\mu_r$. When $r=2$, such a representation is simply called genuine.

Let $\varphi:G\rightarrow\widetilde{G}$ be a section and assume $\pi$ and $\pi'$ are representations of $\widetilde{G}$, such that
$\pi$ is $\varepsilon$-genuine and $\pi'$ is $\varepsilon^{-1}$-genuine. Then $\pi\otimes \pi'$ (outer tensor product) is a representation of $G$ via $g\mapsto\pi(\varphi(g))\otimes\pi'(\varphi(g))$. The actual choice of $\varphi$ does not matter, whence
we omit it.

%Assume that $\widetilde{G}$ is a given double cover of $G$ with a section $\kappa:G\rightarrow\widetilde{G}$, and $\pi$ and $\pi'$ are genuine %representations of $\widetilde{G}$. Then $\pi\otimes \pi'$ (outer tensor product) is a representation of $G$ via %$g\mapsto\pi(\kappa(g))\otimes\pi'(\kappa(g))$. The actual choice of $\kappa$ does not matter, whence
%we omit it.

%If $\pi$ is a representation of a subgroup $H<G$ and $w\in G$, denote by $\rconj{w}\pi$ the representation of $\rconj{w}H$ on the
%space of $\pi$ acting by $\rconj{w}\pi(h)=\pi(\rconj{w^{-1}}h)$.
\subsection{Representations of Levi subgroups}\label{subsection:representations of Levi subgroups}
%\subsection{Representations of $\widetilde{M}_k$}\label{subsection:representations of M_k}
Levi subgroups of classical groups are direct products. The tensor of representations of the direct factors is usually 
used, to describe their representations. In passing to a cover group, these factors do not necessarily commute and then the tensor construction fails.

Except for the case of $k=n$, the preimages of $\GL_k$ and $G_{n-k}$ in $\widetilde{M}_k$ do not commute. The same phenomena occurs in $\GL_n$.
The following discussion describes a replacement for the usual tensor product. For more details see \cite{me8}.

The metaplectic tensor product in the context of $\GL_{n}$ has been studied by various authors \cite{FK,Su2,Kable,Mezo,Tk2}.
Our definitions were motivated by the construction of Kable \cite{Kable} (see Remark~\ref{remark:diff Kable tensor} below).

For any $H<G_n$, put $H^{\square}=\setof{h\in H}{\Upsilon(h)\in {F^{*2}}}$. The subgroup $H^{\square}$ is normal in $H$ and the quotient is a finite
abelian group. If $\xi$ is a representation of $\cover{H}$, let $\xi^{\square}=\xi|_{\cover{H}^{\square}}$. Assume $0<k<n$. According to \eqref{eq:block-compatibility}, the preimages of $\GL_k^{\square}$ and $G_{n-k}^{\square}$ are commuting in the cover. Then if $\rho$ and $\pi$ are genuine representations of $\widetilde{H}_1<\widetilde{\GL}_k$ and $\widetilde{H}_2<\widetilde{G}_{n-k}$, the representation $\rho^{\square}\otimes\pi^{\square}$ is a genuine representation of
\begin{align*}
p^{-1}(H_1^{\square}\times H_2^{\square})\isomorphic \lmodulo{\setof{(\epsilon,\epsilon)}{\epsilon\in\mu_2}}{(\widetilde{H}_1^{\square}\times\widetilde{H}_1^{\square})}.
\end{align*}
Put $H=H_1H_2$ and define
\begin{align*}
\mathcal{I}^{\square}(\rho,\pi)=\ind_{p^{-1}(H_1^{\square}\times H_2^{\square})}^{\cover{H}}{(\rho^{\square}\otimes\pi^{\square})}.
\end{align*}
When $k=n$, $\widetilde{\GL}_n$ and $\widetilde{G}_0$ are commuting, then the metaplectic tensor is defined as usual. % and we put %$\mathcal{I}(\rho,\pi)=\rho\otimes\pi$. We also allow $k$ to be $0$ and then $\mathcal{I}(\rho,\pi)=\pi$.

\begin{remark}\label{remark:diff Kable tensor}
The arguments in \cite{Kable} do not readily extend to $G_n$, mainly because $C_{G_n}<G_n^{\square}$ for all $n$, and then $C_{G_n}$ does not play a role similar to that of $C_{\GL_n}$ in the cover.
\end{remark}

We will use Mackey Theory to relate this induced representation to $\rho$ and $\pi$. We reproduce the following result from \cite{me8}, which mimics \cite{Kable} (Theorem~3.1) in our context.
\begin{lemma}\label{lemma:induced representation composition factors}
%Assume $0<k<n$.
The representation $\mathcal{I}^{\square}(\rho,\pi)$ is a direct sum of $[F^*:{F^{*2}}]$ copies of
\begin{align*}
\ind_{p^{-1}(H_1^{\square}\times H_2)}^{\widetilde{H}}(\rho^{\square}\otimes\pi).
\end{align*}
\end{lemma}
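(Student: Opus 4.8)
The plan is to establish the decomposition by Mackey theory, in three moves: induction in stages, the projection formula, and a conjugation argument that collapses the resulting twists.

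First I would factor the induction through the intermediate group $p^{-1}(H_1^{\square}\times H_2)$. In the chain $p^{-1}(H_1^{\square}\times H_2^{\square})\le p^{-1}(H_1^{\square}\times H_2)\le\cover{H}$ each subgroup has finite index in the next, since $H_2/H_2^{\square}$ embeds in $F^*/F^{*2}$ via $\Upsilon$; so $\ind$ agrees with regular induction and induction in stages applies:
\begin{align*}
\mathcal{I}^{\square}(\rho,\pi)=\ind_{p^{-1}(H_1^{\square}\times H_2)}^{\cover{H}}\Bigl(\ind_{p^{-1}(H_1^{\square}\times H_2^{\square})}^{p^{-1}(H_1^{\square}\times H_2)}(\rho^{\square}\otimes\pi^{\square})\Bigr).
\end{align*}
The key observation is that, by the block-compatibility formula \eqref{eq:block-compatibility}, the Steinberg-symbol factor $(\Upsilon(g),\det{a'})_2$ is trivial as soon as $a'\in\GL_k^{\square}$; hence the preimages of $H_1^{\square}$ and $H_2$ already commute, $\rho^{\square}\otimes\pi$ is a genuine representation of $p^{-1}(H_1^{\square}\times H_2)$, and $\rho^{\square}\otimes\pi^{\square}$ is precisely its restriction to $p^{-1}(H_1^{\square}\times H_2^{\square})$.

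Next I would evaluate the inner induction. Since $p^{-1}(H_1^{\square}\times H_2^{\square})$ is normal in $p^{-1}(H_1^{\square}\times H_2)$ with quotient the finite abelian group $\Gamma=H_2/H_2^{\square}$, the projection formula gives
\begin{align*}
\ind_{p^{-1}(H_1^{\square}\times H_2^{\square})}^{p^{-1}(H_1^{\square}\times H_2)}\bigl(\rho^{\square}\otimes\pi^{\square}\bigr)\isomorphic(\rho^{\square}\otimes\pi)\otimes\C[\Gamma]\isomorphic\bigoplus_{\chi\in\widehat{\Gamma}}\rho^{\square}\otimes(\pi\otimes\chi),
\end{align*}
where each character $\chi$ is inflated to $\cover{H}_2$. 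Substituting back,
\begin{align*}
\mathcal{I}^{\square}(\rho,\pi)\isomorphic\bigoplus_{\chi\in\widehat{\Gamma}}\ind_{p^{-1}(H_1^{\square}\times H_2)}^{\cover{H}}\bigl(\rho^{\square}\otimes(\pi\otimes\chi)\bigr).
\end{align*}

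Finally I would show that each summand is isomorphic to the one indexed by the trivial character. For $a\in\GL_k=H_1$ the element $\mathfrak{s}(a,1)$ lies in $\cover{H}$ and normalizes $p^{-1}(H_1^{\square}\times H_2)$; by \eqref{eq:block-compatibility} conjugation by it acts on $\cover{H}_1^{\square}$ by an inner automorphism of $\cover{H}_1$ (hence fixes $\rho^{\square}$ up to isomorphism) and sends $\mathfrak{s}(1,g)\mapsto(\Upsilon(g),\det{a})_2\,\mathfrak{s}(1,g)$, i.e. carries $\pi\otimes\chi$ to $\pi\otimes(\chi\cdot\chi_a)$ with $\chi_a(g)=(\Upsilon(g),\det{a})_2$. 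Conjugating the inducing datum therefore gives $\ind_{p^{-1}(H_1^{\square}\times H_2)}^{\cover{H}}(\rho^{\square}\otimes(\pi\otimes\chi))\isomorphic\ind_{p^{-1}(H_1^{\square}\times H_2)}^{\cover{H}}(\rho^{\square}\otimes(\pi\otimes\chi\chi_a))$ for every $a$. Since $\det\colon\GL_k\to F^*$ is surjective and the quadratic Hilbert symbol is a perfect pairing on $F^*/F^{*2}$, the characters $\chi_a$ exhaust $\widehat{\Gamma}$, and $|\Gamma|=[F^*:F^{*2}]$; thus all $[F^*:F^{*2}]$ summands are isomorphic to $\ind_{p^{-1}(H_1^{\square}\times H_2)}^{\cover{H}}(\rho^{\square}\otimes\pi)$, which is the assertion. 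The step I expect to be the main obstacle is this last one: one must carry the $\mu_2$- and cocycle-twists correctly through \eqref{eq:block-compatibility}, and then verify that conjugation by $\GL_k$ realizes \emph{every} character of $\Gamma$, so that the a priori possibly inequivalent pieces $\ind(\rho^{\square}\otimes(\pi\otimes\chi))$ are in fact all copies of one representation — this is exactly where non-degeneracy of the Hilbert symbol on $F^*/F^{*2}$ (together with surjectivity of $\Upsilon$ modulo squares, giving $|\Gamma|=[F^*:F^{*2}]$) enters.
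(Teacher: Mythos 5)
Your proof is correct and takes essentially the same route as the paper's: both decompose $\mathcal{I}^{\square}(\rho,\pi)$ into the summands $\ind_{p^{-1}(H_1^{\square}\times H_2)}^{\widetilde{H}}(\rho^{\square}\otimes\chi\pi)$ indexed by the characters $\chi$ of $H_2/H_2^{\square}$, and then use the commutation relation coming from \eqref{eq:block-compatibility} together with conjugation by elements of $\widetilde{H}_1$ (which fixes $\rho^{\square}$ up to isomorphism and realizes every such $\chi$ as a twist) to identify all summands with the untwisted one. The only cosmetic difference is that the paper indexes the Clifford-theoretic decomposition by cosets $a\in\lmodulo{H_1^{\square}}{H_1}$ via $\omega_a$ rather than by $\widehat{H_2/H_2^{\square}}$, whereas you reach it through induction in stages and the projection formula.
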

\begin{proof}[Proof of Lemma~\ref{lemma:induced representation composition factors}]
Since $p^{-1}(H_1^{\square}\times H_2^{\square})$ is normal of finite index in $\widetilde{H}$ and $p^{-1}(H_1^{\square}\times H_2)$ modulo $p^{-1}(H_1^{\square}\times H_2^{\square})$ is abelian,
\begin{align}\label{eq:claim tensor following Kable 1}
\mathcal{I}^{\square}(\rho,\pi)=\bigoplus_{a\in\lmodulo{H_1^{\square}}{H_1}}\ind_{p^{-1}(H_1^{\square}\times H_2)}^{\widetilde{H}}(\rho^{\square}\otimes\omega_a\pi).
\end{align}
Here $\omega_a(h)=(\Upsilon(h),\det{a})_2$ ranges over the finite set of characters of $H_2$, which are trivial
on $H_2^{\square}$. By \eqref{eq:block-compatibility} if $a_0\in\widetilde{\GL}_k$ and $h_0\in\widetilde{G}_{n-k}$, $\rconj{a_0^{-1}}h_0=(\Upsilon(h_0),\det{a_0})_2h_0$. Hence $\rho^{\square}\otimes\omega_a\pi=\rconj{a}(\rconj{a^{-1}}(\rho^{\square})\otimes\pi)$ and the result follows.
\end{proof}

\subsection{The Weil representation}\label{subsection:the Weil representation}
We introduce the Weil representation, which plays an important role in this work. Let $n=2k$ and $\lambda$ be the symplectic bilinear form on $F^{n}$ defined by
$\lambda(u,v)=u\left(\begin{smallmatrix}&J_{k}\\-J_{k}\end{smallmatrix}\right)\transpose{v}$, where $u$ and $v$ are regarded as rows.
Let $H_{n}$ be the $(\glnidx+1)$-dimensional Heisenberg group, with the group operation given by
\begin{align*}
(u_1,u_2;z_1)\cdot (v_1,v_2;z_2)=(u_1+v_1,u_2+v_2,z_1+z_2+\lambda((u_1,u_2),(v_1,v_2))),
\end{align*}
where $u_i,v_i\in F^k$ and $z_i\in F$.

Let $\Sp_k$ be the symplectic group defined with respect to $\lambda$, i.e., the group of
$g\in\GL_n$ such that $\lambda(ug,vg)=\lambda(u,v)$ for all $u,v\in F^n$. Let $\widetilde{\Sp}_k$ be the metaplectic double cover of $\Sp_k$, realized using the normalized Rao cocycle \cite{Rao}. The group $\Sp_k$ acts on $H_n$ via $g^{-1}(u_1,u_2;z)g=((u_1,u_2)g;z)$.

 Fix a nontrivial additive character $\psi$ of $F$. Let $\omega_{\psi}$ be the Weil representation of $H_{n}\rtimes\widetilde{\Sp}_k$, realized on the space $\mathcal{S}(F^{k})$ of Schwartz-Bruhat functions on $F^{k}$. Recall the following formulas for $\omega_{\psi}$ (see \cite{P}): for $\varphi\in\mathcal{S}(F^{k})$,
\begin{align}
&\omega_{\psi}((u_1,0;z))\varphi(x)=\psi(z)\varphi(x+u_1),\label{eq:Weil X action}\\
&\omega_{\psi}((0,u_2;0))\varphi(x)=\psi(x J_{k}\transpose{u_2})\varphi(x),\label{eq:Weil R action}\\
&\omega_{\psi}((\left(\begin{smallmatrix}I_{k}&u\\&I_{k}\end{smallmatrix}\right),\epsilon))\varphi(x)=\epsilon\psi(\half x J_{k}\transpose{u}\transpose{x})\varphi(x)\qquad (\epsilon\in\mu_2).\label{eq:Weil Y action}
\end{align}

Let $R=\{(0,u_2;0)\}<H_n$ and $U=\{\left(\begin{smallmatrix}I_{k}&u\\&I_{k}\end{smallmatrix}\right)\}<\Sp_k$. Since $U$ normalizes (in fact, commutes with) $R$, $(\omega_{\psi})_{R}$ is a $U$-module.
We will use the following simple observation.
\begin{claim}\label{claim:Jacquet of Weil 1 dim}
The vector spaces $(\omega_{\psi})_{R}$ and $(\omega_{\psi})_{RU}$ are one dimensional.
\end{claim}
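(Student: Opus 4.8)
The plan is to compute the two Jacquet modules directly from the explicit Weil representation formulas \eqref{eq:Weil X action}--\eqref{eq:Weil Y action}, using the Jacquet--Langlands characterization in Lemma~\ref{lemma:Jacquet kernel as integral}. Recall that $\omega_{\psi}$ is realized on $\mathcal{S}(F^k)$, and the subgroup $R=\{(0,u_2;0)\}$ acts by the formula $\omega_{\psi}((0,u_2;0))\varphi(x)=\psi(xJ_k\transpose{u_2})\varphi(x)$, i.e.\ by multiplication by the character $x\mapsto\psi(xJ_k\transpose{u_2})$ of $F^k$.

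First I would identify $(\omega_{\psi})_R$. Since $R$ acts on $\varphi$ purely by multiplication by unitary characters of the additive group $F^k$, one expects the coinvariants to be one dimensional, detected by evaluation at $x=0$ (equivalently, by the integral $\varphi\mapsto\int_{F^k}\varphi(x)\,dx$ on the Fourier side). Concretely, given $\varphi\in\mathcal{S}(F^k)$ with $\varphi(0)=0$, its support avoids a neighbourhood of $0$, so one can choose a compact open subgroup $O<R$ large enough (depending on the support of $\varphi$) that $\int_O \omega_{\psi}((0,u_2;0))\varphi\,du_2=0$ pointwise: for each fixed $x\ne 0$ in the support the character $u_2\mapsto\psi(xJ_k\transpose{u_2})$ is nontrivial on $O$, so the integral kills it. Hence every such $\varphi$ lies in $\omega_{\psi}(R)$. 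Conversely the functional $\varphi\mapsto\varphi(0)$ is $R$-invariant and nonzero, so $(\omega_{\psi})_R$ is exactly one dimensional, spanned by the image of any $\varphi$ with $\varphi(0)\ne0$.

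Next I would pass to $(\omega_{\psi})_{RU}$. Since $U$ normalizes $R$ and acts on the quotient, it suffices to see that $U$ acts on the one-dimensional space $(\omega_{\psi})_R$ trivially, or at worst by a character, and that no further collapse occurs (a quotient of a one-dimensional space is zero or one dimensional, so the only thing to rule out is that it becomes zero, i.e.\ that the $U$-action on $(\omega_{\psi})_R$ is nontrivial as an abstract module — but a one-dimensional smooth module is its own coinvariant space). From \eqref{eq:Weil Y action}, $\omega_{\psi}((\left(\begin{smallmatrix}I_k&u\\&I_k\end{smallmatrix}\right),\epsilon))\varphi(x)=\epsilon\psi(\tfrac12 xJ_k\transpose{u}\transpose{x})\varphi(x)$, so evaluation at $x=0$ is sent to $\epsilon$ times itself; thus on $(\omega_{\psi})_R$ the group $U$ acts through the genuine character $\epsilon$, which is one dimensional, and therefore $(\omega_{\psi})_{RU}\cong(\omega_{\psi})_R$ is again one dimensional. (If one prefers to avoid the cover bookkeeping here, one notes only that the action is by a character and invokes again that $(\omega_{\psi})_R$ is already one dimensional, so taking further coinvariants over $U$ cannot increase the dimension, and the nonvanishing of the functional $\varphi\mapsto\varphi(0)$, which descends to $U$-coinvariants, shows it stays one dimensional.)

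The only mildly delicate point is the first step: verifying that the Jacquet--Langlands integral over a suitable compact $O$ actually vanishes for every $\varphi$ with $\varphi(0)=0$. This is where the Schwartz--Bruhat (locally constant, compactly supported) nature of $\varphi$ is essential — one covers the compact support of $\varphi$ by finitely many cosets on each of which $\varphi$ is constant and $x\ne0$, and chooses $O$ annihilating all the finitely many nontrivial additive characters that arise. Everything else is bookkeeping with the explicit formulas, so I do not expect any real obstacle.
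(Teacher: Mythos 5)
Your argument is correct and follows essentially the same route as the paper: identify $\omega_{\psi}(R)$ with the subspace of functions vanishing at $0$ via Lemma~\ref{lemma:Jacquet kernel as integral} and \eqref{eq:Weil R action}, so that evaluation at $0$ spans the one-dimensional quotient, and then observe via \eqref{eq:Weil Y action} that this functional also descends to the $U$-coinvariants. One caveat: your parenthetical claim that ``a one-dimensional smooth module is its own coinvariant space'' is false in general (if $U$ acted by a nontrivial character the coinvariants would vanish), but your proof does not actually rely on it, since the explicit formula shows the multiplier $\psi(\tfrac12 xJ_k\transpose{u}\transpose{x})$ equals $1$ at $x=0$, so the $U$-action on the quotient is genuinely trivial.
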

\begin{proof}[Proof of Claim~\ref{claim:Jacquet of Weil 1 dim}]
According to Lemma~\ref{lemma:Jacquet kernel as integral} and \eqref{eq:Weil R action}, the space of $\omega_{\psi}(R)$ is $\mathcal{S}(F^k\setdiff 0)$.
Hence $(\omega_{\psi})_{R}$ is one dimensional. Then $(\omega_{\psi})_{RU}=((\omega_{\psi})_{R})_U$ is nonzero, because by Lemma~\ref{lemma:Jacquet kernel as integral} and \eqref{eq:Weil Y action}, a function $\varphi\in \mathcal{S}(F^k)$ such that
$\varphi(0)\ne0$ does not belong to $\omega_{\psi}(RU)$.
\end{proof}

We will also encounter the tensor $\omega_{\psi}\otimes\omega_{\psi^{-1}}$ of two Weil representations. This is a representation of $H_n$, trivial 
on $C_{H_n}$, and a representation of $\Sp_k$. Regarding it as a representation of $\lmodulo{C_{H_n}}{H_n}$, we can compute its twisted Jacquet modules. 
The group $\Sp_k$ acts transitively on the nontrivial characters of $\lmodulo{C_{H_n}}{H_n}$, hence we can consider only one nontrivial character.
\begin{claim}\label{claim:Jacquet modules along Hn of double Weil reps}
Let $\mu$ be a character of $H_n$, which is trivial on $C_{H_n}$. 
\begin{enumerate}
\item If $\mu=1$, $(\omega_{\psi}\otimes\omega_{\psi^{-1}})_{H_n,\mu}$ is the trivial one-dimensional representation of $\Sp_k$.
\item If $\mu(u_1,u_2;z)=\psi((u_1)_1)$, 
$(\omega_{\psi}\otimes\omega_{\psi^{-1}})_{H_n,\mu}$ is the trivial one-dimensional representation of 
$P_{n-1,1}^{\circ}\cap \Sp_k$.
\end{enumerate}
\end{claim}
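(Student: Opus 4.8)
The plan is to reduce the statement to the one-dimensionality results already established for a single Weil representation, namely Claim~\ref{claim:Jacquet of Weil 1 dim}, together with a standard Fourier-analytic computation on $\mathcal{S}(F^k)\otimes\mathcal{S}(F^k)\isomorphic\mathcal{S}(F^k\times F^k)$. First I would spell out the action of $H_n$ on the tensor: using \eqref{eq:Weil X action} and \eqref{eq:Weil R action} for $\omega_{\psi}$ and the analogous formulas for $\omega_{\psi^{-1}}$ (obtained by replacing $\psi$ with $\psi^{-1}$), the element $(u_1,u_2;z)$ acts on $\varphi_1\otimes\varphi_2\in\mathcal{S}(F^k)\otimes\mathcal{S}(F^k)$ by $(x,y)\mapsto \psi(xJ_k\transpose{u_2})\psi^{-1}(yJ_k\transpose{u_2})\,\varphi_1(x+u_1)\varphi_2(y+u_1)$; the central term $\psi(z)\psi^{-1}(z)=1$ disappears, confirming triviality on $C_{H_n}$. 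Changing variables to $x'=x+u_1$, $y'=y+u_1$ turns the $u_1$-action into a pure translation in the diagonal direction, so it is convenient to use coordinates $(w,v)=(x,x-y)$, in which the $u_1$-action is translation in $w$ and the $u_2$-action is multiplication by $\psi(vJ_k\transpose{u_2})$.

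For part (1), with $\mu=1$: the $u_2$-action is multiplication by $\psi(vJ_k\transpose{u_2})$, so by Lemma~\ref{lemma:Jacquet kernel as integral} the Jacquet module kills every function supported away from $v=0$, i.e. $(\omega_{\psi}\otimes\omega_{\psi^{-1}})_{\{u_2\}}\isomorphic\mathcal{S}(F^k_w)$ via restriction to $v=0$ (this is literally the computation in the proof of Claim~\ref{claim:Jacquet of Weil 1 dim}, done in the product). Then quotienting further by the $u_1$-action, which is translation in $w$, collapses $\mathcal{S}(F^k_w)$ to a one-dimensional space by the same argument (a Schwartz function integrates to its total mass, and functions of integral zero span the translation-augmentation ideal). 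One then checks the residual $\Sp_k$-action on this line is trivial: $\Sp_k$ acts on $H_n$ preserving $C_{H_n}$ and fixing $\mu=1$, so it acts on the line, and the action is through a character of $\Sp_k$; since $\Sp_k$ is its own derived group (for $k\geq1$) any such character is trivial. For part (2), with $\mu(u_1,u_2;z)=\psi((u_1)_1)$: now the twisted Jacquet module along $\{u_1\}$ picks out, after restricting to $v=0$, the functions on $F^k_w$ against which we integrate with the extra twist $\psi^{-1}((u_1)_1)$; again by Lemma~\ref{lemma:Jacquet kernel as integral} this is one-dimensional (evaluation of the Fourier transform at the character $\psi^{-1}$ in the first coordinate is the unique, up to scalar, $\psi$-equivariant functional). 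The stabilizer of $\mu$ in $\Sp_k$ is computed from the action $(u_1,u_2;z)\mapsto((u_1,u_2)g;z)$: $g$ fixes $\mu$ iff the first coordinate of $(u_1,u_2)g$ equals $(u_1)_1$ for all $(u_1,u_2)$, which — using that the first row of $g^{-1}$, or equivalently the relevant column, is pinned down — is exactly the condition cutting out $P_{n-1,1}^{\circ}\cap\Sp_k$ inside $\Sp_k$. On that one-dimensional space the stabilizer acts by a character, and one verifies it is trivial either by a direct Weil-formula check on the Siegel-unipotent and Levi pieces of $P_{n-1,1}^{\circ}\cap\Sp_k$ (using \eqref{eq:Weil Y action}) or by noting the stabilizer's action factors through its abelianization and tracking the normalization.

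The main obstacle I anticipate is not the Fourier analysis — that is a routine repetition of Claim~\ref{claim:Jacquet of Weil 1 dim} in a two-variable setting — but the bookkeeping needed to (a) identify the stabilizer in part (2) precisely as $P_{n-1,1}^{\circ}\cap\Sp_k$ in the chosen coordinates for $\lambda$ (the anti-diagonal form $\left(\begin{smallmatrix}&J_k\\-J_k\end{smallmatrix}\right)$ makes the "first coordinate of $(u_1,u_2)g$" condition slightly awkward to read off, and one must be careful whether it is the first row or the last row of an associated matrix that gets fixed), and (b) confirm that the residual action on the resulting line is genuinely \emph{trivial} rather than some quadratic character coming from the metaplectic cocycle. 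For (b), the cleanest route is to observe that since $\omega_{\psi}\otimes\omega_{\psi^{-1}}$ descends to an honest (non-genuine) representation of $\Sp_k$ — the cocycle contributions cancel, just as the central characters do — the residual action is a representation of the linear group $P_{n-1,1}^{\circ}\cap\Sp_k$ on a line, hence a character, and one pins it down by evaluating on a set of generators using \eqref{eq:Weil X action}, \eqref{eq:Weil R action}, \eqref{eq:Weil Y action}. I would present the proof as: set up coordinates and the $H_n$-action; do part (1) by the two-step Jacquet computation plus the $\Sp_k$-character argument; do part (2) by the twisted one-step computation, the explicit stabilizer identification, and the character triviality check.
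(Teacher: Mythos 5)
Your outline reproduces the paper's argument: a two-step coinvariant computation using the explicit formulas \eqref{eq:Weil X action}--\eqref{eq:Weil Y action} and Lemma~\ref{lemma:Jacquet kernel as integral}, first along $R=\{(0,u_2;0)\}$ (the paper shows the $R$-coinvariants are a single copy of $\mathcal{S}(F^k)$ by killing tensors with ``different supports''; your diagonal coordinates $(w,v)=(x,x-y)$ with restriction to $v=0$ are the same computation), then along the $u_1$-directions, with one-dimensionality read off from the Fourier transform, i.e.\ the total-integral functional. Your proof of triviality of the $\Sp_k$-action in part (1) via perfectness is the same mechanism as the paper's generation of $\Sp_k$ by conjugates of the Siegel unipotent $U$, on which the action is visibly trivial.

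The one step you have not actually closed is the triviality of the action of $P_{n-1,1}^{\circ}\cap\Sp_k$ in part (2), and one of your two proposed routes would fail. Saying that ``the action factors through the abelianization'' proves nothing here: unlike $\Sp_k$, the group $P_{n-1,1}^{\circ}\cap\Sp_k$ is very far from perfect (its abelianization sees the unipotent radical), so a character of it on a line has no reason to be trivial a priori --- pinning it down is precisely the remaining content. The efficient way to do this, which is how the paper proceeds, is structural rather than computational: since $\mu|_{R}=1$, the twisted module $(\omega_{\psi}\otimes\omega_{\psi^{-1}})_{H_n,\mu}$ is a quotient of $(\omega_{\psi}\otimes\omega_{\psi^{-1}})_{RC_{H_n}}$, and on that quotient the Siegel unipotent $U$ already acts trivially because on the diagonal $x=y$ the two phases in \eqref{eq:Weil Y action} cancel (this is the same observation you use in part (1)). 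Since $U<P_{n-1,1}^{\circ}\cap\Sp_k$ and the conjugates of $U$ by elements of the Levi factor $\Sp_{k-1}$ generate the stabilizer, triviality follows with no evaluation of Weil constants at all. You should replace the abelianization remark by this argument (or carry out the direct check on generators in full); as written, part (2) asserts the conclusion rather than proving it. The rest of the proposal, including the identification of the stabilizer and the one-dimensionality statements, matches the paper.
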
 
\begin{proof}[Proof of Claim~\ref{claim:Jacquet modules along Hn of double Weil reps}]
First assume $\mu=1$. 
Equality~\eqref{eq:Weil R action} implies that elements $\varphi\otimes\varphi'$ in the space of $\omega_{\psi}\otimes\omega_{\psi^{-1}}$, such that
the supports of $\varphi$ and $\varphi'$ (as functions in $\mathcal{S}(F^k)$) are different, vanish under the Jacquet module along $R$ (use Lemma~\ref{lemma:Jacquet kernel as integral}).
Hence the space of $(\omega_{\psi}\otimes\omega_{\psi^{-1}})_R$ is isomorphic to $\mathcal{S}(F^k)$. Since the action of $C_{H_n}$ is trivial
on $\omega_{\psi}\otimes\omega_{\psi^{-1}}$, $(\omega_{\psi}\otimes\omega_{\psi^{-1}})_R=(\omega_{\psi}\otimes\omega_{\psi^{-1}})_{RC_{H_n}}$.
Now $RC_{H_n}$ is a normal subgroup of $H_n$, whence $(\omega_{\psi}\otimes\omega_{\psi^{-1}})_{H_n}$ is a quotient of $(\omega_{\psi}\otimes\omega_{\psi^{-1}})_{RC_{H_n}}$.

It then follows
from \eqref{eq:Weil Y action} that the action of $U$ is trivial on
$(\omega_{\psi}\otimes\omega_{\psi^{-1}})_{RC_{H_n}}$ and in particular, on 
$(\omega_{\psi}\otimes\omega_{\psi^{-1}})_{H_n}$. The latter is a representation 
of $\Sp_k$, and because $\Sp_k$ is generated (as an abstract group) by the subgroups $\rconj{w}U$, $w\in\Sp_k$ (it is enough 
to take Weyl elements $w$), $(\omega_{\psi}\otimes\omega_{\psi^{-1}})_{H_n}$ must be a trivial representation of $\Sp_k$. 

Moreover
$(\omega_{\psi}\otimes\omega_{\psi^{-1}})_{H_n}$ is one-dimensional. This can be seen as follows. Replace
a function $f\in \mathcal{S}(F^k)=(\omega_{\psi}\otimes\omega_{\psi^{-1}})_{RC_{H_n}}$ with its Fourier transform
$\widehat{f}(x)=\int_{F^k}f(y)\psi(x(\transpose{y}))dy$. This changes the module structure, but the action of $\Sp_k$ remains trivial. The action of
$(u_1,0;0)\in H_n$ is now given by
\begin{align*}
(u_1,0;0)\cdot\widehat{f}(x)=\psi^{-1}(x\transpose{u_1})\widehat{f}(x)
\end{align*}
(instead of \eqref{eq:Weil X action}). Next apply Lemma~\ref{lemma:Jacquet kernel as integral}, the space of
$(\omega_{\psi}\otimes\omega_{\psi^{-1}})_{RC_{H_n}}(\lmodulo{(RC_{H_n})}{H_n})$ is $\mathcal{S}(F^k-0)$. 

We conclude that $(\omega_{\psi}\otimes\omega_{\psi^{-1}})_{H_n}$ is the trivial one-dimensional representation of $\Sp_k$. 

Now consider the case of the nontrivial $\mu$, given in the statement of the claim. Since $\mu|_{R}=1$, 
$(\omega_{\psi}\otimes\omega_{\psi^{-1}})_{H_n,\mu}$ is a quotient of 
$(\omega_{\psi}\otimes\omega_{\psi^{-1}})_{RC_{H_n}}$ and in particular, a trivial representation of $U$. In coordinates,
\begin{align*}
P_{n-1,1}^{\circ}\cap \Sp_k=\left\{\left(\begin{array}{ccc}1&u&v\\&g&*\\&&1\end{array}\right):g\in\Sp_{k-1}\right\}.
\end{align*}
(In particular it stabilizes $\mu$.) We see that $U<P_{n-1,1}^{\circ}\cap \Sp_k$ and using conjugations of $U$ by
elements from $\Sp_{k-1}$, it follows that $(\omega_{\psi}\otimes\omega_{\psi^{-1}})_{H_n,\mu}$ is the trivial representation of
$P_{n-1,1}^{\circ}\cap \Sp_k$. 

To show this is a one-dimensional representation, argue as above using the Fourier 
transform, the space of
$(\omega_{\psi}\otimes\omega_{\psi^{-1}})_{RC_{H_n}}(\lmodulo{(RC_{H_n})}{H_n,\mu})$ is $\mathcal{S}(F^k-(-1,0,\ldots,0))$. 
\end{proof}
\subsection{Exceptional or small representations}\label{subsection:The exceptional representations}
We describe the exceptional representations that appear in this work. Kazhdan and Patterson \cite{KP} introduced and studied these
representations for $\GL_n$. For $G_n$ these are essentially the small representations of Bump, Friedberg and Ginzburg \cite{BFG},
who developed them using a cover of $\SO_{2n+1}$ (see Section~\ref{section:relevance to SO(2n+1)}). Their construction was extended to $G_n$ in \cite{me8}.

Let $G$ be either $\GL_n$ or $G_n$. Let $B$ be the Borel subgroup of $G$, $T$ be the maximal torus and $\Delta$ be the subset of simple roots. For $\alpha\in\Delta$,
if $\alpha$ is a long root and $n>1$, put $\mathfrak{l}(\alpha)=2$, otherwise $\mathfrak{l}(\alpha)=1$. Denote by $\alpha^{\vee}$ the coroot of $\alpha$.

Let $\xi$ be a genuine character of $C_{\cover{T}}$. We say that $\xi$ is exceptional if
\begin{align*}
\xi(\mathfrak{s}(\alpha^{\vee}(x^{\mathfrak{l}(\alpha)})))=|x|,\qquad\forall \alpha\in\Delta,x\in F^*.
\end{align*}
The character $\xi$ corresponds to a genuine irreducible representation $\rho(\xi)$ of $\widetilde{T}$ (when $n>1$, $\widetilde{T}$ is a $2$-step nilpotent subgroup). Since $\xi$ is exceptional,
the representation $\Ind_{\widetilde{B}}^{\widetilde{G}}(\delta_{B}^{1/2}\rho(\xi))$ has a unique irreducible quotient $\theta$, called an
exceptional representation. Note that $\theta$ is admissible. Occasionally, we use the notation
$\theta^{G}$ to record the group.

We appeal to the following explicit description of $C_{\widetilde{T}}$:
$C_{\widetilde{T}_{\GL_n}}=\widetilde{T}_{\GL_n}^2C_{\widetilde{\GL}_n}$ with
$T_{\GL_n}^2=\setof{t^2}{t\in T_{\GL_n}}$,
$C_{\widetilde{\GL}_n}=\setof{z\cdot I_n}{z\in F^{*\mathe}}$, where
$\mathe$ is $1$ if $n$ is odd, otherwise $\mathe=2$; $C_{\widetilde{T}_{n+1}}=C_{\widetilde{T}_{\GL_n}}\widetilde{G}_0$, and if
$T_{n+1}^2=T_{\GL_n}^2G_0$, $C_{\widetilde{T}_{n+1}}=\widetilde{T}_{n+1}^2C_{\widetilde{\GL}_n}$ (see \cite{KP} p.~57 and \cite{me8} Section~2.1.6). 
Note that $G_0^{\square}=G_0$. Furthermore, the cocycle $\sigma_G$ satisfies $\sigma_G(z\cdot I_n,z'\cdot I_n)=(z,z')^{\lceil n/2\rceil}$.

The exceptional characters can be parameterized in the following way. Start by fixing a character $\xi_0$ of
$p(C_{\widetilde{T}})$, trivial on $p(C_{\widetilde{G}})$. This does not determine $\xi_0$ uniquely.
For $\GL_n$, take $\xi_0=\delta_{B_{\GL_n}}^{1/4}$. For $G_n$ take $\xi_0$ whose restriction to
$p(C_{\widetilde{T}_{\GL_n}})$ is $\delta_{B_{\GL_n}}^{1/4}\cdot|\det{}|^{(n+1)/4}$.
Given that $\xi_0$ is trivial on $p(C_{\widetilde{G}_n})$, this determines $\xi_0$ uniquely for $G_n$ (see \cite{me8} Section~2.3.3 for
an explicit formula).

Now for any given $\xi$, there is a character $\chi$ of $F^*$ (called ``determinantal character" in \cite{BG}) and a nontrivial additive
character $\psi$ of $F$ such that %if $t\in T^2$,
%$d$ is the image of $z\cdot I_n$ in $T$ for $z\in F^{*\mathe}$, and $\epsilon\in\mu_2$,
\begin{align}\label{eq:explicit construction exceptional characters}
\xi(\epsilon\mathfrak{s}(td))=\epsilon\xi_0(td)\chi(\Upsilon(td))\gamma_{\psi}^{\lceil n/2\rceil}(z),\qquad
\forall t\in T^2,d=z^{\mathe}\cdot I_n\in T,\epsilon\in\mu_2.
\end{align}
%Here if $G=\GL_n$, $\Upsilon$ is taken to be $\det$.
The corresponding exceptional
representation will be denoted $\theta_{\chi,\psi}$.

%To verify this formula, note that $C_{\widetilde{T}}=\widetilde{T}^2C_{\widetilde{\GL}_n}$,
%where $\widetilde{T}^2=p^{-1}(\setof{t^2}{t\in T})$ (see \cite{KP} p.~57 and \cite{me8}).
\begin{claim}\label{claim:taking characters out of exceptional}
We have $\theta_{\chi,\psi}=\chi\theta_{1,\psi}$, where on the right-hand side we
pull back $\chi$ to a non-genuine character of $\widetilde{G}$ via $g\mapsto\chi(\Upsilon(p(g)))$.
Additionally, if $\psi_0$ is another additive character of $F$,
$\theta_{\chi,\psi}=\eta\theta_{\chi,\psi_0}$ for some square trivial character $\eta$ of $F^*$.
\end{claim}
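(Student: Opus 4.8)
The plan is to read off both assertions directly from the explicit parametrization \eqref{eq:explicit construction exceptional characters} of exceptional characters, reducing the statement about representations to a statement about the inducing data on $C_{\cover T}$, and then invoking uniqueness of the irreducible quotient. For the first assertion, I would start from the formula
\begin{align*}
\xi_{\chi,\psi}(\epsilon\mathfrak{s}(td))=\epsilon\,\xi_0(td)\,\chi(\Upsilon(td))\,\gamma_{\psi}^{\lceil n/2\rceil}(z),
\end{align*}
and observe that the non-genuine character $g\mapsto\chi(\Upsilon(p(g)))$ of $\cover G$, restricted to $C_{\cover T}$, contributes exactly the factor $\chi(\Upsilon(td))$ (note $\Upsilon(\epsilon\mathfrak{s}(td))=\Upsilon(td)$ since $p$ kills $\mu_2$). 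Hence $\xi_{\chi,\psi}=\big(\chi\circ\Upsilon\circ p\big)\cdot\xi_{1,\psi}$ as characters of $C_{\cover T}$, which gives $\rho(\xi_{\chi,\psi})\isomorphic(\chi\circ\Upsilon\circ p)\otimes\rho(\xi_{1,\psi})$ on $\cover T$. Since twisting by the non-genuine character $\chi\circ\Upsilon\circ p$ of $\cover G$ commutes with normalized parabolic induction from $\cover B$ and with passing to the unique irreducible quotient (it is an exact auto-equivalence of the relevant category, being a one-dimensional twist), we get $\theta_{\chi,\psi}\isomorphic\chi\,\theta_{1,\psi}$, where $\chi$ on the right is pulled back via $g\mapsto\chi(\Upsilon(p(g)))$, exactly as claimed. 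I should double-check that $\Upsilon$ is multiplicative and that $\chi\circ\Upsilon\circ p$ is genuinely trivial on $\mu_2$, so that the twisted representation is still genuine of the correct type; both are immediate from the definition of $\Upsilon$ and the fact that $p(\mu_2)=1$.

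For the second assertion, the point is that changing $\psi$ to $\psi_0$ changes only the factor $\gamma_{\psi}^{\lceil n/2\rceil}(z)$ to $\gamma_{\psi_0}^{\lceil n/2\rceil}(z)$ in \eqref{eq:explicit construction exceptional characters}, where $z$ ranges over $F^{*\mathe}$ and $d=z^{\mathe}\cdot I_n$. Writing $\psi_0=\psi_a$ for $\psi_a(x)=\psi(ax)$ with $a\in F^*$, the standard cocycle identity for the Weil index gives $\gamma_{\psi_a}(z)=\gamma_{\psi}(z)\cdot(a,z)_2$ up to a fourth root of unity independent of $z$; raising to the power $\lceil n/2\rceil$, the ratio $\gamma_{\psi_0}^{\lceil n/2\rceil}(z)/\gamma_{\psi}^{\lceil n/2\rceil}(z)$ is, as a function of $z\in F^{*\mathe}$ and hence of $d$, a character $\eta_0$ of the form $z\mapsto(a,z)_2^{\lceil n/2\rceil}\cdot(\text{constant})$, which is quadratic (square-trivial) in $d$. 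Pulling $\eta_0$ back through $\Upsilon$ to a square-trivial character $\eta$ of $F^*$ (so that $\eta(\Upsilon(d))=\eta_0(d)$ on the relevant part of $C_{\cover T}$, and $\eta$ is trivial on $T^2$), we obtain $\xi_{\chi,\psi}=(\eta\circ\Upsilon\circ p)\cdot\xi_{\chi,\psi_0}$ on $C_{\cover T}$, and the same twisting argument as above yields $\theta_{\chi,\psi}=\eta\,\theta_{\chi,\psi_0}$ with $\eta$ square-trivial.

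The routine points — exactness of one-dimensional twists, compatibility with normalized induction and with the Langlands-type quotient — are standard and I would state them briefly. The one place demanding care, and the main obstacle, is the bookkeeping in the second assertion: one must verify that the discrepancy $\gamma_{\psi_0}^{\lceil n/2\rceil}(z)/\gamma_{\psi}^{\lceil n/2\rceil}(z)$, a priori only defined on $z\in F^{*\mathe}$, really does extend (through $\Upsilon$, whose restriction to $\cover{\GL}_n$ recovers $\det$ and which on $C_{\cover T}$ sees $z^{\mathe}\cdot I_n\mapsto z^{n\mathe}$) to a well-defined square-trivial character $\eta$ of $F^*$, consistently with the already-fixed dependence on $\chi$ and with triviality on $T^2$. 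This amounts to checking that the parity $\lceil n/2\rceil$ and the exponent $\mathe$ interact so that $(a,z)_2^{\lceil n/2\rceil}$, viewed as a function of $z^{\mathe}$, is a square-trivial character of $F^*$ — which it is, since the Hilbert symbol is itself square-trivial in each argument — and that raising $\gamma_\psi$ to the power $\lceil n/2\rceil$ does not introduce a genuinely fourth-order (non-square-trivial) discrepancy; here one uses $\gamma_\psi(\cdot)^4=1$ together with $\gamma_\psi(z)^2=(z,z)_2=(-1,z)_2$, so that $\gamma_\psi^{\lceil n/2\rceil}$ is, up to a square-trivial character, either $1$ or $\gamma_\psi$ depending on the parity of $\lceil n/2\rceil$, and in the latter case the ratio over the two additive characters is exactly $(a,\cdot)_2$, which is square-trivial. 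I would present this Weil-index computation as the heart of the argument and keep the rest to a few lines.
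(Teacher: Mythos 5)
Your proposal is correct and follows essentially the same route as the paper: both assertions are read off from the parametrization \eqref{eq:explicit construction exceptional characters}, the first by observing that $\chi\circ\Upsilon\circ p$ twists the inducing character (and twisting commutes with induction and with taking the unique irreducible quotient), the second via the Weil-index relation $\gamma_{\psi}=\eta_0\gamma_{\psi_0}$ with $\eta_0=(\alpha,\cdot)_2$, absorbing $\eta_0^{\lceil n/2\rceil}$ into the determinantal character through $\Upsilon$. One small remark: the hedge ``up to a fourth root of unity independent of $z$'' is unnecessary (the identity $\gamma_{\psi_\alpha}(z)=(\alpha,z)_2\gamma_\psi(z)$ is exact, and a nontrivial constant would in fact prevent the discrepancy from being a character), but you resolve this yourself by noting the ratio is exactly $(\alpha,\cdot)_2$.
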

\begin{proof}[Proof of Claim~\ref{claim:taking characters out of exceptional}]
The first assertion is clear. Write $\psi(x)=\psi_0(\alpha x)$ for some
$\alpha\in F^*$. Then $\gamma_{\psi}=\eta_0\gamma_{\psi_0}$, where $\eta_0(z)=(\alpha,z)_2$. Since $\eta_0$ is trivial
on $F^{*2}$ and
\begin{align*}
\eta_0(\Upsilon(d))=\eta_0(\det{d})=\eta_0(z^n)=\eta_0(z)
\end{align*}
(the last equality is trivial if $n$ is even, because then $z\in F^{*2}$), we obtain
\begin{align*}
\chi(\Upsilon(td))\gamma_{\psi}^{\lceil n/2\rceil}(z)=
(\eta_0^{\lceil n/2\rceil}\chi)(\Upsilon(td))\gamma_{\psi_0}^{\lceil n/2\rceil}(z).
\end{align*}
Hence $\theta_{\chi,\psi}=\theta_{\eta_0^{\lceil n/2\rceil}\chi,\psi_0}=\eta_0^{\lceil n/2\rceil}\theta_{\chi,\psi_0}$.
\end{proof}

%Therefore, it is often convenient to assume $\chi=1$. The additive character will
%also usually be fixed. Then we will simply denote the exceptional representation by $\theta$, or $\theta^{G}$ to record the group
%under consideration.

Exceptional representations have a useful inductive property. %The Jacquet functor along a standard unipotent subgroup takes an exceptional representation, to an exceptional representation of a Levi subgroup.
Let $\theta$ be an exceptional representation of $\widetilde{G}_{n}$. Following the arguments of Bump, Friedberg and Ginzburg \cite{BFG} (Theorem~2.3),
we computed $\theta_{U_k}$ (\cite{me8}). For $0<k<n$,
\begin{align}\label{eq:containment of Jacquet module along maximal unipotent}
(\theta_{\chi,\psi})_{U_k}\subset\mathcal{I}^{\square}(\theta^{\GL_k}_{|\cdot|^{(2n-k-1)/4}\chi,\psi},\theta^{G_{n-k}}_{\chi,\psi}).
\end{align}
If $k$ (resp. $n-k$) is odd, the exceptional representation of $\widetilde{\GL}_k$ (resp. $\widetilde{G}_{n-k}$) is unique only up
to varying the character $\psi$, or multiplying $\chi$ by a square trivial character of $F^*$. In any case, the space
on the \rhs\ of \eqref{eq:containment of Jacquet module along maximal unipotent} is unique, because by Claim~\ref{claim:taking characters out of exceptional},
the exceptional representation obtained by such a change to $\psi$ or $\chi$, has the same restriction to $\widetilde{\GL}_k^{\square}$ (resp. $\widetilde{G}_{n-k}^{\square}$)
as the original representation.

If $k=n$,
\begin{align}\label{eq:containment of Jacquet module along maximal unipotent 2}
(\theta_{\chi,\psi})_{U_n}=\theta^{\GL_n}_{|\cdot|^{(n-1)/4}\chi,\psi}\otimes\theta^{G_0}_{\chi,1}.
\end{align}
In \cite{me8} we did not find the precise exceptional representations appearing on the \rhs\ of \eqref{eq:containment of Jacquet module along maximal unipotent 2}, but this is
simple to obtain:
since $C_{\widetilde{T}_{n+1}}=C_{\widetilde{T}_{\GL_n}}\widetilde{G}_0$, there is an exceptional character $\xi_1$ of $C_{\widetilde{T}_{\GL_n}}$
such that $\rconj{w_0}\xi=\rconj{w_0'}\xi_1\otimes\xi|_{\widetilde{G}_0}$,
where $w_0$ and $w_0'$ are the longest Weyl elements in the Weyl groups of $G_n$ and $\GL_n$ (see \cite{me8} Claim~2.18 for details). It remains to write $\xi_1$
using \eqref{eq:explicit construction exceptional characters}.

\begin{remark}
The reason for the imprecise result when $k<n$ is the
lack of a definition for a tensor product. These results are sufficient for our applications.
\end{remark}

For $\GL_n$, Kable proved a result similar to \eqref{eq:containment of Jacquet module along maximal unipotent 2} for all standard unipotent radicals,
with the tensor replaced by his metaplectic tensor (\cite{Kable} Theorem~5.1 (4)).%, and without uniqueness.

Exceptional representations enjoy the vanishing of a large class of twisted Jacquet modules.
The following result is the extension of Theorem~2.6 of \cite{BFG} and Proposition~3 of \cite{BFG2} to $G_n$ (this extension
appeared in \cite{me8}). It was used in \cite{BFG,BFG2} (for $\SO_{2n+1}$) to deduce all vanishing properties.

The unipotent radical $U_1$ is abelian. A character $\psi^{(1)}$ of $U_1$ takes the form
\begin{align*}
\psi^{(1)}(\left(\begin{array}{ccc}1&u&*\\&I_{2n-1}&*\\&&1\end{array}\right))=\psi(ua),
\end{align*}
where $a\in F^{2n-1}$ is a column. The length of $a$ is defined to be $\transpose{a} J_{2n-1}a$.
When $\psi$ is fixed and clear from the context, we also refer to $\transpose{a}J_{2n-1}a$ as the length of $\psi^{(1)}$.
\begin{theorem}\label{theorem:main vanishing Jacquet of small rep}
If the length of $\psi^{(1)}$ is nonzero, $\theta_{U_1,\psi^{(1)}}=0$.
\end{theorem}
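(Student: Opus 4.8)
The plan is to reduce the statement to the corresponding vanishing for the exceptional representation of the linear group $\GL_{2n+1}$ (or its cover), using the fact that the cover of $G_n$ is obtained by restriction from $\Spin_{2n+3}$ and that the relevant unipotent subgroups $U_1$ are isomorphic to those appearing for $\SO_{2n+1}$ and hence for $\GL_{2n+1}$. First I would realize $\theta = \theta_{\chi,\psi}^{G_n}$ as the unique irreducible quotient of $\Ind_{\widetilde B_n}^{\widetilde G_n}(\delta_{B_n}^{1/2}\rho(\xi))$, and observe that since Jacquet modules are exact and $\theta$ is a quotient of the full induced representation, it suffices to control the twisted Jacquet module of that induced representation — more precisely, to combine the description of $\theta$ coming from the exceptional character $\xi$ with a geometric (Bruhat-cell) analysis of the space of $U_1$-coinvariants twisted by $\psi^{(1)}$.

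Concretely, the main step is a Bruhat decomposition argument: write $\Ind_{\widetilde B_n}^{\widetilde G_n}(\delta_{B_n}^{1/2}\rho(\xi))$ as a space of functions on $\widetilde G_n$, stratify $G_n$ by $B_n\backslash G_n / U_1$ double cosets (indexed by Weyl-group data), and show that on each stratum the contribution to the $(U_1,\psi^{(1)})$-twisted Jacquet module vanishes when the length $\transpose{a}J_{2n-1}a$ is nonzero. The vanishing on a given cell is governed by the conjugation action of a torus element on the character $\psi^{(1)}$ together with the exceptionality constraint $\xi(\mathfrak{s}(\alpha^\vee(x^{\mathfrak l(\alpha)})))=|x|$; the point is that for an anisotropic (nonzero-length) vector $a$, one produces a one-parameter unipotent or torus subgroup on which $\psi^{(1)}$ is forced to be trivial while the module carries a nontrivial character, killing that contribution. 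This is exactly the mechanism by which Bump–Friedberg–Ginzburg prove Theorem~2.6 of \cite{BFG} and Proposition~3 of \cite{BFG2}; the work here is to check that their root-datum computations go through verbatim for $G_n$ in place of $\SO_{2n+1}$, which is where the block-compatibility formula \eqref{eq:block-compatibility} and the explicit formula \eqref{eq:explicit construction exceptional characters} for the exceptional character enter.

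The alternative (and probably cleaner) route, which I would actually take, is simply to cite the extension carried out in \cite{me8}: since $\theta^{G_n}$ is constructed there precisely by transporting the \cite{BFG} construction along the isomorphism of unipotent subgroups of $G_n$ and $\SO_{2n+1}$ (both of type $B_n$), and since $U_1$ and the characters $\psi^{(1)}$ live entirely inside the unipotent radical, the computation of $\theta_{U_1,\psi^{(1)}}$ is identical to the orthogonal case. Thus the proof is: invoke the isomorphism of the relevant unipotent subgroups of $G_n$ and $\SO_{2n+1}$, note that the restriction of $\theta^{G_n}$ to $\widetilde{U_1}$ agrees with that of $\theta^{\SO_{2n+1}}$ under this identification (because both are governed by the same exceptional character restricted to the torus, modulo the determinantal/central data which does not affect $\widetilde{U_1}$), and apply Theorem~2.6 of \cite{BFG} together with its refinement in \cite{BFG2}.

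The hard part — or rather the only part requiring care — is the bookkeeping ensuring that passing from $\SO_{2n+1}$ to $G_n$, and in particular the passage through the simply-connected cover and the choice of section $\mathfrak s$, does not disturb the action on $\widetilde{U_1}$; this is precisely what \eqref{eq:block-compatibility} guarantees, since $U_1 \subset Q_1$ and the cocycle restricted to $U_1\times U_1$ is trivial (unipotent subgroups split canonically in $\widetilde G_n$), so the twisted Jacquet module computation is insensitive to the genuine data and reduces to the purely combinatorial orthogonal-group statement. Everything else is the routine root-system calculation already recorded in \cite{me8}.
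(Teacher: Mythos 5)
Your proposal coincides with the paper's treatment: the paper gives no argument for this theorem beyond identifying it as the extension to $G_n$ of Theorem~2.6 of \cite{BFG} and Proposition~3 of \cite{BFG2}, carried out in \cite{me8}, and the single line ``See \cite{me8} (Lemma~2.25) for the details'' --- which is exactly your preferred route, and your sketch of the underlying Bruhat-cell/exceptional-character mechanism is the right description of what that reference does. The only caution is that one cannot literally ``identify the restrictions of $\theta^{G_n}$ and $\theta^{\SO_{2n+1}}$ to $\widetilde{U}_1$'' (they are representations of different covers of different groups); the content of \cite{me8} is that the type-$B_n$ root-datum computation of \cite{BFG} is redone verbatim for $\widetilde{G}_n$, as your first paragraph correctly indicates.
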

See \cite{me8} (Lemma~2.25) for the details.
\section{Twisted Jacquet modules of $\theta^{G_n}$}\label{section:Heisenberg jacquet modules}

In this section we describe the twisted Jacquet modules of $\theta=\theta^{G_n}$ with respect to the center of $U_n$. These modules appear in a filtration of $\theta$ as a $\widetilde{Q}_n$-module and will be used in Section~\ref{section:Distinguished representations}.

The group $\GL_n$, embedded in $M_n$, acts on the set of characters of $C=C_{U_n}$ with finitely many orbits. Let $\psi$ be a nontrivial
additive character of $F$. For any $0\leq k\leq \lfloor n/2 \rfloor$, define a character of $C$ by
\begin{align*}
\psi_k(c)=\psi(\sum_{i=1}^kc_{n-2i+1,n+2i})
\end{align*}
($c$ is regarded as a $(2n+1)\times(2n+1)$ unipotent matrix in $\SO_{2n+1}$).
The stabilizer of $\psi_k$ in $\widetilde{M}_n$ is $\widetilde{\mathrm{St}}_{\psi_k}$, with
\begin{align*}
\mathrm{St}_{\psi_k}=\left\{\left(\begin{array}{cc}a&z\\0&b\end{array}\right):a\in\GL_{n-2k},b\in \Sp_{k}\right\}\times G_0.
\end{align*}
Here $\Sp_{k}$ is the symplectic group in $2k$ variables, corresponding to a symplectic bilinear form defined according to $\psi_{k}$.
Then $\theta_{C,\psi_k}$ is a representation of $\widetilde{\mathrm{St}}_{\psi_k}\ltimes U_n$.

%Let $Z_{n-2k,2k}=\{\left(\begin{smallmatrix}I_{n-2k}&z\\&I_{2k}\end{smallmatrix}\right)\}$. 
Regarding 
$\GL_{n-2k}$ and $\Sp_k$ as subgroups of $\mathrm{St}_{\psi_k}$,
\begin{align*}
\mathrm{St}_{\psi_k}=((\GL_{n-2k}\times \Sp_k)\ltimes Z_{n-2k,2k}) \times G_0.
\end{align*}
($Z_{n-2k,2k}$ was given in Section~\ref{subsection:GL_n and its cover}.)
%Note that the $\widetilde{\GL}_{n-2k}$ and $\widetilde{\Sp}_k$ commute, because the block compatibility of the cocycle implies
%for all $a\in\GL_{n-2k}$ and $b\in \Sp_{2k}$, $\sigma(a,b)=(\det{a},\det{b})=1$ ($\det{b}=1$). In particular, the tensor of representations of %$\widetilde{\GL}_{n-2k}$ and $\widetilde{\Sp}_k$ is well defined.

We turn to the proof of Theorem~\ref{theorem:small rep is weakly minimal}.
Namely, for $n=2k$, $\theta_{C,\psi_{k}}$ is the direct sum of copies of the Weil representation $\omega_{\psi}$.
Here is an outline of the proof. %Essentially $\mathrm{St}_{\psi_{k}}$ is a Jacobi group.
The theory of smooth representations of Jacobi groups (\cite{Dijk,MVW,BSch}) implies that any such representation, with a central character $\psi$, takes the form
$\kappa\otimes\omega_{\psi}$, where the Heisenberg group acts trivially on the space of $\kappa$, and the action of the
symplectic group separates into an action on the space of $\kappa$, and one on the space of $\omega_{\psi}$. The vanishing properties of $\theta$ - Theorem~\ref{theorem:main vanishing Jacquet of small rep}, will show that
$\kappa$ is trivial.

 %\begin{theorem}\ref{theorem:small rep is weakly minimal}
%Assume $n$ is even and put $k=n/2$. Then
%$\theta_{C,\psi_{k}}$ is the (possibly infinite) direct sum of representations $W_{\psi}$.
%\end{theorem}
\begin{proof}[Proof of Theorem~\ref{theorem:small rep is weakly minimal}]
Put $k=n/2$. The image of $G_0$ in $G_n$ is $C_{G_n}$. Hence $\mathrm{St}_{\psi_{k}}$ is the direct product of a Jacobi group and
$G_0$. Moreover, $\widetilde{G}_0=C_{\widetilde{G_n}}$ (see Section~\ref{subsection:metaplectic GSpin}) and because
$\theta$ is an irreducible representation, $\widetilde{G}_0$ acts by the central character of $\theta$. Therefore in the proof we ignore the $G_0$ part of
$\mathrm{St}_{\psi_k}$.

We may replace $\psi_k$ by any character of $C$ in the same $\GL_n$-orbit, since the Jacquet module will be isomorphic. For convenience,
we redefine $\psi_{k}(c)=\psi(\sum_{i=1}^kc_{i,{n}+1+i})$. We use the notation of Section~\ref{subsection:the Weil representation}. The stabilizer $\mathrm{St}_{\psi_k}$ is now the symplectic group defined with respect to the form $\lambda$. The cover $\widetilde{\mathrm{St}}_{\psi_k}$ is a nontrivial double cover.
We have an epimorphism $\ell:\widetilde{\mathrm{St}}_{\psi_k}\ltimes U_n\rightarrow \widetilde{\Sp}_k\ltimes H_n$:
\begin{align*}
&\ell(\left(\begin{array}{ccc}I_{{n}}&u&z\\&1&-\transpose{u}J_{{n}}\\&&I_{{n}}\end{array}\right))=(\transpose{u}J_{{n}};\half(\sum_{i=1}^kz_{i,i}-
\sum_{i=k+1}^{{n}}z_{i,i}))\in H_n,\\
&\ell((\left(\begin{array}{ccc}g&&\\&1&\\&&J_{{n}}\transpose{g}^{-1}J_{{n}}\end{array}\right),\epsilon))=(J_{{n}}\transpose{g}^{-1}J_{{n}},\epsilon)\in\widetilde{\Sp}_k \qquad(\text{$g$ preserves $\lambda$},\quad\epsilon\in\mu_2).
\end{align*}
The kernel of $\ell$ is contained in the kernel of $\psi_k$ \footnote{In \cite{me7} p.~25 it was incorrectly stated they are equal.}. Therefore we can regard $\theta_{C,\psi_k}$ as a genuine representation
of $\widetilde{\Sp}_k\ltimes H_n$. As such, it is isomorphic to $\kappa\otimes\omega_{\psi}$ (see e.g. \cite{BSch} p.~28), where
$\kappa$ is a non-genuine representation on a space $\mathbb{V}$, and the action is
given by
\begin{align*}
(g,\epsilon)h\cdot(\mathfrak{f}\otimes\varphi)=\kappa(g)\mathfrak{f}\otimes \omega_{\psi}((g,\epsilon)h)\varphi,\qquad g\in \Sp_k,\quad h\in H_n,\quad \mathfrak{f}\in\mathbb{V},\quad\varphi\in\mathcal{S}(F^k).
\end{align*}
We must show that $\kappa$ is trivial. Consider the subgroup
\begin{align*}
Y=\left\{\left(\begin{array}{ccc}1&0&y\\&I_{2(n-1)}&0\\&&1\end{array}\right)\right\}<\Sp_k.
\end{align*}
Since $\Sp_k$ is generated %(as an abstract group) 
by the conjugates $\rconj{x}Y$ where $x\in\Sp_k$, it is enough to prove
invariance under $Y$. That is, we show
\begin{align}\label{eq:invariancy under Y to prove}
\kappa_{Y}=\kappa.
\end{align}

A character of $Y$ takes the form $y\mapsto\psi(\alpha y)$ for some $\alpha\in F$ (on the \lhs\ $y$ is regarded as a matrix, on the \rhs\ as an element of $F$). The action of the torus of $\Sp_k$ on
the nontrivial characters of $Y$ has finitely many orbits, namely the different square classes in $F^*$. Each of these orbits is open. Therefore the kernel of the Jacquet functor $\kappa_{Y}$ is filtered by representations induced from $\kappa_{Y,\psi(\alpha\cdot)}$, where $\alpha$
ranges over the square classes (\cite{BZ1} 5.9-5.12). Hence \eqref{eq:invariancy under Y to prove} follows if we prove $\kappa_{Y,\psi(\alpha\cdot)}=0$ for any $\alpha\ne0$.

Consider the subgroup $X=Y\cdot\setof{(0,u_2;0)\in H_n}{u_2=(0,\ldots,0,r)}$ (a direct product). The
epimorphism $\ell$ splits over $X$, hence
there is a subgroup $U<\widetilde{\mathrm{St}}_{\psi_k}U_n$ isomorphic to $X$. In fact,
\begin{align*}
U=\left\{\left(\begin{array}{ccccccc}1&0&y&r&0&0&-r^2/2\\&I_{n-2}&&&&&0\\&&1&&&&0\\&&&1&&&-r\\&&&&1&&-y\\&&&&&I_{n-2}&0\\&&&&&&1\end{array}\right)\right\}<U_1.
\end{align*}
The pullback of $\psi(\alpha\cdot)$ to $U$ is $\psi^{\star}(u)=\psi(-\alpha u_{1,n})$. Observe that $(\theta_{C,\psi_k})_{U,\psi^{\star}}=0$. Indeed, $(\theta_{C,\psi_k})_{U,\psi^{\star}}=\theta_{CU,\psi_k\psi^{\star}}$, which is a quotient of $\theta_{U(C\cap U_1),\psi_k\psi^{\star}}$. Since for $u\in U(C\cap U_1)$, $\psi_k\psi^{\star}(u)=\psi(-\alpha u_{1,n}+u_{1,n+2})$, any extension of $\psi_k\psi^{\star}$ to a character of $U_1$ is a character of nonzero length. Thus Theorem~\ref{theorem:main vanishing Jacquet of small rep} yields $\theta_{U(C\cap U_1),\psi_k\psi^{\star}}=0$ whence $(\theta_{C,\psi_k})_{U,\psi^{\star}}=0$. Therefore by Lemma~\ref{lemma:Jacquet kernel as integral}, for any $\mathfrak{f}\otimes\varphi$ there is a compact $\mathcal{O}<Y\cdot R$ ($R=\{(0,u_2;0)\}$, see Section~\ref{subsection:the Weil representation}) such that
\begin{align}\label{eq:Y acts trivially relation to twisted Jacquet vanishing of theta}
\int_{\mathcal{O}}yr\cdot(\mathfrak{f}\otimes \varphi)\ \psi^{-1}(\alpha y)\ dr\ dy=0.
\end{align}

According to Claim~\ref{claim:Jacquet of Weil 1 dim} and Lemma~\ref{lemma:Jacquet kernel as integral} there is
$\varphi\in\mathcal{S}(F^k)$ such that for all compact subgroups $\mathcal{Y}<Y$ and $\mathcal{R}<R$,
\begin{align}\label{eq:def of varphi to take for tensor vanishing}
\varphi^{\mathcal{Y},\mathcal{R}}=\int_{\mathcal{Y}}\int_{\mathcal{R}}\omega_{\psi}(yr)\varphi\ dy\ dr\ne0.
\end{align}
Take $\mathfrak{f}\in\mathbb{V}$. We will show that for large enough $\mathcal{Y}$ and $\mathcal{R}$,
 \begin{align}\label{eq:tensor vanishes to show}
 \int_{\mathcal{Y}}\kappa(y_1)\mathfrak{f}\ \psi^{-1}(\alpha y_1)\ dy_1\ \otimes\ \varphi^{\mathcal{Y},\mathcal{R}}=0.
 \end{align}
 This along with \eqref{eq:def of varphi to take for tensor vanishing} imply that $\mathfrak{f}$ belongs to the space of $\kappa(Y,\psi(\alpha\cdot))$.

 Plugging \eqref{eq:def of varphi to take for tensor vanishing} into \eqref{eq:tensor vanishes to show} and changing variables leads to
\begin{align*}
\int_{\mathcal{Y}}\left(\int_{\mathcal{Y}}\int_{\mathcal{R}} \kappa(y)\mathfrak{f}\ \otimes\ \omega_{\psi}(yry_1^{-1})\varphi\ \psi^{-1}(\alpha y)\ dr\ dy\right)\ dy_1.
\end{align*}
We will show that the inner $drdy$-integration vanishes for all $y_1\in\mathcal{Y}$. Fix $y_1$. Since $\kappa|_{H_n}$ is trivial,
this inner integration equals
\begin{align}\label{eq:part 00}
\int_{\mathcal{Y}}\int_{\mathcal{R}}yr\cdot(\mathfrak{f}\otimes\omega_{\psi}(y_1^{-1})\varphi)\ \psi^{-1}(\alpha y)\ dr\ dy.
\end{align}
Again resorting to Claim~\ref{claim:Jacquet of Weil 1 dim},
\begin{align}\label{eq:part 1}
\omega_{\psi}(y_1^{-1})\varphi=c_{y_1}\varphi+\varphi_{y_1}^{\circ},
\end{align}
where $c_{y_1}\in\C$ and $\varphi_{y_1}^{\circ}$ belongs to the space of $\omega_{\psi}(R)$. Since $y_1$ varies in a compact subgroup,
there is a large enough $\mathcal{R}$ such that
\begin{align}\label{eq:part 2}
\int_{\mathcal{R}}\omega_{\psi}(r)\varphi_{y_1}^{\circ}\ dr=0,\qquad\forall y_1\in\mathcal{Y}.
\end{align}
Furthermore \eqref{eq:Y acts trivially relation to twisted Jacquet vanishing of theta} implies that for large
$\mathcal{Y}$ and $\mathcal{R}$,
\begin{align*}
\int_{\mathcal{Y}}\int_{\mathcal{R}} yr\cdot(\mathfrak{f}\otimes \varphi)\ \psi^{-1}(\alpha y)\ dr\ dy=0
\end{align*}
and then for any $c\in\C$,
\begin{align}\label{eq:part 3}
\int_{\mathcal{Y}}\int_{\mathcal{R}} yr\cdot(\mathfrak{f}\otimes c\varphi)\ \psi^{-1}(\alpha y)\ dr\ dy=0.
\end{align}
Combining \eqref{eq:part 1}-\eqref{eq:part 3} we conclude that for sufficiently large $\mathcal{R}$ and $\mathcal{Y}$,
the inner $drdy$-integration \eqref{eq:part 00} vanishes. Note the order of selecting the compact subgroups: first, choose $\mathcal{R}$ and $\mathcal{Y}$
which ensure \eqref{eq:part 3}, they depend only on $\mathfrak{f}$ and $\varphi$. Then, increase $\mathcal{R}$ to have \eqref{eq:part 2}, it will depend on $\varphi$ and $\mathcal{Y}$. This completes the proof of \eqref{eq:tensor vanishes to show} and thereby \eqref{eq:invariancy under Y to prove}. We conclude that $\kappa$ is trivial.
%\begin{align*}
%&\int_Oy_1\cdot\mathfrak{f}\ \psi(\alpha y_1)dy_1\ \otimes\ \int_O \omega_{\psi}(y_2)\varphi\ \psi(\beta y_2)dy_2=\\
%&\int_O\left(\int_O y_2\cdot\mathfrak{f}\ \otimes\ \omega_{\psi}(y_2)\omega_{\psi}(y_1^{-1})\ %\varphi\psi((\alpha+\beta)y_2)dy_2\right)\psi(-\beta y_1)dy_1.
%\end{align*}
%(It can be verified that $\ell(y[u,z]y^{-1})=\ell(y)\ell([u,z])\ell(y)^{-1}$, where $[u,z]$ and $y$ represent the matrices above.) Then %$Ker(\ell)\subset C_{U_{{n}}}$, $Ker(\ell)=Ker(\psi_k)$ and $\lmodulo{Ker(\ell)}{U_{{n}}}\isomorphic H_{{n}}$.
%Then $C_{H_{n}}=\setof{(0,0;z)}{z\in F}$.
%the elements of $\GL{{n}}$ are regarded as elements of $M_{{n}}$ according to $y\mapsto \mathrm{diag}(y,1,J_{{n}}\transpose{y}^{-1}J_{{n}})$.
%The cover of $SO_{2{n}+1}$ restricted to $St_k$ does not split. Because the restriction of the cover to ${\GL}{{n}}{\Adele}$ is a double %cover, the restriction to $St_k$ is also a double cover, which we denote $\widetilde{St}_k$.
\end{proof}

In the more general case, for arbitrary $k$, we are less precise.
\begin{proposition}\label{proposition:Jacquet module C and k}
There are exceptional representations $\theta^{\GL_{n-2k}}$ and $\theta^{G_{2k}}$ such that
$\theta_{C,\psi_k}$ is embedded in a finite direct sum of copies of the representation
\begin{align*}
\vartheta\otimes(\theta^{G_{2k}})_{C_{U_{2k}},\psi_k},\qquad
\vartheta=\begin{dcases}\theta^{\GL_{n}}&k=0,\\
\ind_{\widetilde{\GL}_{n-2k}^{\square}}^{\widetilde{\GL}_{n-2k}}((\theta^{\GL_{n-2k}})^{\square})&k>0.
\end{dcases}
%\xi(\ind_{\widetilde{\GL}_{n-2k}^{\square}}^{\widetilde{\GL}_{n-2k}}(\theta^{\GL_{n-2k}})^{\square})\otimes(\theta^{G_{2k}})_{C_{U_{2k}},\psi_k},
\end{align*}
Here $\vartheta\otimes(\theta^{G_{2k}})_{C_{U_{2k}},\psi_k}$ is
regarded as a representation of $\widetilde{\mathrm{St}}_{\psi_k}\ltimes U_n$ by extending it trivially on $U_{n-2k}$.
The representations $\theta^{\GL_{n-2k}}$ and $\theta^{G_{2k}}$ are related to
$\theta$ via \eqref{eq:containment of Jacquet module along maximal unipotent} and \eqref{eq:containment of Jacquet module along maximal unipotent 2}.
If $k=0$, the
embedding is in fact an isomorphism and there is only one summand, otherwise there are $[F^*:F^{*2}]$ summands.
\end{proposition}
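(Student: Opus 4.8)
The plan is to compute $\theta_{C,\psi_k}$ in stages, passing through the Levi $M_{n-2k}\isomorphic\GL_{n-2k}\times G_{2k}$ of the maximal parabolic $Q_{n-2k}$, so that \eqref{eq:containment of Jacquet module along maximal unipotent} and \eqref{eq:containment of Jacquet module along maximal unipotent 2} become available. In the $\SO_{2n+1}$-model, the decomposition $F^n=F^{n-2k}\oplus F^{2k}$ of the standard module of the Siegel Levi $\GL_n$ exhibits $C=C_{U_n}$ as a product of subgroups $C=C_0\cdot C_1$: here $C_0=C\cap U_{n-2k}$ is the part of $C$ lying in the unipotent radical of $Q_{n-2k}$, on which $\psi_k$ is trivial, while $C_1$ is exactly the center $C_{U_{2k}}$ of the Siegel radical of the factor $G_{2k}\subset M_{n-2k}$, on which $\psi_k$ restricts to the generic character $\psi_k$ of $C_{U_{2k}}$ (up to a harmless $\GL_{2k}$-conjugation). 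Since $C_1\subset M_{n-2k}$ normalizes $U_{n-2k}$, the product $\mathcal{U}:=U_{n-2k}\cdot C_1$ is a unipotent subgroup, $\psi_k$ extends to the character $\Psi$ of $\mathcal{U}$ which is $\psi_k$ on $C_1$ and trivial on $U_{n-2k}$, and one notes $[U_{n-2k},U_{n-2k}]\subset C_0$, so $U_{n-2k}/C_0$ is abelian.

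The core of the argument is the \emph{isomorphism} $\theta_{C,\psi_k}\isomorphic\theta_{\mathcal{U},\Psi}$. Granting it, computing $(-)_{\mathcal{U},\Psi}$ in stages (first $(-)_{U_{n-2k}}$, then $(-)_{C_1,\psi_k}$, legitimate since $C_1$ normalizes $U_{n-2k}$) gives $\theta_{C,\psi_k}\isomorphic(\theta_{U_{n-2k}})_{C_1,\psi_k}$, and applying the exact functor $(-)_{C_1,\psi_k}$ to the containment \eqref{eq:containment of Jacquet module along maximal unipotent} — or, when $k=0$, to the equality \eqref{eq:containment of Jacquet module along maximal unipotent 2} for the Levi $\GL_n\times G_0$ — yields an embedding, which is an isomorphism when $k=0$,
\begin{align*}
\theta_{C,\psi_k}\hookrightarrow\bigl(\mathcal{I}^{\square}(\theta^{\GL_{n-2k}},\theta^{G_{2k}})\bigr)_{C_1,\psi_k},
\end{align*}
with $\theta^{\GL_{n-2k}}$ and $\theta^{G_{2k}}$ the exceptional representations prescribed by \eqref{eq:containment of Jacquet module along maximal unipotent}, \eqref{eq:containment of Jacquet module along maximal unipotent 2}.

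Unwinding the right-hand side is then bookkeeping. By Lemma~\ref{lemma:induced representation composition factors}, $\mathcal{I}^{\square}(\theta^{\GL_{n-2k}},\theta^{G_{2k}})$ is $[F^*:F^{*2}]$ copies of $\ind_{p^{-1}(\GL_{n-2k}^{\square}\times G_{2k})}^{\widetilde{M}_{n-2k}}\bigl((\theta^{\GL_{n-2k}})^{\square}\otimes\theta^{G_{2k}}\bigr)$. Because $C_1\subset G_{2k}$ is unipotent and $\GL_{n-2k}$ centralizes it in the cover — in \eqref{eq:block-compatibility} the obstruction $(\Upsilon(c),\det a)_2$ is trivial since $\Upsilon(c)=1$ for $c\in C_1$ — the functor $(-)_{C_1,\psi_k}$ commutes with this finite-index normal induction and with the direct sum, so the right-hand side is $[F^*:F^{*2}]$ copies of $\ind_{\widetilde{\GL}_{n-2k}^{\square}}^{\widetilde{\GL}_{n-2k}}\bigl((\theta^{\GL_{n-2k}})^{\square}\bigr)\otimes(\theta^{G_{2k}})_{C_{U_{2k}},\psi_k}=\vartheta\otimes(\theta^{G_{2k}})_{C_{U_{2k}},\psi_k}$, which is the assertion for $k>0$. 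For $k=0$: $C_1=\{1\}$, $\mathcal{U}=U_n$, $\vartheta=\theta^{\GL_n}$, $(\theta^{G_0})_{C_{U_0},1}=\theta^{G_0}$, and \eqref{eq:containment of Jacquet module along maximal unipotent 2} being an equality makes the embedding an isomorphism with a single summand.

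The main obstacle is the isomorphism $\theta_{C,\psi_k}\isomorphic\theta_{\mathcal{U},\Psi}$, i.e. that the abelian group $U_{n-2k}/C_0$ acts on $\theta_{C,\psi_k}$ through its trivial quotient; this is the local shadow of ``cuspidality annihilates the non-generic Fourier coefficients'' from \cite{me7,me8}, and I would establish it exactly as in the proof of Theorem~\ref{theorem:small rep is weakly minimal}. Since $U_{n-2k}/C_0$ is abelian, by the Bernstein--Zelevinsky geometric lemma it suffices to show $(\theta_{C,\psi_k})_{U_{n-2k},\widetilde\chi}=0$ for every character $\widetilde\chi$ of $U_{n-2k}$ that is trivial on $C_0$ but nontrivial on $U_{n-2k}$; and, by transitivity of the Jacquet functor, this module equals the twisted Jacquet module $\theta_{\mathcal{U},\,\widetilde\chi\cdot\psi_k}$ of $\theta$ itself (the character being $\widetilde\chi$ on $U_{n-2k}$ and $\psi_k$ on $C_1$). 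The point, which must be verified in coordinates uniformly in $k$ — precisely the role played by the length computation around \eqref{eq:Y acts trivially relation to twisted Jacquet vanishing of theta} — is that such a $\widetilde\chi\cdot\psi_k$ always extends to a character of the abelian radical $U_1$ of nonzero length, whence $\theta_{\mathcal{U},\,\widetilde\chi\cdot\psi_k}=0$ by Theorem~\ref{theorem:main vanishing Jacquet of small rep}. Carrying out this single coordinate check is the only genuinely new work; everything else reduces to the Mackey-theoretic manipulations above together with the established formulas \eqref{eq:containment of Jacquet module along maximal unipotent}, \eqref{eq:containment of Jacquet module along maximal unipotent 2}.
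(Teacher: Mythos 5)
Your overall architecture is the same as the paper's: show that $U_{n-2k}$ acts trivially on $\theta_{C,\psi_k}$, then push the Jacquet functor through \eqref{eq:containment of Jacquet module along maximal unipotent}, Lemma~\ref{lemma:induced representation composition factors}, the Geometric Lemma and the commutation coming from \eqref{eq:block-compatibility}. That Mackey-theoretic unwinding is correct and matches the paper. The gap is in the step you yourself flag as ``the only genuinely new work'': the claim that every nontrivial character $\widetilde\chi$ of $U_{n-2k}/C_0$, combined with $\psi_k$, extends to a character of $U_1$ of nonzero length, so that Theorem~\ref{theorem:main vanishing Jacquet of small rep} kills everything in one stroke. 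This is false. Decompose $U_{n-2k}=V_k\rtimes Z_{n-2k,2k}$ with $V_k=U_{n-2k}\cap U_n$ and $Z_{n-2k,2k}=U_{n-2k}\cap M_n$. For characters supported on $V_k$ your argument works exactly as in the paper: after conjugating by $\GL_{n-2k}$ one may assume $\psi_k^{\star}(v)=\psi(v_{1,n+1})$, whose extensions to $U_1$ all have \nonzero\ length. But a nontrivial character $\mu$ of $Z_{n-2k,2k}$ lives inside the Levi $\GL_n$; most of $Z_{n-2k,2k}$ does not even lie in $U_1$, and the part that does contributes only to coordinates $(1,j)$ with $j\leq n$, which pair to length zero under $J_{2n-1}$ (and after the Weyl conjugation used in the paper the relevant character of $U_1$ is $\psi_1(u)=\psi(u_{1,2})$, again of length zero). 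So Theorem~\ref{theorem:main vanishing Jacquet of small rep} gives nothing for these characters.

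The paper closes exactly this hole with a two-stage argument (Claim~\ref{claim:Z_n-2k,k acts trivially}): first the local root-exchange lemma of Ginzburg--Rallis--Soudry (\cite{GRS5} Lemma~2.2) replaces the Jacquet module along $Z_{n-2k,2k}$ with character $\mu$ by one along $Z_{2,3,4}\rtimes E$; then, after a Weyl conjugation, one invokes Proposition~4 of \cite{BFG2} (extended to $G_n$ in \cite{me8}), which says $\theta_{U_1,\psi_1}$ is a quotient of $\theta_{U_2}$, so that $U_2$ acts trivially and the further twisted Jacquet module along $U_2'$ with the nontrivial character $\psi_2$ vanishes. That input is a consequence of Theorem~\ref{theorem:main vanishing Jacquet of small rep} only through a nontrivial intermediate result, not through a direct length computation. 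Your proof needs to import this entire mechanism for the $Z_{n-2k,2k}$ part; as written, the ``single coordinate check'' would fail there.
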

\begin{proof}[Proof of Proposition~\ref{proposition:Jacquet module C and k}]
For $n=1$, we have $C=U_1$ and $k=0$, whence $\theta_{C,\psi_k}=\theta_{U_1}$ and the result follows immediately
from \eqref{eq:containment of Jacquet module along maximal unipotent 2}.

Assume $n>1$. Further assume $k<n/2$, otherwise there is nothing to prove.
The main part of the proof is to show that $U_{n-2k}$ acts trivially on $\theta_{C,\psi_k}$. Of course, this holds for
$U_{n-2k}\cap C$. Let $V_k=U_{n-2k}\cap U_n$ and note that $Z_{n-2k,2k}=U_{n-2k}\cap M_n$. Clearly $U_{n-2k}=V_k\rtimes Z_{n-2k,2k}$.
The following claims imply that the action of $U_{n-2k}$ is trivial.
\begin{claim}\label{claim:V_k acts trivially}$\theta_{C,\psi_k}=\theta_{V_kC,\psi_{k}}$.
\end{claim}
\begin{claim}\label{claim:Z_n-2k,k acts trivially}$\theta_{V_kC,\psi_{k}}=\theta_{U_{n-2k}C,\psi_{k}}$.
\end{claim}
Before proving the claims, let us deduce the proposition. Clearly
$\theta_{U_{n-2k}C,\psi_{k}}=(\theta_{U_{n-2k}})_{C_{U_{2k}},\psi_{k}}$. Assume $k>0$. Then
by \eqref{eq:containment of Jacquet module along maximal unipotent},
\begin{align}\label{eq:equality to correct if k = 0}
\theta_{U_{n-2k}C,\psi_{k}}\subset\mathcal{I}^{\square}(\theta^{\GL_{n-2k}},\theta^{G_{2k}})_{C_{U_{2k}},\psi_k}.
\end{align}
According to Lemma~\ref{lemma:induced representation composition factors}, $\mathcal{I}^{\square}(\theta^{\GL_{n-2k}},\theta^{G_{2k}})$ is the finite direct sum of $[F^*:F^{*2}]$ copies of
\begin{align*}
\ind_{p^{-1}(\GL_{n-2k}^{\square}\times G_{2k})}^{\widetilde{M}_{n-2k}}((\theta^{\GL_{n-2k}})^{\square}\otimes\theta^{G_{2k}}).
\end{align*}
Let $\mathrm{St}'_{\psi_k}$ be the stabilizer of $\psi_k$ in $M_{2k}$, when $\psi_k$ is regarded as a character of $C_{U_{2k}}$ and $M_{2k}<G_{2k}<G_n$.
The double coset space
\begin{align*}
\rmodulo{\lmodulo{\GL_{n-2k}^{\square}\times G_{2k}}{M_{n-2k}}}{\GL_{n-2k}\times\mathrm{St}'_{\psi_k}}
\end{align*}
is trivial. Then by virtue of the Geometric Lemma of Bernstein and Zelevinsky \cite{BZ2} (Theorem~5.2),
\begin{align*}
\ind_{p^{-1}(\GL_{n-2k}^{\square}\times G_{2k})}^{\widetilde{M}_{n-2k}}((\theta^{\GL_{n-2k}})^{\square}\otimes\theta^{G_{2k}})_{C_{U_{2k}},\psi_k}
=\ind_{p^{-1}(\GL_{n-2k}^{\square}\times \mathrm{St}'_{\psi_k})}^{p^{-1}(\GL_{n-2k}\times \mathrm{St}'_{\psi_k})}((\theta^{\GL_{n-2k}})^{\square}\otimes(\theta^{G_{2k}})_{C_{U_{2k}},\psi_k}).
\end{align*}
Equality~\eqref{eq:block-compatibility} implies the subgroups $\widetilde{\GL}_{n-2k}$ and $\widetilde{\Sp}_k$ commute. Therefore
\begin{align*}
\ind_{p^{-1}(\GL_{n-2k}^{\square}\times \mathrm{St}'_{\psi_k})}^{p^{-1}(\GL_{n-2k}\times \mathrm{St}'_{\psi_k})}((\theta^{\GL_{n-2k}})^{\square}\otimes(\theta^{G_{2k}})_{C_{U_{2k}},\psi_k})
=(\ind_{\widetilde{\GL}_{n-2k}^{\square}}^{\widetilde{\GL}_{n-2k}}(\theta^{\GL_{n-2k}})^{\square})\otimes(\theta^{G_{2k}})_{C_{U_{2k}},\psi_k}.
\end{align*}
Thus $\mathcal{I}^{\square}(\theta^{\GL_{n-2k}},\theta^{G_{2k}})_{C_{U_{2k}},\psi_k}$ is the direct sum of representations $\vartheta\otimes(\theta^{G_{2k}})_{C_{U_{2k}},\psi_k}$.
%\begin{align*}
%(\ind_{\widetilde{\GL}_{n-2k}^{\square}}^{\widetilde{\GL}_{n-2k}}(\theta^{\GL_{n-2k}})^{\square})\otimes(\theta^{G_{2k}})_{C_{U_{2k}},\psi_k}.
%\end{align*}
The proposition follows from this. Note that for $k=0$,
$\theta_{U_{n-2k}C,\psi_{k}}=\theta_{U_{n}}$ and we can apply \eqref{eq:containment of Jacquet module along maximal unipotent 2} instead of
\eqref{eq:containment of Jacquet module along maximal unipotent}, then
\eqref{eq:equality to correct if k = 0} becomes $\theta_{U_{n}C}=\theta^{\GL_{n}}\otimes\theta^{G_{0}}$.
%
%the \lhs\ of \eqref{eq:equality to correct if k = 0} becomes
%$\theta_{U_{n}}$
%$\theta_{U_{n}C}=\theta^{\GL_{n}}\otimes\theta^{G_{0}}$ and we can use \eqref{eq:containment of Jacquet module along maximal unipotent 2}.
\begin{proof}[Proof of Claim~\ref{claim:V_k acts trivially}]
A character $\psi_k^{\star}$ of $V_kC$ extending $\psi_k$ is defined by its restriction to the nontrivial coordinates on the $(n+1)$-th column of $v\in V_k$. We call $\psi_k^{\star}$ nontrivial if this restriction is nontrivial.
We prove that the Jacquet module of $\theta_{C,\psi_k}$ with respect to $V_k$ and $\psi^{\star}$ vanishes for any nontrivial $\psi^{\star}$. The group $\GL_{n-2k}<\mathrm{St}_{\psi_k}$ acts transitively on these characters. Therefore, it is enough to show
\begin{align*}
(\theta_{C,\psi_k})_{V_k,\psi_{k}^{\star}}=\theta_{V_kC,\psi_{k}^{\star}}=0,\qquad \psi_k^{\star}(v)=\psi(v_{1,n+1}), \quad v\in V_k.
\end{align*}
This follows immediately from Theorem~\ref{theorem:main vanishing Jacquet of small rep}, because any character of $U_1$ extending $\psi_{k}^{\star}|_{U_1}$ has a nonzero length. Thus $\theta_{C,\psi_k}=\theta_{V_kC,\psi_{k}}$.
\end{proof}

\begin{proof}[Proof of Claim~\ref{claim:Z_n-2k,k acts trivially}]
For $k=0$ there is nothing to prove ($V_0=U_n)$. Assume $k>0$.
The claim follows once we show that for any nontrivial character $\mu$ of $Z_{n-2k,2k}$,
\begin{align}\label{eq:lemma first vanishing k < n/2 action of Z}
(\theta_{V_kC,\psi_k})_{Z_{n-2k,2k},\mu}=0.
\end{align}
The group
$Z_{n-2k,2k}$ is abelian and $\GL_{n-2k}\times \Sp_k$ acts on the characters of $Z_{n-2k,2k}$. Write an element $z\in Z_{n-2k,2k}$ in the form
\begin{align*}
z=z(z_1,z_2,z_3,z_4)=\left(\begin{array}{cccc}I_{n-2k-1}&&z_1&z_2\\&1&z_3&z_4\\&&1\\&&&I_{2k-1}\end{array}\right).
\end{align*}
We may assume that $\mu$ does not depend on the coordinates of $z_1$ and $z_4$, and depends on $z_3$. For simplicity, also assume
$\mu(z(0,0,z_3,0))=\psi(z_3)$.

We use the local analog of ``exchanging roots", proved by Ginzburg, Rallis and Soudry \cite{GRS5} (Lemma~2.2). (For the global setting see \cite{G,GRS3,Soudry5,RGS}.) Let $Z_{1}<Z_{n-2k,2k}$ be the subgroup of elements $z(z_1,0,0,0)$ and $Z_{2,3,4}<Z_{n-2k,2k}$ be the subgroup
consisting of elements $z(0,z_2,z_3,z_4)$. Clearly $Z_{n-2k,2k}=Z_{1}\cdot Z_{2,3,4}$ (a direct product). Also consider the subgroup
\begin{align*}
E=\left\{\left(\begin{array}{ccc}I_{n-2k-1}\\e&1\\&&I_{2k}\end{array}\right)\right\}.
\end{align*}
In general, if $\pi$ is a smooth representation of $Z_{n-2k,2k}\rtimes E$, then by \cite{GRS5} (Lemma~2.2), as $Z_{2,3,4}$-modules
\begin{align*}
\pi_{Z_{n-2k,2k},\mu}=\pi_{Z_{2,3,4}\rtimes E,\mu}.
\end{align*}
Indeed, it is simple to check that the list of properties stated in the lemma are satisfied in this setting (in the notation
of \cite{GRS5}, $C=Z_{2,3,4}$, $X=Z_1$ and $Y=E$).
\begin{comment}
Indeed, it is simple to check that the list of properties stated in the lemma are satisfied in this setting. Namely,
\begin{enumerate}
\item $Z_{1},Z_{2,3,4}$ and $E$ subgroups of a unipotent subgroup $A$ of $\GL_n$.
\item $Z_{1}$ and $E$ are abelian, normalize $Z_{2,3,4}$ and fix $\mu$.
\item $z_1^{-1}e^{-1}z_1e\in Z_{2,3,4}$ for all $z\in Z_{1}$ and $e\in E$.
\item $A=Z_{n-2k,2k}\rtimes E=(Z_{2,3,4}E)\rtimes Z_1$.
\item The set $z_1\mapsto\mu(z_1^{-1}e^{-1}z_1e)$ as $e$ varies in $E$ is the group of characters of $Z_1$.
\item If $z_1=\mathrm{exp}(L)$ and $e=\mathrm{exp}(M)$, where $L$ and $M$ belong to the Lie algebras $\mathcal{Z}_1$ and $\mathcal{E}$ of $Z_1$ and $E$, then
$\mu(z_1^{-1}e^{-1}z_1e)=\psi((L,M))$. Here $(,)$ is a non degenerate, bilinear pairing between $\mathcal{Z}_1$ and $\mathcal{E}$.
\end{enumerate}
\begin{remark}
Lemma~2.2 of \cite{GRS5} was stated for unipotent subgroups of symplectic groups, but the arguments are general and hold in our setting.
\end{remark}
\end{comment}
\begin{remark}
Lemma~2.2 of \cite{GRS5} was stated for unipotent subgroups of symplectic groups, but the arguments are general and hold in our setting.
See also Section~2.3 of \cite{GRS5}.
\end{remark}
%Since $U_{n-2k}=(U_{n-2k}\cap M_n)\ltimes(U_{n-2k}\cap U_n)=Z_{n-2k,2k}\ltimes V_k$,
It follows that as $Z_{2,3,4}$-modules
\begin{align*}
(\theta_{V_kC,\psi_k})_{Z_{n-2k,2k},\mu}=\theta_{(V_kC)\rtimes(Z_{2,3,4}E),\psi_k\mu}.
\end{align*}
Conjugating the \rhs\ by a Weyl element of $G_n$, whose
action on $N_n$ is given by the action of
\begin{align*}
\mathrm{diag}(\left(\begin{array}{cc}&I_{2k+1}\\I_{n-2k-1}\end{array}\right),1,
\left(\begin{array}{cc}&I_{n-2k-1}\\I_{2k+1}\end{array}\right)),
\end{align*}
we obtain that $\theta_{(V_kC)\rtimes(Z_{2,3,4}E),\psi_k\mu}$ is a quotient of
\begin{align*}
(\theta_{U_1,\psi_1})_{U_2',\psi_2}.
\end{align*}
Here $\psi_1(u)=\psi(u_{1,2})$, $U_2'$ is a certain subgroup of $U_2$ (obtained from the conjugation of $C$) and $\psi_2(u)=\psi(u_{2,2n-1})$. Note that $\psi_1$ corresponds
to $\mu$ and the coordinate $z_3$ while $\psi_2$ corresponds to the
character $\psi_k$ of $C$ and the $(n-2k+1,n+2k)$-th coordinate of $c\in C$. The character $\psi_2$ is nontrivial on $U_2'$. Finally, by Proposition~4 of Bump, Friedberg and Ginzburg \cite{BFG2} (which is easily extended to $G_n$, given the analog of Theorem~\ref{theorem:main vanishing Jacquet of small rep} in \cite{me8}),
$\theta_{U_1,\psi_1}$ is a quotient of $\theta_{U_2}$ (this is valid for $n\geq 3$, here $0<k<n/2$ whence $n\geq3$). Hence, $U_2$ acts trivially on
$\theta_{U_1,\psi_1}$ and therefore $(\theta_{U_1,\psi_1})_{U_2',\psi_2}=0$ and \eqref{eq:lemma first vanishing k < n/2 action of Z} follows.
\end{proof}
\end{proof}

\section{Distinguished representations}\label{section:Distinguished representations}
Let $G$ be either $\GL_n$ or $G_n$. Let $\tau$ be an admissible representation of $G$ with a central character $\omega_{\tau}$.
Assume that $\theta$ and $\theta'$ are a pair of exceptional representations of $\widetilde{G}$.
We say that $\tau$ is $(\theta,\theta')$-distinguished if
\begin{align*}
\Hom_{G}(\theta\otimes\theta',\tau^{\vee})\ne0.
\end{align*}
The following result describes the upper hereditary property of a distinguished representation of $\GL_n$, when induced to a representation of
$G_n$. Following the notation of Section~\ref{subsection:The exceptional representations},
we denote the exceptional representation of $\widetilde{G}$ corresponding to $\chi$ and $\psi$ by
$\theta^G_{\chi,\psi}$. %For any representation $\sigma$ of $\GL_n$ and $s\in\C$, %denote by $\sigma|\det{}|^s$ the representation of $\GL_n$ obtained by twisting $\sigma$.
%Given a character $\mu$ of $G_0$, one forms a representation $\sigma|\det{}|^s\otimes\mu$ of $M_n$. Put
For any representation $\sigma$ of $\GL_n$, $s\in\C$ and a character $\mu$ of $F^*$, one forms a representation $\sigma|\det{}|^s\otimes\mu$ of $M_n$. Put
\begin{align*}
\mathrm{I}(\sigma,s,\mu)=\Ind_{Q_n}^{G_n}(\delta_{Q_n}^{1/2}\sigma|\det{}|^s\otimes\mu).
\end{align*}
\begin{proposition}\label{proposition:upper heredity of dist 1}
Let $\tau$ be a $(\theta^{\GL_n}_{\chi,\psi},\theta^{\GL_n}_{\chi',\psi'})$-distinguished representation of $\GL_n$ and set
$\eta=(\chi\chi')^{-1}$. Then $\mathrm{I}(\tau,1/2,\eta)$ is a
$(\theta^{G_n}_{\chi,\psi},\theta^{G_n}_{\chi',\psi'})$-distinguished representation of $G_n$.
\end{proposition}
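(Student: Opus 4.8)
The strategy is to produce a nonzero element of $\Hom_{G_n}(\theta^{G_n}_{\chi,\psi}\otimes\theta^{G_n}_{\chi',\psi'}, \mathrm{I}(\tau,1/2,\eta)^{\vee})$ by integrating a $\GL_n$-invariant trilinear form for $\tau$ against matrix coefficients over $Q_n\backslash G_n$, mirroring the global unfolding of the co-period in \cite{me7,me8}. First I would use Frobenius reciprocity to rewrite the target Hom-space. Since $\mathrm{I}(\tau,1/2,\eta)=\Ind_{Q_n}^{G_n}(\delta_{Q_n}^{1/2}\tau|\det{}|^{1/2}\otimes\eta)$, its contragredient is (up to the usual identifications) $\Ind_{Q_n}^{G_n}(\delta_{Q_n}^{1/2}\tau^{\vee}|\det{}|^{-1/2}\otimes\eta^{-1})$, and a $G_n$-map from $\theta\otimes\theta'$ into this induced space corresponds by reciprocity to a $Q_n$-map from the Jacquet module $(\theta\otimes\theta')_{U_n}$ (the $U_n$-coinvariants, suitably twisted) to $\delta_{Q_n}^{1/2}\tau^{\vee}|\det{}|^{-1/2}\otimes\eta^{-1}$ as an $M_n$-representation. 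So the problem reduces to understanding $(\theta^{G_n}_{\chi,\psi}\otimes\theta^{G_n}_{\chi',\psi'})_{U_n}$ as a module over $\widetilde M_n$, i.e. essentially over $\widetilde{\GL}_n$ (the $G_0$ factor acting by a central character).

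The key input is the Jacquet module computation \eqref{eq:containment of Jacquet module along maximal unipotent 2}: for each factor, $(\theta^{G_n}_{\chi,\psi})_{U_n}=\theta^{\GL_n}_{|\cdot|^{(n-1)/4}\chi,\psi}\otimes\theta^{G_0}_{\chi,1}$, and likewise for the primed factor. Taking the tensor product over $G_n$ and passing to $U_n$-coinvariants — using that $U_n$ is abelian-modulo-center and that the two genuine representations pair to a representation of the linear group $G_n$ — one finds that $(\theta\otimes\theta')_{U_n}$ is a quotient (or subquotient) built from $\theta^{\GL_n}_{|\cdot|^{(n-1)/4}\chi,\psi}\otimes\theta^{\GL_n}_{|\cdot|^{(n-1)/4}\chi',\psi'}$ as a $\GL_n$-representation, times the central character contribution $\theta^{G_0}_{\chi,1}\otimes\theta^{G_0}_{\chi',1}$, which accounts for the character $\eta=(\chi\chi')^{-1}$ on the $G_0\cong\GL_1$ part and for the $\delta_{Q_n}^{1/2}|\det{}|^{1/2}$ normalization once one tracks the power $|\cdot|^{(n-1)/4}$ against $\delta_{Q_n}$ and the two half-powers from the two copies. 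Since $\tau$ is $(\theta^{\GL_n}_{\chi,\psi},\theta^{\GL_n}_{\chi',\psi'})$-distinguished, i.e. $\Hom_{\GL_n}(\theta^{\GL_n}_{\chi,\psi}\otimes\theta^{\GL_n}_{\chi',\psi'},\tau^{\vee})\ne0$, twisting by $|\cdot|^{(n-1)/4}$ in each factor converts this into a nonzero map out of the $\GL_n$-part of $(\theta\otimes\theta')_{U_n}$ into the appropriate twist of $\tau^{\vee}$; composing with the reciprocity isomorphism yields the desired nonzero element of \eqref{eq:homspace dist2}.

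\textbf{Main obstacle.} The delicate point is bookkeeping of normalizations and of the metaplectic structure: the Jacquet module in \eqref{eq:containment of Jacquet module along maximal unipotent 2} is a genuine $\widetilde{\GL}_n\times\widetilde G_0$-representation with a specific exceptional character, and forming $\theta\otimes\theta'$ requires that the genuine and anti-genuine structures match on $\mu_2$ and, for odd $n$, that the two $\widetilde{\GL}_n$-factors genuinely commute so that $\theta^{\GL_n}_{\chi,\psi}\otimes\theta^{\GL_n}_{\chi',\psi'}$ is honestly a $\GL_n$-representation to which the distinction hypothesis applies. One must verify that the exceptional characters produced on the right side of \eqref{eq:containment of Jacquet module along maximal unipotent 2} are precisely $\theta^{\GL_n}_{\chi,\psi}$ and $\theta^{\GL_n}_{\chi',\psi'}$ up to the explicit twist $|\cdot|^{(n-1)/4}$ recorded after \eqref{eq:containment of Jacquet module along maximal unipotent 2} (using $C_{\widetilde{T}_{n+1}}=C_{\widetilde{T}_{\GL_n}}\widetilde G_0$ and \eqref{eq:explicit construction exceptional characters}), and that all the $|\det{}|$-powers — $\delta_{Q_n}^{1/2}$ on the induced side, $|\det{}|^{1/2}$ in $\mathrm{I}(\tau,1/2,\eta)$, $|\cdot|^{(n-1)/4}$ from each Jacquet module, and the exponent shift in passing to contragredients — cancel to leave exactly $\tau^{\vee}$ with the central character forcing the $\GL_1$-twist to be $\eta=(\chi\chi')^{-1}$. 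Once this accounting is done, the construction of the explicit intertwining map via the integral over $Q_n(F)\backslash G_n(F)$ (convergence being unproblematic here since one only needs existence of a nonzero functional, which the reciprocity argument already supplies) completes the proof.
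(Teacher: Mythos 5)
Your argument is correct, but it is not the route the paper takes for this proposition. The paper works in the opposite adjunction direction: it uses \eqref{eq:containment of Jacquet module along maximal unipotent 2} together with Frobenius reciprocity to embed each of $\theta^{G_n}_{\chi,\psi}$, $\theta^{G_n}_{\chi',\psi'}$ as a \emph{subrepresentation} of $\Ind_{\widetilde{Q}_n}^{\widetilde{G}_n}(\delta_{Q_n}^{(n-1)/(4n)}\theta^{\GL_n}_{\chi,\psi}\otimes\theta^{G_0}_{\chi,1})$, realizes vectors as functions on $G_n$, and then explicitly constructs the trilinear form $T(f,\varphi,\varphi')=\int_{Q_n\backslash G_n}L(f(g),\varphi(g),\varphi'(g))\,dg$, proving $T\ne0$ by a support argument near the identity (this is the local analogue of the co-period integral). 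Your route instead applies Frobenius reciprocity to $\Hom_{G_n}(\theta\otimes\theta',\mathrm{I}(\tau,1/2,\eta)^{\vee})$ to reduce to an $M_n$-Hom out of $(\theta\otimes\theta')_{U_n}$, and then observes that the twist of $\theta^{\GL_n}_{\chi,\psi}\otimes\theta^{\GL_n}_{\chi',\psi'}$ coming from \eqref{eq:containment of Jacquet module along maximal unipotent 2} receives the distinguishing functional. This is exactly the mechanism the paper deploys later, in the proof of Proposition~\ref{proposition:one-dim} (where $\mathbb{W}_0=\mathrm{d}^{(n-1)/2}\theta_0\otimes\theta_0'$ appears as a quotient of $(\theta\otimes\theta')_{U_n}$), so your approach is sound and in some sense cleaner for bare nonvanishing, since it avoids both the convergence discussion and the explicit support argument; what it gives up is the concrete integral formula for the invariant form, which is what connects to the global unfolding. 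One point where you should be more careful: the direction of the quotient matters. The Jacquet module of the tensor is \emph{not} the tensor of the Jacquet modules; rather, the universal property of $U_n$-coinvariants gives a surjection $(\theta\otimes\theta')_{U_n}\twoheadrightarrow\theta_{U_n}\otimes\theta'_{U_n}$, and it is precisely this direction that lets you compose a nonzero functional on the target with the surjection. Your phrase ``a quotient (or subquotient) built from'' leaves this ambiguous, and the argument would collapse if the arrow went the other way.
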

\begin{proof}[Proof of Proposition~\ref{proposition:upper heredity of dist 1}]
By definition the space
\begin{align*}
\Tri_{\GL_n}(\tau,\theta^{\GL_n}_{\chi,\psi},\theta^{\GL_n}_{\chi',\psi'})
\end{align*}
of $\GL_n$-equivariant trilinear forms on $\tau\times\theta^{\GL_n}_{\chi,\psi}\times\theta^{\GL_n}_{\chi',\psi'}$ is nonzero. Therefore
\begin{align}\label{space:heredity proof lower space}
\Tri_{M_n}(\tau\otimes\eta,\theta^{\GL_n}_{\chi,\psi}\otimes\theta^{G_0}_{\chi,1},\theta^{\GL_n}_{\chi',\psi'}\otimes\theta^{G_0}_{\chi',1})\ne0.
\end{align}

According to \eqref{eq:containment of Jacquet module along maximal unipotent 2},
\begin{align*}
(\theta_{\chi,\psi}^{G_n})_{U_n}=\theta^{\GL_n}_{|\cdot|^{(n-1)/4}\chi,\psi}\otimes\theta^{G_0}_{\chi,1}.
\end{align*}
Applying Frobenius reciprocity we see that $\theta_{\chi,\psi}^{G_n}$ is a subrepresentation of
\begin{align}\label{eq:tensor in heredity proof 0}
\Ind_{\widetilde{Q}_{n}}^{\widetilde{G}_n}(\delta_{Q_{n}}^{\frac{n-1}{4n}}\theta^{\GL_n}_{\chi,\psi}\otimes\theta^{G_0}_{\chi,1}).
\end{align}
A similar result holds for $\theta^{G_n}_{\chi',\psi'}$.

We define
\begin{align*}
T\in\Tri_{G_n}(\mathrm{I}(\tau,1/2,\eta),\theta^{G_n}_{\chi,\psi},\theta^{G_n}_{\chi',\psi'})
\end{align*}
and prove it is nonzero.
Let $\varphi$ belong to the space $\theta^{G_n}_{\chi,\psi}$, regarded as an element of \eqref{eq:tensor in heredity proof 0}, and similarly let $\varphi'$ belong to the space of $\theta^{G_n}_{\chi',\psi'}$. Also take $f$ in the space of $\mathrm{I}(\tau,1/2,\eta)$.
Now if $L\ne0$ belongs to \eqref{space:heredity proof lower space},
\begin{align*}
L(f(q),\varphi(q),\varphi'(q))=\delta_{Q_n}(q)L(f(1),\varphi(1),\varphi'(1)),\qquad q\in Q_n.
\end{align*}
Thus the following integral is (formally) well defined (see e.g. \cite{BZ1} 1.21),
\begin{align*}
T(f,\varphi,\varphi')=\int_{\lmodulo{Q_n}{G_n}}L(f(g),\varphi(g),\varphi'(g))\ dg.
\end{align*}
It is absolutely convergent according to the Iwasawa decomposition.
Since $T$ satisfies the necessary equivariance properties, it remains to show $T\ne0$.
Assume $L(x,y,y')\ne0$ for suitable data. Take $f$ supported on $Q_{n}\mathcal{N}$,
for a small compact open neighborhood $\mathcal{N}$ of the identity in $G_n$, and such that
\begin{align}
f((a,b)uv)=\delta_{Q_{n}}^{1/2}(a)|\det{a}|^{1/2}\eta(b)\tau(a)x,\qquad\forall (a,b)\in \GL_n\times G_0, u\in U_{n}, v\in\mathcal{N}.
\end{align}
We may assume $\varphi(1)=y$ (because $\theta^{\GL_n}_{\chi,\psi}\otimes\theta^{G_0}_{\chi,1}$ is irreducible) and $\varphi'(1)=y'$.
Using the Iwasawa decomposition and $Q_{n}\mathcal{N}\cap K=(Q_{n}\cap K)\mathcal{N}$ then yields
\begin{align*}
T(f,\varphi,\varphi')=
\int_{(Q_{n}\cap K)\mathcal{N}}L(f(k),\varphi(k),\varphi'(k))\ dk.
\end{align*}
Since $L$ is invariant with respect to $Q_{n}\cap K$, taking a sufficiently small $\mathcal{N}$
(with respect to $\varphi$ and $\varphi'$), the $dk$-integration reduces to a nonzero constant multiple of
$L(f(1),\varphi(1),\varphi'(1))$, which is nonzero.
We conclude that $\mathrm{I}(\tau,1/2,\eta)$ is $(\theta^{G_n}_{\chi,\psi},\theta^{G_n}_{\chi',\psi'})$-distinguished.
\end{proof}

Let $\tau$ be a representation of $G$ as above.
Write $\theta=\theta_{\chi,\psi}$ and $\theta'=\theta_{\chi',\psi'}$. Since $\theta_{\chi,\psi}=\chi\theta_{1,\psi}$,
we may assume $\chi=\chi'=1$, perhaps twisting $\tau$ by a character. For simplicity, we then say that $\tau$ is $(\psi,\psi')$-distinguished.

If $n$ is even, the characters $\psi$ and $\psi'$ can be ignored, because
$\theta_{1,\psi}$ does not depend on $\psi$. If $n$ is odd and $\tau$ is $(\psi_0,\psi_0')$-distinguished,
then for any $\psi$ there is $\psi'$ such that $\tau$ is $(\psi,\psi')$-distinguished. Indeed,
write $\psi(x)=\psi_0(\alpha x)$ for some $\alpha\in F^*$ and put $\psi'(x)=\psi_0'(\alpha x)$, then by
Claim~\ref{claim:taking characters out of exceptional} and its proof,
$\theta_{1,\psi_0}\otimes\theta_{1,\psi_0'}=\theta_{1,\psi}\otimes\theta_{1,\psi'}$.

In light of these observations, we say that
$\tau$ is distinguished if for any $\psi$ there is $\psi'$ such that $\tau$ is $(\psi,\psi')$-distinguished. Proposition~\ref{proposition:upper heredity of dist 1} implies,
\begin{corollary}\label{corollary:upper hered dist}
Let $\tau$ be a distinguished representation of $\GL_n$. Then $\mathrm{I}(\tau,1/2,1)$ is distinguished.
\end{corollary}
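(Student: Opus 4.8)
The plan is to obtain the corollary as an immediate application of Proposition~\ref{proposition:upper heredity of dist 1}, run once for each additive character. First I would unwind the relevant definitions. By the convention fixed just before the corollary, saying that $\tau$ is distinguished over $\GL_n$ means: for every nontrivial additive character $\psi$ of $F$ there exists a nontrivial additive character $\psi'$ with $\tau$ being $(\psi,\psi')$-distinguished, i.e.\ $\Hom_{\GL_n}(\theta^{\GL_n}_{1,\psi}\otimes\theta^{\GL_n}_{1,\psi'},\tau^{\vee})\ne0$. Likewise, to prove that $\mathrm{I}(\tau,1/2,1)$ is distinguished over $G_n$ it suffices to exhibit, for each such $\psi$, some $\psi'$ with $\Hom_{G_n}(\theta^{G_n}_{1,\psi}\otimes\theta^{G_n}_{1,\psi'},\mathrm{I}(\tau,1/2,1)^{\vee})\ne0$.

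With this in hand I would fix an arbitrary $\psi$, pick $\psi'$ as supplied by the hypothesis, and feed the pair into Proposition~\ref{proposition:upper heredity of dist 1} with $\chi=\chi'=1$. Then $\eta=(\chi\chi')^{-1}=1$, so $\mathrm{I}(\tau,1/2,\eta)$ is literally $\mathrm{I}(\tau,1/2,1)$, and the proposition asserts precisely that it is $(\theta^{G_n}_{1,\psi},\theta^{G_n}_{1,\psi'})$-distinguished. As $\psi$ was arbitrary, $\mathrm{I}(\tau,1/2,1)$ is distinguished over $G_n$, which is the assertion of the corollary.

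I do not expect any real obstacle here: the argument is pure bookkeeping. The only points that need a word of care are that the pair $(\psi,\psi')$ one hands to the proposition is exactly the one the $\GL_n$-definition of distinction produces --- which is automatic --- and that the normalizing twist $\eta$ of the proposition is trivial, which holds once the exceptional representations are normalized to have trivial determinantal character, a harmless reduction by Claim~\ref{claim:taking characters out of exceptional}. For even $n$ the characters play no role at all. For odd $n$ one keeps in mind that it is the pair $(\psi,\psi')$, not $\psi$ alone, that is being tracked; but this is precisely what the ``for any $\psi$ there is $\psi'$'' formulation of distinction, together with the compatibility $\theta_{1,\psi}\otimes\theta_{1,\psi'}=\theta_{1,\psi_0}\otimes\theta_{1,\psi_0'}$ noted before the corollary, is designed to accommodate.
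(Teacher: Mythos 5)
Your proposal is correct and is exactly the argument the paper intends: the corollary is stated as an immediate consequence of Proposition~\ref{proposition:upper heredity of dist 1}, applied with $\chi=\chi'=1$ (so $\eta=1$) to the pair $(\psi,\psi')$ furnished by the definition of distinction, for each $\psi$. The paper gives no further proof, so your bookkeeping matches its reasoning precisely.
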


%\section{Proof of Theorem~\ref{theorem:tensor of small is non generic}}\label{section:proof of thm tensor of small is non generic}
%For $n=1$ the cover $\widetilde{G}_n$ splits because $\theta$ does not have a Whittaker model,
%$\theta_{U_n}=\theta$. Now assume $n>1$.
Now we prove Theorem~\ref{theorem:tensor of small is non generic}. Namely, for any pair $\theta$ and $\theta'$ of exceptional representations of $\widetilde{G}_n$,
and a generic character $\psi$ of $N_n$,
\begin{align*}%\label{eq:tensor of small is non generic}
(\theta\otimes\theta')_{N_n,\psi}=0.
\end{align*}
We consider the filtrations of $\theta$ and $\theta'$ corresponding to the Jacquet functor along $C=C_{U_n}$.
The kernel of this functor is glued from representations induced from the Jacquet modules described in Section~\ref{section:Heisenberg jacquet modules}.
Taking the twisted Jacquet functor along $N_n$ truncates some of these quotients and, essentially, reduces the problem to a representation induced from
$(\theta^{G_{2k}})_{C_{U_{2k}},\psi_{k}}\otimes({\theta'}^{G_{2k}})_{C_{U_{2k}},\psi_{k}^{-1}}$.
Theorem~\ref{theorem:small rep is weakly minimal} then enables us to further reduce the problem, to the vanishing of
$\ind_{\Sp_k U_n}^{\GL_{2k}}(\omega_{\psi}\otimes\omega_{\psi^{-1}})_{N_n,\psi}$, which essentially
follows from the results of Offen and Sayag on Klyachko models (\cite{OS}, see also \cite{Klyachko}).
\begin{proof}[Proof of Theorem~\ref{theorem:tensor of small is non generic}]
By an analog of the Geometric Lemma of Bernstein and Zelevinsky (\cite{BZ2} Theorem~5.2 and \cite{BZ1} 5.9-5.12), as a $\widetilde{Q}_n$-module, $\theta$ is glued from
\begin{align*}
\ind_{\widetilde{\mathrm{St}}_{\psi_k}U_n}^{\widetilde{Q}_n}(\theta_{C,\psi_k}), \qquad 0\leq k\leq \lfloor n/2 \rfloor.
\end{align*}
(See Section~\ref{section:Heisenberg jacquet modules} for the notation.)
Then
as a $Q_n$-module $\theta\otimes\theta'$ is glued from
\begin{align}\label{eq:filtration quotients}
\ind_{\widetilde{\mathrm{St}}_{\psi_k}U_n}^{\widetilde{Q}_n}(\theta_{C,\psi_k})\otimes
\ind_{\widetilde{\mathrm{St}}_{\psi_{k'}}U_n}^{\widetilde{Q}_n}(\theta'_{C,\psi_{k'}^{-1}}),\qquad 0\leq
k,k'\leq\lfloor n/2\rfloor.
\end{align}
We prove
that the Jacquet functor with respect to $N_n$ and $\psi$ vanishes on each of these representations.

Since functions in $\ind_{\widetilde{\mathrm{St}}_{\psi_k}U_n}^{\widetilde{Q}_n}(\theta_{C,\psi_k})$ are compactly supported modulo $\widetilde{\mathrm{St}}_{\psi_k}U_n$, and
$C$ is normal in $Q_n$, by Lemma~\ref{lemma:Jacquet kernel as integral},
\begin{align}\label{eq:123 k and k'}
(\ind_{\widetilde{\mathrm{St}}_{\psi_k}U_n}^{\widetilde{Q}_n}(\theta_{C,\psi_k})\otimes
\ind_{\widetilde{\mathrm{St}}_{\psi_{k'}}U_n}^{\widetilde{Q}_n}(\theta'_{C,\psi_{k'}^{-1}}))_{N_n,\psi}=\begin{dcases}
(\ind_{\widetilde{\mathrm{St}}_{\psi_k}U_n}^{\widetilde{Q}_n}(\theta_{C,\psi_k}\otimes\theta'_{C,\psi_{k}^{-1}}))_{N_n,\psi}&k=k',\\0&k\ne k'.
\end{dcases}
\end{align}
To see this consider $f$ in the space of $\ind_{\widetilde{\mathrm{St}}_{\psi_k}U_n}^{\widetilde{Q}_n}(\theta_{C,\psi_k})$ and $f'$ in the space of
$\ind_{\widetilde{\mathrm{St}}_{\psi_{k'}}U_n}^{\widetilde{Q}_n}(\theta'_{C,\psi_{k'}^{-1}})$, and look at the Jacquet-Langlands integral
\begin{align*}
&\int_{\mathcal{C}}c\cdot(f\otimes f')(g,g')\ dc=\int_{\mathcal{C}}f(gc)f'(g'c)\ dc=f(g)f'(g')\int_{\mathcal{C}}\psi_{k}(\rconj{g}c)\psi_{k'}^{-1}(\rconj{g'}c)\ dc,
\end{align*}
where $\mathcal{C}<C$ is a compact subgroup.

Since $\theta_{C,\psi_k}\otimes\theta'_{C,\psi_{k}^{-1}}$ is a non-genuine representation of $\widetilde{\mathrm{St}}_{\psi_k}$, we can replace the representation on the
\rhs\ of \eqref{eq:123 k and k'} with
\begin{align*}
(\ind_{\mathrm{St}_{\psi_k}U_n}^{Q_n}(\theta_{C,\psi_k}\otimes\theta'_{C,\psi_{k}^{-1}}))_{N_n,\psi}.
\end{align*}

Define $\vartheta$ with respect to $\theta^{\GL_{n-2k}}$ as in Proposition~\ref{proposition:Jacquet module C and k} and similarly, define $\vartheta'$ with respect to ${\theta'}^{\GL_{n-2k}}$.
According to the proposition%~\ref{proposition:Jacquet module C and k}
, $\theta_{C,\psi_k}$ is embedded in a finite direct sum of representations
$\vartheta\otimes(\theta^{G_{2k}})_{C_{U_{2k}},\psi_k}$, which are trivial on $U_{n-2k}$. Put
\begin{align*}
\Pi_k=\vartheta\otimes\vartheta'\otimes(\theta^{G_{2k}})_{C_{U_{2k}},\psi_k}\otimes({\theta'}^{G_{2k}})_{C_{U_{2k}},\psi_k^{-1}}.
\end{align*}
It is enough to prove that for all $0\leq k\leq \lfloor n/2\rfloor$,
\begin{align}\label{eq:vanishing result to prove}
(\ind_{\mathrm{St}_{\psi_k}U_n}^{Q_n}\Pi_k)_{N_n,\psi}=0.
\end{align}
This holds for $k=0$, simply because $U_n$ is normal in $Q_n$, $\Pi_0$ is trivial on $U_n$ while $\psi$ is not.

The case of $k=n/2$ is handled by the following claim, whose proof is deferred to below.
\begin{claim}\label{claim:induced from double Weil claim 0}
Equality~\eqref{eq:vanishing result to prove} holds for $k=n/2$.
\end{claim}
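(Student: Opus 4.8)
When $k=n/2$ we have $n-2k=0$, so $\GL_{n-2k}$ is trivial, the representation $\vartheta$ of Proposition~\ref{proposition:Jacquet module C and k} is trivial, and $\Pi_k=(\theta^{G_n})_{C,\psi_k}\otimes({\theta'}^{G_n})_{C,\psi_k^{-1}}$. By Theorem~\ref{theorem:small rep is weakly minimal}, each of $(\theta^{G_n})_{C,\psi_k}$ and $({\theta'}^{G_n})_{C,\psi_k^{-1}}$ is a direct sum of copies of the Weil representation ($\omega_{\psi}$, respectively $\omega_{\psi^{-1}}$), so $\Pi_k$ is a direct sum of copies of $\omega_{\psi}\otimes\omega_{\psi^{-1}}$. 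Moreover $\mathrm{St}_{\psi_k}=\Sp_k\times G_0$; the factor $G_0$ acts throughout by a character (its image is $C_{G_n}$) and may be dropped, and the action of $\Sp_k\ltimes U_n$ on $\omega_{\psi}\otimes\omega_{\psi^{-1}}$ factors through the epimorphism $\ell\colon\Sp_k\ltimes U_n\to\Sp_k\ltimes H_n$ of Section~\ref{section:Heisenberg jacquet modules}. As the sum is direct, it suffices to prove
\begin{align*}
(\ind_{\Sp_k U_n}^{Q_n}(\omega_{\psi}\otimes\omega_{\psi^{-1}}))_{N_n,\psi}=0,
\end{align*}
where $Q_n$ is viewed as $\GL_n\ltimes U_n$ after discarding the common $G_0$.

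The plan is to evaluate this Jacquet functor in two stages along $N_n=N_{\GL_n}\ltimes U_n$, first along $U_n$, then along $N_{\GL_n}$. I would begin with the Jacquet module along $U_n$ against $\psi|_{U_n}$. Since $\psi$ is generic on $N_n$, its restriction to $U_n$ is a nontrivial character, trivial on $C=C_{U_n}$, hence factoring through $\ell$; unwinding the definitions of $\ell$ and $\psi_k$ one checks that it corresponds to the character $\mu$ of $H_n$ with $\mu(u_1,u_2;z)=\psi((u_1)_1)$. By Claim~\ref{claim:Jacquet modules along Hn of double Weil reps}(2), $(\omega_{\psi}\otimes\omega_{\psi^{-1}})_{U_n,\psi|_{U_n}}$ is then the trivial one-dimensional representation of $P_{n-1,1}^{\circ}\cap\Sp_k$. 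To commute this with the induction I would use Mackey theory: $U_n$ is normal in $\GL_n\ltimes U_n$, the stabilizer of $\psi|_{U_n}$ in $\GL_n$ is a mirabolic subgroup $P_{n-1,1}^{\circ}$, and since $\Sp_k$ is transitive on the nontrivial characters of $U_n/C$ (equivalently, on the nonzero vectors of $F^n$), the double coset space $\Sp_k\backslash\GL_n/P_{n-1,1}^{\circ}$ is a single point. Hence, up to a modulus character (induction is not normalized here, but a twist does not affect vanishing),
\begin{align*}
(\ind_{\Sp_k U_n}^{\GL_n U_n}(\omega_{\psi}\otimes\omega_{\psi^{-1}}))_{U_n,\psi|_{U_n}}\cong\ind_{P_{n-1,1}^{\circ}\cap\Sp_k}^{P_{n-1,1}^{\circ}}(\mathbf{1}).
\end{align*}

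It then remains to prove that the Jacquet module of $\ind_{P_{n-1,1}^{\circ}\cap\Sp_k}^{P_{n-1,1}^{\circ}}(\mathbf{1})$ along $N_{\GL_n}$ against the standard generic character vanishes; note $N_{\GL_n}\subset P_{n-1,1}^{\circ}$, while $P_{n-1,1}^{\circ}\cap\Sp_k$ is the mirabolic of $\Sp_k$, an extension of $\Sp_{k-1}$ by a Heisenberg group. Equivalently, this induced representation carries no Whittaker functional --- a special case of the disjointness of the symplectic and the Whittaker models of $\GL_n$. This is precisely the content of \cite{OS} (Proposition~1), to which the displayed representation is matched after identifying the subgroups above (see also \cite{Klyachko}).

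The delicate part is the Mackey bookkeeping of the second step: pinning down the stabilizer of $\psi|_{U_n}$, checking that only a single double coset contributes, and tracking the (harmless) unnormalized-induction modulus characters so that what remains is literally the representation handled in \cite{OS}. The only genuinely non-elementary ingredient is the final vanishing: for $k=1$ it degenerates to the fact that a nontrivial character has no invariant vectors, but for $k\geq2$ the subgroup $P_{n-1,1}^{\circ}\cap\Sp_k$ is strictly larger than $N_{\GL_n}$ and one truly needs the theorem of Offen and Sayag.
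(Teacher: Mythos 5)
Your reduction to the vanishing of $(\ind_{\Sp_k U_n}^{Q_n}(\omega_{\psi}\otimes\omega_{\psi^{-1}}))_{N_n,\psi}$ is exactly the paper's first move, and your two key inputs --- Claim~\ref{claim:Jacquet modules along Hn of double Weil reps}(2) and the result of Offen and Sayag --- are the ones the paper uses. The organizational difference is that the paper does not split the Jacquet functor into a $U_n$-stage followed by an $N_{\GL_n}$-stage; it applies the Bernstein--Zelevinsky theory of $l$-sheaves and invariant distributions (\cite{BZ1} Theorem~6.9) to the action of $N_n$ on $X=\lmodulo{\Sp_k}{\GL_n}$ in one stroke, reducing the claim to $\Hom_{P^x}(\rconj{x^{-1}}\pi_0,\psi)=0$ for each orbit representative $x\in P_{n-1,1}^{\circ}$. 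Your first stage is fine as stated, since $\Sp_k$ is transitive on the nonzero vectors and there is genuinely a single double coset there.

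Two points in your last step need repair. First, ``Mackey theory'' in the naive finite-filtration sense is not available for the second stage: the double coset space $\rmodulo{\lmodulo{(P_{n-1,1}^{\circ}\cap\Sp_k)}{P_{n-1,1}^{\circ}}}{N_{\GL_n}}$ is infinite (already for $k=1$ it is a copy of $F^*$), so the Jacquet module is not glued from finitely many pieces; one needs precisely the distribution-theoretic statement for constructive actions that the paper quotes from \cite{BZ1}. Second, and more substantively, \cite{OS} Proposition~1 is the disjointness statement for \emph{irreducible} representations, i.e. $\Hom_{\GL_n}(\ind_{\Sp_k}^{\GL_n}1,\sigma)=0$ for irreducible generic $\sigma$; it does not exclude a Whittaker functional on the non-admissible induced representation that fails to factor through an irreducible generic quotient, so it does not yield the vanishing of the full twisted Jacquet module. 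What is needed --- and what the paper cites at this point --- is \cite{OS} Proposition~2, the geometric statement that $\psi|_{\rconj{x^{-1}}\Sp_k\cap N_{\GL_n}}\ne1$ for every $x\in\GL_n$; note that for $x\in P_{n-1,1}^{\circ}$ one has $\rconj{x^{-1}}(P_{n-1,1}^{\circ}\cap\Sp_k)\cap N_{\GL_n}=\rconj{x^{-1}}\Sp_k\cap N_{\GL_n}$, so this is exactly the orbit-by-orbit input your argument requires. With these two substitutions your proof closes and coincides in substance with the paper's.
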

%where $\psi$ is regarded as a representation of $U_n$. It is left to prove $(\ind_{\Sp_k}^{\GL_{n}}1)_{N_{\GL_n},\psi}=0$, for a generic character $\psi$ of $N_{\GL_n}$. This follows immediately from
%Proposition~1 of Offen and Sayag (\cite{OS}, in their notation take $\mathcal{H}^{r,r'}$ with $r=0$ and $r'=n$).

Lastly, assume $0<k<n/2$. Set $Q=Q_{n-2k}\cap Q_n$. By transitivity of induction
\begin{align*}
(\ind_{\mathrm{St}_{\psi_k}U_n}^{Q_n}\Pi_k)_{N_n,\psi}=(\ind_{Q}^{Q_n}(\ind_{\mathrm{St}_{\psi_k}U_n}^{Q}\Pi_k))_{N_n,\psi}.
\end{align*}
The representation $\ind_{\mathrm{St}_{\psi_k}U_n}^{Q}\Pi_k$ is trivial on $U_{n-2k}$.
By virtue of the Geometric Lemma of Bernstein and Zelevinsky (\cite{BZ2} Theorem~5.2), the representation on the \rhs\ is glued from
Jacquet modules of $\ind_{\mathrm{St}_{\psi_k}U_n}^{Q}\Pi_k$. Note that in general, the quotients are representations induced from Jacquet modules, here the induction is trivial because the stabilizer of the character $\psi$ of $N_n$ is $N_n\times G_0$.

Let $\mathcal{W}$ be a set of representatives to the double cosets $\rmodulo{\lmodulo{Q}{Q_n}}{(N_nG_0)}$. That is, $Q_n=\coprod_{w\in\mathcal{W}}Qw^{-1}N_nG_0$. We can take the elements $w$ to be Weyl elements of $\GL_n$. When
$\psi|_{\rconj{w}U_{n-2k}\cap N_n}\ne1$, the quotient corresponding to $w$ vanishes. This implies there is only one quotient, corresponding to $w_0=\left(\begin{smallmatrix}&I_{2k}\\I_{n-2k}\end{smallmatrix}\right)$, namely
\begin{align*}
\delta\cdot(\ind_{\mathrm{St}_{\psi_k}U_n}^{Q}\Pi_k)_{N_{\GL_{n-2k}}\times N_{2k},\psi}.
\end{align*}
Here $\delta$ is some modulus character, hereby ignored, and $N_{\GL_{n-2k}}\times N_{2k}<M_{n-2k}$. As a $G_0$-module, this representation is isomorphic to
\begin{align*}
(\vartheta\otimes\vartheta')_{N_{\GL_{n-2k}},\psi}\otimes\ind_{\mathrm{St}_{\psi_k}U_{2k}}^{Q_{2k}}((\theta^{G_{2k}})_{C_{U_{2k}},\psi_k}\otimes({\theta'}^{G_{2k}})_{C_{U_{2k}},\psi_k^{-1}})_{N_{2k},\psi}.
\end{align*}
Here $\psi$ is regarded as a generic character of $N_{\GL_{n-2k}}$ and $N_{2k}$. Since the case $k=n/2$ implies
\begin{align*}
\ind_{\mathrm{St}_{\psi_k}U_{2k}}^{Q_{2k}}((\theta^{G_{2k}})_{C_{U_{2k}},\psi_k}\otimes({\theta'}^{G_{2k}})_{C_{U_{2k}},\psi_k^{-1}})_{N_{2k},\psi}=0,
\end{align*}
Equality~\eqref{eq:vanishing result to prove} follows.
\begin{proof}[Proof of Claim~\ref{claim:induced from double Weil claim 0}]
For $k=n/2$, $\Pi_k=\theta_{C,\psi_k}\otimes\theta'_{C,\psi_k^{-1}}$.
Now we apply Theorem~\ref{theorem:small rep is weakly minimal}. For simplicity of computations, we can replace
$\psi_k$ with the character defined in the proof of the theorem, then use the epimorphism
$\ell:\widetilde{\mathrm{St}}_{\psi_k}\ltimes U_n\rightarrow \widetilde{\Sp}_k\ltimes H_n$ given there. By
Theorem~\ref{theorem:small rep is weakly minimal}, $\theta_{C,\psi_k}$ is isomorphic to the direct sum of copies
of $\omega_{\psi}$. Pull $\omega_{\psi}$ back to a representation of $\widetilde{\mathrm{St}}_{\psi_k}\ltimes U_n$. Note that
the $G_0$ part of $\mathrm{St}_{\psi_k}$ was ignored in the proof of Theorem~\ref{theorem:small rep is weakly minimal}, since it acts
by a character, so we can ignore this here as well. Equality~\eqref{eq:vanishing result to prove} will follow from
\begin{align}\label{claim:induced from double Weil claim 0 main equality}
(\ind_{\mathrm{St}_{\psi_k}U_{n}}^{Q_{n}}(\omega_{\psi}\otimes\omega_{\psi^{-1}}))_{N_{n},\psi}=0.
\end{align}

We need some notation.  Put
\begin{align*}
\pi_0=\omega_{\psi}\otimes\omega_{\psi^{-1}},\qquad G=P_{n,1}^{\circ},\qquad  V=Z_{n,1},\qquad P=\Sp_k\ltimes V,\qquad X=\lmodulo{P}{G}.
\end{align*}
Note that $\lmodulo{C}{Q_n}\isomorphic G=\GL_n\ltimes V$ (in fact, $\lmodulo{C}{Q_n}\isomorphic G\times G_0$ but $G_0$ was ignored), this isomorphism restricts to an isomorphism $\lmodulo{C}{U_n}\isomorphic V$. In this manner $\psi$ is also a character of $V$, 
$\psi(v)=\psi(v_{n,n+1})$. 
Since $\pi_0$ is trivial on $C$, we can regard it as a representation of $P$ and if
$\pi=\ind_{P}^{G}(\pi_0)$, $\ind_{\mathrm{St}_{\psi_k}U_{n}}^{Q_{n}}(\pi_0)\isomorphic\pi$
as $G$-modules.

We apply the theory of $l$-sheafs of Bernstein and Zelevinsky (\cite{BZ1} 1.13 and Section~6). In the following, we freely use their 
terminology and definitions. Let $(X,\mathcal{F})$ be the $l$-sheaf
corresponding to $\pi$ (\cite{BZ1} 2.23). The group $N_n$ acts on $X$ by $u\cdot x=xu^{-1}$ and on $\mathcal{F}$ by
$u\cdot\varphi(x)=\psi^{-1}(u)\varphi(u^{-1}\cdot x)$. An $N_n$-invariant $\mathcal{F}$-distribution on $X$ is an element of
$\Hom_{N_n}(\pi,\psi)$. Since $\Hom_{N_n}(\pi,\psi)$ is the algebraic dual of $\pi_{N,\psi}$, to prove
\eqref{claim:induced from double Weil claim 0 main equality} we will show that there are no
nonzero $N_n$-invariant $\mathcal{F}$-distributions on $X$.

The action of $N_n$ on $X$ is constructive (\cite{BZ1} Theorem~A). If $x\in X$, let $P^x=\rconj{x^{-1}}P\cap N_n$ be the stabilizer of $x$ in $N_n$. The orbit
of $x$ is $N_n\cdot x$.
The mapping $u\cdot x\mapsto (P^x)u^{-1}$ induces a homeomorphism $N_n\cdot x\isomorphic\lmodulo{P^x}{N_n}$ (\cite{BZ1} 1.6).
The restriction of $\mathcal{F}$ to the orbit of $x$ (this restriction is an $l$-sheaf, because the action is constructive) is isomorphic to
$\ind_{P^x}^{N_n}(\rconj{x^{-1}}\pi_0)$, where $\rconj{x^{-1}}\pi_0$ is the representation
of $P^x$ acting in the space of $\pi_0$ by $\rconj{x^{-1}}\pi_0(z)=\pi_0(\rconj{x}z)$.

By virtue of Theorem~6.9 of \cite{BZ1}, to show there are no nonzero $N_n$-invariant $\mathcal{F}$-distributions on $X$, it is enough to
prove that for each representative $x$,
\begin{align*}
\Hom_{N_n}(\ind_{P^x}^{N_n}(\rconj{x^{-1}}\pi_0),\psi)=\Hom_{P^x}(\rconj{x^{-1}}\pi_0,\psi)=0.
\end{align*}
We can take representatives $x\in P_{n-1,1}^{\circ}<\GL_n$. Then $P^x=\Sp_k^x\ltimes V$, where $\Sp_k^x=\rconj{x^{-1}}\Sp_k\cap N_{\GL_n}$.
In addition, because $P_{n-1,1}^{\circ}$ fixes $\psi|_{V}$, $(\rconj{x^{-1}}\pi_0)_{V,\psi}=\rconj{x^{-1}}({(\pi_0)}_{V,\psi})$. Hence
\begin{align*}
\Hom_{P^x}(\rconj{x^{-1}}\pi_0,\psi)=\Hom_{\Sp_k^x}(\rconj{x^{-1}}({(\pi_0)}_{V,\psi}),\psi).
\end{align*}

Note that $(\pi_0)_{V,\psi}=(\omega_{\psi}\otimes\omega_{\psi^{-1}})_{H_n,\mu}$ for the nontrivial $\mu$ given in
Claim~\ref{claim:Jacquet modules along Hn of double Weil reps}. According to that claim $(\pi_0)_{V,\psi}$ is the trivial one-dimensional  
representation of $\ell^{-1}(\Sp_k\cap P_{n-1,1}^{\circ})$. Since 
$\rconj{x}N_{\GL_n}<P_{n-1,1}^{\circ}$ for any $x\in P_{n-1,1}^{\circ}$, 
$\rconj{x^{-1}}({(\pi_0)}_{V,\psi})$ is trivial on $\Sp_k^x$ (the epimorphism $\ell$ is easily seen to be harmless here). 
But Offen and Sayag (\cite{OS} Proposition~2,
we use $\mathcal{H}^{r,r'}$ with $r=0$ and $r'=n$, in their notation) proved that
$\psi|_{\Sp_k^x}\ne1$ for any $x\in \GL_n$. This implies $\Hom_{\Sp_k^x}(\rconj{x^{-1}}({(\pi_0)}_{V,\psi}),\psi)=0$, as required.
\end{proof}
\end{proof}

\begin{corollary}\label{corollary:upper hered dist LQ}
Let $\tau$ be an irreducible unitary supercuspidal $(\theta^{\GL_n}_{\chi,\psi},\theta^{\GL_n}_{\chi',\psi'})$-distinguished representation of $\GL_n$.
Then the Langlands quotient of $\mathrm{I}(\tau,1/2,(\chi\chi')^{-1})$ is
$(\theta^{G_n}_{\chi,\psi},\theta^{G_n}_{\chi',\psi'})$-distinguished. In particular, if $\tau$ is
distinguished, so is the Langlands quotient of $\mathrm{I}(\tau,1/2,1)$.
\end{corollary}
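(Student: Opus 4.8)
The plan is to deduce the statement formally from Proposition~\ref{proposition:upper heredity of dist 1}, Theorem~\ref{theorem:tensor of small is non generic}, and the known structure of the standard module $\mathrm{I}(\tau,1/2,\eta)$ for $\tau$ supercuspidal. Put $\eta=(\chi\chi')^{-1}$, $\theta=\theta^{G_n}_{\chi,\psi}$, $\theta'=\theta^{G_n}_{\chi',\psi'}$ and $\Pi=\mathrm{I}(\tau,1/2,\eta)$. Proposition~\ref{proposition:upper heredity of dist 1} provides a nonzero $T\in\Hom_{G_n}(\theta\otimes\theta',\Pi^{\vee})$. If $\Pi$ is irreducible it is generic and coincides with its Langlands quotient, so there is nothing more to prove. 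Assume then that $\Pi$ is reducible; since $\tau$ is irreducible unitary supercuspidal, $\Pi$ has length two, its unique irreducible generic constituent $\sigma$ is a \emph{sub}representation, and $\mathrm{LQ}(\Pi)$ is the non-generic quotient (see the discussion in Section~\ref{section:introduction}). In particular $\sigma\not\cong\mathrm{LQ}(\Pi)$, and the unique proper nonzero $G_n$-submodule of $\Pi^{\vee}$ is $\mathrm{LQ}(\Pi)^{\vee}$.

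The key observation is that $\theta\otimes\theta'$ cannot surject onto an irreducible generic representation $W$ of $G_n$: a nonzero map $\theta\otimes\theta'\to W$ would be onto, and pulling back a Whittaker functional of $W$ would contradict Theorem~\ref{theorem:tensor of small is non generic}. Applying this with $W=\sigma^{\vee}$, which is again irreducible and generic since genericity is preserved under contragredient, I conclude that $\theta\otimes\theta'$ does not surject onto $\sigma^{\vee}$. Now dualizing $0\to\sigma\to\Pi\to\mathrm{LQ}(\Pi)\to0$ gives $0\to\mathrm{LQ}(\Pi)^{\vee}\to\Pi^{\vee}\to\sigma^{\vee}\to0$. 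The image of $T$ is a nonzero $G_n$-submodule of $\Pi^{\vee}$; were it all of $\Pi^{\vee}$, then composing $T$ with $\Pi^{\vee}\twoheadrightarrow\sigma^{\vee}$ would produce the surjection onto $\sigma^{\vee}$ just excluded. Hence the image of $T$ is the proper nonzero submodule $\mathrm{LQ}(\Pi)^{\vee}$, so $T$ induces a nonzero element of $\Hom_{G_n}(\theta\otimes\theta',\mathrm{LQ}(\Pi)^{\vee})$; that is, $\mathrm{LQ}(\Pi)$ is $(\theta^{G_n}_{\chi,\psi},\theta^{G_n}_{\chi',\psi'})$-distinguished.

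For the final assertion, if $\tau$ is distinguished then, after possibly twisting $\tau$ by a character of $F^*$, we may take $\chi=\chi'=1$, so $\eta=1$; applying the above for every choice of the additive character $\psi$ (with a suitable $\psi'$) shows that $\mathrm{LQ}(\mathrm{I}(\tau,1/2,1))$ is distinguished. The argument is essentially formal once the three cited ingredients are in place, so I do not anticipate a real obstacle; the one point that deserves a word is why the generic constituent of $\Pi$ is a subrepresentation while $\mathrm{LQ}(\Pi)$ is the non-generic quotient — this rests on the fact that a standard module with generic inducing data (here $\tau$ supercuspidal, hence generic) carries a Whittaker functional, which must be realized on a subrepresentation, together with the defining property of $\mathrm{LQ}(\Pi)$ as the unique irreducible quotient.
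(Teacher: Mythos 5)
Your argument is correct and is essentially the paper's own proof: Proposition~\ref{proposition:upper heredity of dist 1} produces the nonzero map into $\mathrm{I}(\tau,1/2,(\chi\chi')^{-1})^{\vee}$, Theorem~\ref{theorem:tensor of small is non generic} rules out any nonzero map onto the contragredient of the generic constituent, and left exactness of $\Hom$ (equivalently, your "image lands in the kernel" argument) forces the map into $\mathrm{LQ}^{\vee}$. The one point to tighten is your justification that the generic constituent is the subrepresentation and the Langlands quotient is non-generic: a Whittaker functional on the full induced representation need not a priori "be realized on a subrepresentation," and the correct reference is the standard module theorem of Casselman and Shahidi \cite{CSh}, exactly as the paper invokes it.
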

\begin{proof}[Proof of Corollary~\ref{corollary:upper hered dist LQ}]
According to Proposition~\ref{proposition:upper heredity of dist 1},
\begin{align*}
\Hom_{G_n}(\theta^{G_n}_{\chi,\psi}\otimes\theta^{G_n}_{\chi',\psi'},\mathrm{I}(\tau,1/2,(\chi\chi')^{-1})^{\vee})\ne0.
\end{align*}
Theorem~\ref{theorem:tensor of small is non generic} implies $\mathrm{I}(\tau,1/2,(\chi\chi')^{-1})$ is reducible (because $\tau$ is generic),
then by Casselman and Shahidi \cite{CSh} (Theorem~1), the Langlands quotient $\mathrm{LQ}(\mathrm{I}(\tau,1/2,(\chi\chi')^{-1}))$ is non-generic, and
the unique irreducible subspace of $\mathrm{I}(\tau,1/2,(\chi\chi')^{-1})$ is generic. Also note that the length of $\mathrm{I}(\tau,1/2,(\chi\chi')^{-1})$
is $2$ (because it is reducible and $\tau$ is supercuspidal, see \cite{BZ2}~2.8). Now the result follows from
Theorem~\ref{theorem:tensor of small is non generic} and the left exactness of the $\Hom$ functor.
\end{proof}

As a corollary, we can now prove Theorem~\ref{theorem:supercuspidal dist GLn and pole}. Namely,
for an irreducible unitary supercuspidal $\tau$, being distinguished is equivalent to the occurrence of a pole at $s=0$ of
$L(s,\tau,\mathrm{Sym}^2)$.
\begin{proof}[Proof of Theorem~\ref{theorem:supercuspidal dist GLn and pole}]
If $\tau$ is distinguished, as in the proof of Corollary~\ref{corollary:upper hered dist LQ} we see that $\mathrm{I}(\tau,1/2,1)$ is reducible,
then according to Casselman and Shahidi \cite{CSh} (Proposition~5.3, their Conjecture 1.1 was proved for $G_n$ in \cite{Asg}) $L(s,\tau,\mathrm{Sym}^2)$ has a pole at $s=0$.

In the other direction, assume $L(s,\tau,\mathrm{Sym}^2)$ has a pole at $s=0$. As an application of the descent method of Ginzburg, Rallis and Soudry %(\cite{GRS7,RGS
(see e.g., \cite{GRS2,GRS5,GRS7,GRS3,GRS8,JSd1,JSd2,Soudry5,Soudry4,RGS}), one can globalize $\tau$ to a cuspidal automorphic representation
$\pi$ of $\GL_n(\Adele)$, such that $L^S(s,\pi,\mathrm{Sym}^2)$ has a pole at $s=1$ (see the appendix of \cite{PR}).
Therefore the nonvanishing of \eqref{int:BG period} implies $\tau$ is distinguished (\cite{BG} Theorem~7.6).
\end{proof}
\begin{corollary}
If $\tau$ is an irreducible unitary supercuspidal distinguished representation, it must be self-dual.
\end{corollary}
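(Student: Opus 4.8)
The plan is to deduce this from Theorem~\ref{theorem:supercuspidal dist GLn and pole} together with the factorization of the Rankin--Selberg $L$-function and the classification of its poles. Recall that for an irreducible representation $\tau$ of $\GL_n$ one has
\begin{align*}
L(s,\tau\times\tau)=L(s,\tau,\mathrm{Sym}^2)\,L(s,\tau,\wedge^2),
\end{align*}
since the tensor square of the standard representation of the dual group $\GL_n(\C)$ decomposes as $\mathrm{Sym}^2\oplus\wedge^2$, and the Langlands--Shahidi $L$-function factors according to such direct sum decompositions (see \cite{Sh5}). Recall also that local $L$-functions of this kind have no zeros, so a pole of one factor on the right-hand side cannot be cancelled by the other factor.

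Now let $\tau$ be an irreducible unitary supercuspidal distinguished representation. By Theorem~\ref{theorem:supercuspidal dist GLn and pole}, $L(s,\tau,\mathrm{Sym}^2)$ has a pole at $s=0$. Since $L(s,\tau,\wedge^2)$ does not vanish at $s=0$, the displayed factorization forces $L(s,\tau\times\tau)$ to have a pole at $s=0$. On the other hand, for irreducible supercuspidal representations $\tau_1,\tau_2$ of $\GL_n$, the Rankin--Selberg $L$-function $L(s,\tau_1\times\tau_2)$ has a pole at $s=0$ if and only if $\tau_2\isomorphic\tau_1^{\vee}$ (Jacquet--Piatetski-Shapiro--Shalika). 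Taking $\tau_1=\tau_2=\tau$ we conclude $\tau\isomorphic\tau^{\vee}$, that is, $\tau$ is self-dual.

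The argument carries no essential difficulty; the only point that must be handled with (routine) care is the non-vanishing of $L(s,\tau,\wedge^2)$ at $s=0$, which is what guarantees that the pole of the symmetric square factor genuinely survives in $L(s,\tau\times\tau)$ and hence translates into self-duality of $\tau$.
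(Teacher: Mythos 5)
Your argument is correct and is essentially the one the paper intends: the corollary is stated without proof as an immediate consequence of Theorem~\ref{theorem:supercuspidal dist GLn and pole} combined with the description of the poles of $L(s,\tau,\mathrm{Sym}^2)$ for supercuspidal $\tau$ in Shahidi \cite{Sh5} (Theorem~6.2), which already encodes that a pole at $s=0$ forces $\tau\isomorphic\tau^{\vee}$. Your unpacking via the factorization $L(s,\tau\times\tau)=L(s,\tau,\mathrm{Sym}^2)L(s,\tau,\wedge^2)$, the non-vanishing of local $L$-factors, and the Jacquet--Piatetski-Shapiro--Shalika criterion for poles of $L(s,\tau_1\times\tau_2)$ is exactly the standard route and is complete.
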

\begin{remark}\label{remark:applicability to twisted}
The analogous result for an irreducible supercuspidal $(\theta^{\GL_n}_{\chi,\psi},\theta^{\GL_n}_{\chi',\psi'})$-distinguished $\tau$ should
also hold. One needs to verify the applicability of the globalization argument (see \cite{HS}).
\end{remark}

Given exceptional representations $\theta$ and $\theta'$ of $\widetilde{G}$ (as in the beginning of this section), we can consider the space of $\theta\otimes\theta'$ as a model for representations of $G$.
We refer to the dimension of $\Hom_G(\theta\otimes\theta',\tau^{\vee})$ as the multiplicity of $\tau$. The next proposition relates the multiplicities
of $\tau$ and $\mathrm{I}(\tau,1/2,\eta)$.

In the case of $\GL_n$, Kable \cite{Kable} conjectured that the multiplicity of an irreducible representation is at most one. He proved this for $n\leq 3$, and for arbitrary $n$
under a certain homogeneity condition (\cite{Kable} Corollary~6.1). %\footnote{The author is aware of an ongoing work of Shunsuke Yamana, who proved multiplicity one
%for any irreducible unitary representation of $\GL_n$.}
It is reasonable to believe multiplicity one also holds in the context of $G_n$ (see
\cite{LM6} Remark~4.2).
\begin{proposition}\label{proposition:one-dim}
%Let $\tau$ be an irreducible tempered representation of $\GL_n$. Then $\tau$ is
%$(\theta^{\GL_n}_{\chi,\psi},\theta^{\GL_n}_{\chi',\psi'})$-distinguished if and only if
%$\mathrm{I}(\tau,1/2,\eta)$ is $(\theta^{G_n}_{\chi,\psi},\theta^{G_n}_{\chi',\psi'})$-distinguished. Here
%$\eta=(\chi\chi')^{-1}$. Moreover,
Let $\tau$ be an irreducible tempered representation of $\GL_n$, put $\eta=(\chi\chi')^{-1}$ and assume $|\eta|=1$. Then
\begin{align}\label{eq:proposition one-dim statement}
\Dim\ \Hom_{G_n}(\theta^{G_n}_{\chi,\psi}\otimes\theta^{G_n}_{\chi',\psi'},\mathrm{I}(\tau,1/2,\eta)^{\vee})=\Dim\
\Hom_{\GL_n}(\theta^{\GL_n}_{\chi,\psi}\otimes\theta^{\GL_n}_{\chi',\psi'},\tau^{\vee}).
\end{align}
In particular, the representation $\tau$ is $(\theta^{\GL_n}_{\chi,\psi},\theta^{\GL_n}_{\chi',\psi'})$-distinguished if and only if
$\mathrm{I}(\tau,1/2,\eta)$ is $(\theta^{G_n}_{\chi,\psi},\theta^{G_n}_{\chi',\psi'})$-distinguished.
\end{proposition}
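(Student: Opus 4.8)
The plan is to prove the dimension identity~\eqref{eq:proposition one-dim statement} as two inequalities; the equivalence of distinguishedness then follows by taking dimensions. The inequality ``$\ge$'' is essentially Proposition~\ref{proposition:upper heredity of dist 1}: the assignment $L\mapsto T$ constructed in its proof is linear, and the computation there (taking $f$ supported near the identity in $\lmodulo{Q_n}{G_n}$) shows $T\ne0$ whenever $L\ne0$, so it is injective. The hypothesis $\eta=(\chi\chi')^{-1}$ is exactly the condition making the $G_0=C_{G_n}$-component of a trilinear form in~\eqref{space:heredity proof lower space} automatically nonzero and one-dimensional, so~\eqref{space:heredity proof lower space} has the same dimension as $\Tri_{\GL_n}(\tau,\theta^{\GL_n}_{\chi,\psi},\theta^{\GL_n}_{\chi',\psi'})$, i.e.\ as the \rhs\ of~\eqref{eq:proposition one-dim statement}; the condition $|\eta|=1$ is used only to keep all twists unitary.

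For ``$\le$'' I would pass to contragredients and apply Frobenius reciprocity: since $\mathrm{I}(\tau,1/2,\eta)^{\vee}=\Ind_{Q_n}^{G_n}(\delta_{Q_n}^{1/2}\tau^{\vee}|\det{}|^{-1/2}\otimes\eta^{-1})$,
\begin{align*}
\Hom_{G_n}(\theta^{G_n}_{\chi,\psi}\otimes\theta^{G_n}_{\chi',\psi'},\mathrm{I}(\tau,1/2,\eta)^{\vee})\isomorphic\Hom_{M_n}\big((\theta^{G_n}_{\chi,\psi}\otimes\theta^{G_n}_{\chi',\psi'})_{U_n},\ \delta_{Q_n}^{1/2}\tau^{\vee}|\det{}|^{-1/2}\otimes\eta^{-1}\big).
\end{align*}
By the $\widetilde{Q}_n$-filtration of $\theta^{G_n}_{\chi,\psi}\otimes\theta^{G_n}_{\chi',\psi'}$ from the proof of Theorem~\ref{theorem:tensor of small is non generic}, the Jacquet module on the right is glued from the $U_n$-Jacquet modules of the quotients~\eqref{eq:filtration quotients}. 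Exactly as in that proof, the Jacquet--Langlands integral along $C=C_{U_n}$ annihilates the quotients with $k\ne k'$ and, for $k=k'$, reduces the quotient to $\ind_{\widetilde{\mathrm{St}}_{\psi_k}U_n}^{\widetilde{Q}_n}(\theta_{C,\psi_k}\otimes\theta'_{C,\psi_k^{-1}})$, whose $U_n$-Jacquet module is $\ind_{\widetilde{\mathrm{St}}_{\psi_k}}^{\widetilde{M}_n}\big((\theta_{C,\psi_k}\otimes\theta'_{C,\psi_k^{-1}})_{U_n}\big)$. By left-exactness of $\Hom$ it suffices to treat each $k$ separately.

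For $k=0$ the quotient, after the $U_n$-Jacquet, is $(\theta^{\GL_n}_{|\cdot|^{(n-1)/4}\chi,\psi}\otimes\theta^{G_0}_{\chi,1})\otimes(\theta^{\GL_n}_{|\cdot|^{(n-1)/4}\chi',\psi'}\otimes\theta^{G_0}_{\chi',1})$ by~\eqref{eq:containment of Jacquet module along maximal unipotent 2}; since $\delta_{Q_n}|_{\GL_n}=|\det{}|^{n}$ and $\delta_{Q_n}|_{G_0}=1$, the $|\det{}|$-powers on the $\GL_n$-factor cancel and the $G_0$-factor is handled by $\eta=(\chi\chi')^{-1}$ as in Proposition~\ref{proposition:upper heredity of dist 1}, so this piece contributes exactly $\Hom_{\GL_n}(\theta^{\GL_n}_{\chi,\psi}\otimes\theta^{\GL_n}_{\chi',\psi'},\tau^{\vee})$, the \rhs\ of~\eqref{eq:proposition one-dim statement}. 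For $1\le k\le\lfloor n/2\rfloor$ I would combine Proposition~\ref{proposition:Jacquet module C and k}, Theorem~\ref{theorem:small rep is weakly minimal} and the first part of Claim~\ref{claim:Jacquet modules along Hn of double Weil reps}: after the further Jacquet along the Heisenberg part of $U_n$, $(\theta^{G_{2k}})_{C_{U_{2k}},\psi_k}\otimes({\theta'}^{G_{2k}})_{C_{U_{2k}},\psi_k^{-1}}$ collapses to copies of the trivial representation of $\Sp_k$, so the $k$-th contribution, as a $\GL_n$-module (the $G_0$-part acting by a single central character that must equal $\eta^{-1}$, else the $\Hom$ vanishes), embeds into copies of a $|\det{}|$-twist of $\Ind_{P_{n-2k,2k}}^{\GL_n}\big((\vartheta\otimes\vartheta')\boxtimes\ind_{\Sp_k}^{\GL_{2k}}\mathbf{1}\big)$, with $\vartheta,\vartheta'$ as in Proposition~\ref{proposition:Jacquet module C and k} (for $k=n/2$ the $\GL_{n-2k}$-factor is absent and this is just $\ind_{\Sp_{n/2}}^{\GL_n}\mathbf{1}$). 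If a nonzero $\Hom$ into a $|\det{}|$-twist of $\tau^{\vee}$ existed, then that twist — still generic, since $\tau$ is tempered — would be a subquotient of such an induced representation, forcing its $(N_{\GL_n},\psi)$-Jacquet module to be nonzero; by the Bernstein--Zelevinsky geometric lemma only the open cell survives, so $(\ind_{\Sp_k}^{\GL_{2k}}\mathbf{1})_{N_{\GL_{2k}},\psi}\ne0$, which contradicts the Offen--Sayag input $\psi|_{\Sp_k^x}\ne1$ already used in the proof of Claim~\ref{claim:induced from double Weil claim 0}. Hence only $k=0$ contributes, ``$\le$'' follows, and together with ``$\ge$'' this gives~\eqref{eq:proposition one-dim statement}; the ``in particular'' is immediate.

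The real work — and the only part that is more than bookkeeping — is the identification, for $k\ge1$, of the $U_n$-Jacquet modules of the quotients~\eqref{eq:filtration quotients} with $|\det{}|$-twists of $\Ind_{P_{n-2k,2k}}^{\GL_n}\big((\vartheta\otimes\vartheta')\boxtimes\ind_{\Sp_k}^{\GL_{2k}}\mathbf{1}\big)$: this requires chaining Proposition~\ref{proposition:Jacquet module C and k}, Theorem~\ref{theorem:small rep is weakly minimal} and the \emph{trivial}-character case of Claim~\ref{claim:Jacquet modules along Hn of double Weil reps} (in contrast with the generic case used in Theorem~\ref{theorem:tensor of small is non generic}), and keeping track of the several $|\det{}|$- and $\delta$-twists from~\eqref{eq:containment of Jacquet module along maximal unipotent},~\eqref{eq:containment of Jacquet module along maximal unipotent 2} and the intermediate inductions, which must align so that the $k=0$ piece reproduces the \rhs\ of~\eqref{eq:proposition one-dim statement} exactly and the $\GL_{2k}$-factor for $k\ge1$ carries no residual additive-character twist. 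Everything downstream — the genericity argument and the appeal to \cite{OS} — is then the same as in the proof of Theorem~\ref{theorem:tensor of small is non generic}.
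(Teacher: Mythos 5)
Your proposal is correct and follows the paper's skeleton (Frobenius reciprocity, the filtration of $(\theta\otimes\theta')_{U_n}$ by the $\mathbb{W}_k$'s, identification of the $k=0$ piece with $\mathrm{d}^{(n-1)/2}\theta_0\otimes\theta_0'$), but it disposes of the pieces with $0<k<n/2$ by a genuinely different argument. The paper dualizes and applies Frobenius reciprocity a second time, reducing to $\Hom_{P_{n-2k,2k}}(\cdots,j(\tau)^{\vee})$ for the Jacquet module $j(\tau)=\delta_{P_{n-2k,2k}}^{-1/2}\tau_{Z_{n-2k,2k}}$; it then invokes the tempered dichotomy of Lapid--Mao for an irreducible subquotient $\tau_1\otimes\tau_2$ of $j(\tau)$ (either $\tau_2$ is generic, killed by the Offen--Sayag $\Hom$-statement, or $|\omega_{\tau_1}|=|\det{}|^{\alpha}$ with $\alpha>0$, killed by a central character comparison that uses $|\eta|=1$). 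You instead stay on $\GL_n$: since the $k$-th piece is a subquotient of $\Ind_{P_{n-2k,2k}}^{\GL_n}\bigl((\vartheta\otimes\vartheta')\otimes\mathbb{W}'\bigr)$ with $\mathbb{W}'$ a sum of copies of $\ind_{\Sp_k}^{\GL_{2k}}\mathbf{1}$, Rodier's heredity of the Whittaker--Jacquet module under parabolic induction (the same one-surviving-coset computation already used in the proof of Theorem~\ref{theorem:tensor of small is non generic}) shows the induced representation has vanishing $(N_{\GL_n},\psi)$-Jacquet module, hence no generic irreducible subquotient, hence no nonzero map onto $\mathrm{d}^{(n-1)/2}\tau^{\vee}$. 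Two remarks on the trade-off. First, your route needs the functional version of the Offen--Sayag input, $(\ind_{\Sp_k}^{\GL_{2k}}\mathbf{1})_{N_{\GL_{2k}},\psi}=0$, i.e.\ Proposition~2 of \cite{OS} together with the Bernstein--Zelevinsky distribution argument over the infinitely many $N$-orbits on $\lmodulo{\Sp_k}{\GL_{2k}}$ --- exactly the mechanism of Claim~\ref{claim:induced from double Weil claim 0} --- rather than only Proposition~1 of \cite{OS} applied to irreducible generic representations, which is all the paper's route for $0<k<n/2$ requires. Second, your argument uses temperedness of $\tau$ only through its genericity and does not use $|\eta|=1$ in the vanishing step (only $\eta=(\chi\chi')^{-1}$ in matching the $k=0$ term), so, if all the twist bookkeeping you defer is carried out, it would in fact establish the proposition for an arbitrary irreducible generic $\tau$; that is stronger than what the paper states, so this is the one place you should verify with extra care, but I do not see a gap. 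Finally, note that your separate ``$\ge$'' step via Proposition~\ref{proposition:upper heredity of dist 1} is redundant: once $\Hom(\mathbb{W}_k,\mathrm{d}^{(n-1)/2}\tau^{\vee})=0$ for all $k\ge1$, left-exactness of $\Hom(-,\sigma)$ applied to the filtration gives the equality of dimensions outright, which is how the paper proceeds.
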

\begin{remark}\label{remark:uniqueness current state}
If $\tau$ is $(\theta^{\GL_n}_{\chi,\psi},\theta^{\GL_n}_{\chi',\psi'})$-distinguished, 
in particular $\omega_{\tau}(z^2\cdot I_n)=\eta(z^{2n})$ for all $z\in F^*$ ($\omega_{\tau}$ - the central character of $\tau$). It follows that $|\eta|=1$, because a tempered representation is 
unitary. In this case by 
\eqref{eq:proposition one-dim statement} 
$\tau$ enjoys multiplicity one if and only if $\mathrm{I}(\tau,1/2,\eta)$ does.
\end{remark}
\begin{proof}[Proof of Proposition~\ref{proposition:one-dim}]
%If $\tau$ is
%$(\theta^{\GL_n}_{\chi,\psi},\theta^{\GL_n}_{\chi',\psi'})$-distinguished, by Proposition~\ref{proposition:upper heredity of dist 1} the representation
%$\mathrm{I}(\tau,1/2,\eta)$ is $(\theta^{G_n}_{\chi,\psi},\theta^{G_n}_{\chi',\psi'})$-distinguished. It remains to prove
%\eqref{eq:proposition one-dim statement}.
%The arguments resemble those in the proof
%of Theorem~\ref{theorem:tensor of small is non generic} and we use the same notation.
Set $\theta_0=\theta^{\GL_n}_{\chi,\psi}$, $\theta=\theta^{G_n}_{\chi,\psi}$ and similarly $\theta_0'$ and $\theta'$ (with $\chi'$ and $\psi'$).
%Let $\omega_{\tau}$ be the central character of $\tau$. Since $\tau$ is $(\theta_0,\theta_0')$-distinguished, in particular
%$\omega_{\tau}(z^2\cdot I_n)=\eta(z^{2n})$ for all $z\in F^*$. It follows that $|\eta|=1$, because $\tau$ is unitary.
For brevity, put $\mathrm{d}=|\det{}|$. Applying Frobenius reciprocity,
\begin{align*}
\Hom_{G_n}(\theta\otimes\theta',\mathrm{I}(\tau,1/2,\eta)^{\vee})=
\Hom_{\GL_n}((\theta\otimes\theta')_{U_n}|_{\GL_n},\mathrm{d}^{(n-1)/2}\tau^{\vee}).
\end{align*}
Using the notation of Section~\ref{section:Heisenberg jacquet modules}, the representation $(\theta\otimes\theta')_{U_n}$ is filtered by representations
\begin{align*}%\label{rep:quotient in filtration one-dim}
\mathbb{W}_k=(\ind_{\mathrm{St}_{\psi_k}U_n}^{Q_n}(\theta_{C,\psi_k}\otimes\theta'_{C,\psi_{k}^{-1}}))_{U_n},\qquad 0\leq k\leq \lfloor n/2 \rfloor.
\end{align*}
The representation
$\mathbb{W}_0$ is a quotient of $(\theta\otimes\theta')_{U_n}$ and the kernel of the mapping
$(\theta\otimes\theta')_{U_n}\rightarrow\mathbb{W}_0$ is filtered by $\mathbb{W}_k$ with $1\leq k\leq \lfloor n/2 \rfloor$. Regard $\mathbb{W}_k$ as a representation of $\GL_n$ (by restriction from $M_n$).

According to
Proposition~\ref{proposition:Jacquet module C and k},
$\mathbb{W}_0=\mathrm{d}^{(n-1)/2}\theta_0\otimes\theta_0'$. Then
\begin{align*}%\label{rep:homspace quotient in filtration one-dim}
\Hom_{\GL_n}(\mathbb{W}_0
%(\ind_{\mathrm{St}_{\psi_k}U_n}^{Q_n}(\theta_{C,\psi_k}\otimes\theta'_{C,\psi_{k}^{-1}}))_{U_n}
,\mathrm{d}^{(n-1)/2}\tau^{\vee})=
\Hom_{\GL_n}(\theta_0\otimes\theta_0',\tau^{\vee}).
\end{align*}
Thus \eqref{eq:proposition one-dim statement}
will follow, once we prove that for $k>0$,
\begin{align}\label{rep:homspace quotient in filtration one-dim}
\Hom_{\GL_n}(\mathbb{W}_k,%(\ind_{\mathrm{St}_{\psi_k}U_n}^{Q_n}(\theta_{C,\psi_k}\otimes\theta'_{C,\psi_{k}^{-1}}))_{U_n},
\mathrm{d}^{(n-1)/2}\tau^{\vee})=0.
\end{align}
%vanishes.

%For $k=n/2$ we have $\mathbb{W}_k\isomorphic\ind_{\Sp_k}^{\GL_n}1$ (to this end note that
%$(\omega_{\psi}\otimes\omega_{\psi^{-1}})_{H_n}$ is one-dimensional).
%Then %\eqref{rep:homspace quotient in filtration one-dim} becomes
%\begin{align*}
%\Hom_{\GL_n}(\mathbb{W}_k,%(\ind_{\mathrm{St}_{\psi_k}U_n}^{Q_n}(\theta_{C,\psi_k}\otimes\theta'_{C,\psi_{k}^{-1}}))_{U_n},
%\mathrm{d}^{(n-1)/2}\tau^{\vee})=
%\Hom_{\GL_n}(\ind_{\Sp_k}^{\GL_n}1,\mathrm{d}^{(n-1)/2}\tau^{\vee}).
%\end{align*}
If $n$ is even, we claim the following.
\begin{claim}\label{claim:induced from double Weil claim 1}
For any irreducible generic representation $\sigma$ of $\GL_n$, $\Hom_{\GL_n}(\mathbb{W}_{n/2},\sigma)=0$. In particular 
Equality~\eqref{rep:homspace quotient in filtration one-dim} holds for $k=n/2$. 
%The representation $\mathbb{W}_{n/2}$ has a (possibly infinite) filtration whose quotients are all isomorphic to $\ind_{\Sp_{n/2}}^{\GL_n}1$.
\end{claim}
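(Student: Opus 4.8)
The plan is to reduce $\Hom_{\GL_n}(\mathbb{W}_{n/2},\sigma)=0$ to a twisted‑Jacquet‑module vanishing that is settled almost exactly as in Claim~\ref{claim:induced from double Weil claim 0}. First note the ``in particular'' clause is immediate: since $\tau$ is tempered, $\tau^{\vee}$ is tempered hence generic, so $\mathrm{d}^{(n-1)/2}\tau^{\vee}$ is irreducible and generic, and Equality~\eqref{rep:homspace quotient in filtration one-dim} for $k=n/2$ follows from the first assertion. For the first assertion I would use the Whittaker model: an irreducible generic $\sigma$ embeds in $\Ind_{N_{\GL_n}}^{\GL_n}(\psi)$ for a generic character $\psi$ of $N_{\GL_n}$, so by left‑exactness of $\Hom$ and Frobenius reciprocity $\Hom_{\GL_n}(\mathbb{W}_{n/2},\sigma)$ embeds into $\Hom_{N_{\GL_n}}(\mathbb{W}_{n/2},\psi)$; since this last space is the algebraic dual of $(\mathbb{W}_{n/2})_{N_{\GL_n},\psi}$, it suffices to prove $(\mathbb{W}_{n/2})_{N_{\GL_n},\psi}=0$.

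Next I would unwind $\mathbb{W}_{n/2}=(\ind_{\mathrm{St}_{\psi_{n/2}}U_n}^{Q_n}(\theta_{C,\psi_{n/2}}\otimes\theta'_{C,\psi_{n/2}^{-1}}))_{U_n}$. Since $U_n$ is normal in $Q_n$, $N_{\GL_n}$ normalizes $U_n$, and $N_n=N_{\GL_n}\ltimes U_n$ (the Borel of $M_n\cong\GL_n\times G_0$ being $B_{\GL_n}\times G_0$), iterating Jacquet functors gives $(\mathbb{W}_{n/2})_{N_{\GL_n},\psi}=(\ind_{\mathrm{St}_{\psi_{n/2}}U_n}^{Q_n}(\theta_{C,\psi_{n/2}}\otimes\theta'_{C,\psi_{n/2}^{-1}}))_{N_n,\psi'}$, where $\psi'$ is the character of $N_n$ equal to $\psi$ on $N_{\GL_n}$ and to $1$ on $U_n$ (consistent, as $C$ acts trivially on the inducing representation and hence on the induced space). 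Now Theorem~\ref{theorem:small rep is weakly minimal} identifies $\theta_{C,\psi_{n/2}}\otimes\theta'_{C,\psi_{n/2}^{-1}}$ with a direct sum of (possibly infinitely many) copies of $\omega_{\psi}\otimes\omega_{\psi^{-1}}$ --- a representation trivial on $C$ --- via the epimorphism $\ell$ onto $\widetilde{\Sp}_{n/2}\ltimes H_n$ used in the proof of that theorem (the $G_0$ part, acting by a character, is ignored as there). So the goal reduces to $(\ind_{\mathrm{St}_{\psi_{n/2}}U_n}^{Q_n}(\omega_{\psi}\otimes\omega_{\psi^{-1}}))_{N_n,\psi'}=0$.

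At this point I would rerun the $l$‑sheaf argument from the proof of Claim~\ref{claim:induced from double Weil claim 0} essentially verbatim, the only change being that the character $\psi'$ is now trivial on $U_n$ rather than generic on all of $N_n$. Setting $\pi_0=\omega_{\psi}\otimes\omega_{\psi^{-1}}$, $G=P_{n,1}^{\circ}$, $V=Z_{n,1}$, $P=\Sp_{n/2}\ltimes V$, $X=\lmodulo{P}{G}$, $\pi=\ind_{P}^{G}(\pi_0)$, and using $\lmodulo{C}{Q_n}\cong G$ (which carries $\psi'$ to a character $\tilde{\psi}$ of $N_{\GL_n}V$ that is generic on $N_{\GL_n}$ and trivial on $V$), Theorem~6.9 of \cite{BZ1} applied to the $l$‑sheaf attached to $\pi$ reduces matters to $\Hom_{(N_{\GL_n}V)^x}(\rconj{x^{-1}}\pi_0,\tilde{\psi})=0$ for a set of representatives, which one may take in $P_{n-1,1}^{\circ}$. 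Exactly as in Claim~\ref{claim:induced from double Weil claim 0}, $(N_{\GL_n}V)^x=\Sp_{n/2}^x\ltimes V$ with $\Sp_{n/2}^x=\rconj{x^{-1}}\Sp_{n/2}\cap N_{\GL_n}$, and since $\tilde{\psi}|_V=1$ this Hom equals $\Hom_{\Sp_{n/2}^x}((\rconj{x^{-1}}\pi_0)_{V,1},\tilde{\psi})$. Here is where the ``$(1)$‑vs‑$(2)$'' swap occurs: by part $(1)$ of Claim~\ref{claim:Jacquet modules along Hn of double Weil reps}, $(\pi_0)_{V,1}=(\omega_{\psi}\otimes\omega_{\psi^{-1}})_{H_n,1}$ is the trivial one‑dimensional representation of $\Sp_{n/2}$, so $(\rconj{x^{-1}}\pi_0)_{V,1}$ is trivial as a representation of $\Sp_{n/2}^x$, and the Hom vanishes provided $\tilde{\psi}|_{\Sp_{n/2}^x}\ne1$. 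This last non‑vanishing is precisely the statement of Offen and Sayag (\cite{OS}, Proposition~2) already invoked in the proof of Claim~\ref{claim:induced from double Weil claim 0}. The main point requiring care is the bookkeeping in the second step --- verifying that the character induced on $U_n$ is the trivial one, so that part $(1)$ rather than part $(2)$ of Claim~\ref{claim:Jacquet modules along Hn of double Weil reps} applies; the geometric core is identical to that of Claim~\ref{claim:induced from double Weil claim 0}, and in fact slightly simpler, since here $(\pi_0)_{V,1}$ is trivial over the whole of $\Sp_{n/2}$.
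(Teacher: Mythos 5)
Your proposal is correct, but it follows a genuinely different route from the paper's. The paper computes $\mathbb{W}_{n/2}$ outright: after using Theorem~\ref{theorem:small rep is weakly minimal} to replace $\theta_{C,\psi_k}\otimes\theta'_{C,\psi_k^{-1}}$ by copies of $\omega_{\psi}\otimes\omega_{\psi^{-1}}$, it notes that the untwisted $U_n$-Jacquet functor passes through the compact induction (via Lemma~\ref{lemma:Jacquet kernel as integral}, since $U_n$ is normal in $Q_n$), so that part (1) of Claim~\ref{claim:Jacquet modules along Hn of double Weil reps} identifies $(\ind_{\mathrm{St}_{\psi_k}U_{n}}^{Q_{n}}(\omega_{\psi}\otimes\omega_{\psi^{-1}}))_{U_n}$ with the symplectic model $\ind_{\Sp_{n/2}}^{\GL_n}1$; the claim then follows by quoting Proposition~1 of \cite{OS} (disjointness of the symplectic and Whittaker models) as a black box. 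You instead convert $\Hom_{\GL_n}(\mathbb{W}_{n/2},\sigma)$ into a twisted Jacquet module of the induced representation along $N_n$ with a character that is generic on $N_{\GL_n}$ but trivial on $U_n$, and rerun the $l$-sheaf argument of Claim~\ref{claim:induced from double Weil claim 0}, landing on Proposition~2 of \cite{OS}. In effect you re-derive the case you need of Offen--Sayag's Proposition~1 from their Proposition~2; your bookkeeping is right (part (1) rather than part (2) of Claim~\ref{claim:Jacquet modules along Hn of double Weil reps} applies, and since the $V$-coinvariants are then trivial on all of $\Sp_{n/2}$ the representatives need not even be confined to $P_{n-1,1}^{\circ}$). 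What the paper's version buys is brevity and the explicit identification of $\mathbb{W}_{n/2}$ with (a filtration by copies of) $\ind_{\Sp_{n/2}}^{\GL_n}1$; what yours buys is self-containedness modulo the purely combinatorial Proposition~2 of \cite{OS}. Your dispatch of the ``in particular'' clause (temperedness of $\tau$ gives genericity of $\mathrm{d}^{(n-1)/2}\tau^{\vee}$) is exactly the intended use.
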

The proof will be given below. 

Assume $0<k<n/2$. The representation $\theta_{C,\psi_k}$ is embedded in a finite direct sum of representations
$\vartheta\otimes(\theta^{G_{2k}})_{C_{U_{2k}},\psi_k}$ (Proposition~\ref{proposition:Jacquet module C and k}). The representation $\vartheta$ is semisimple, and
by Theorem~\ref{theorem:small rep is weakly minimal} the representation $(\theta^{G_{2k}})_{C_{U_{2k}},\psi_k}$ is also semisimple.
Hence $\theta_{C,\psi_k}\otimes\theta'_{C,\psi_{k}^{-1}}$ is embedded in a semisimple representation, which is the finite direct sum of
$\Pi_k$ ($\Pi_k=\vartheta\otimes\vartheta'\otimes(\theta^{G_{2k}})_{C_{U_{2k}},\psi_k}\otimes({\theta'}^{G_{2k}})_{C_{U_{2k}},\psi_k^{-1}}$).
Therefore, using the exactness of induction and Jacquet functors, $\mathbb{W}_k$ is a quotient of
$(\ind_{\mathrm{St}_{\psi_k}U_n}^{Q_n}(\bigoplus\Pi_k))_{U_n}$. We conclude that we may replace $\mathbb{W}_k$ in \eqref{rep:homspace quotient in filtration one-dim} with $(\ind_{\mathrm{St}_{\psi_k}U_n}^{Q_n}\Pi_k)_{U_n}$.
An application of \cite{BZ2} (Theorem~5.2) yields
\begin{align*}
(\ind_{\mathrm{St}_{\psi_k}U_n}^{Q_n}\Pi_k)_{U_n}=
\ind_{P_{n-2k,2k}}^{\GL_n}(\vartheta\otimes\vartheta'\otimes\mathbb{W}'),
\end{align*}
where
\begin{align*}
\mathbb{W}'=
(\ind_{\mathrm{St}_{\psi_k}U_{2k}}^{Q_{2k}}(\theta^{G_{2k}}_{C_{U_{2k}},\psi_k}\otimes{\theta'}^{G_{2k}}_{C_{U_{2k}},\psi_{k}^{-1}}))_{U_{2k}}|_{\GL_{2k}}.
\end{align*}
%Here $P_{n-2k,2k}=(\GL_{n-2k}\times\GL_{2k})\ltimes Z_{n-2k,2k}$ is a standard parabolic subgroup of $\GL_n$
%($Z_{n-2k,2k}$ was given in Section~\ref{section:Heisenberg jacquet modules}).
Note that the relevant double coset space (for \cite{BZ2} Theorem~5.2) is $\rmodulo{\lmodulo{(Q_{n-2k}\cap Q_n)}{Q_n}}{Q_n}$, there is only one representative to consider.

Hence to deduce \eqref{rep:homspace quotient in filtration one-dim}, it is enough to prove
\begin{align*}%\label{rep:homspace quotient in filtration one-dim}
\Hom_{\GL_n}(\ind_{P_{n-2k,2k}}^{\GL_n}(\vartheta\otimes\vartheta'\otimes
\mathbb{W}'),\mathrm{d}^{(n-1)/2}\tau^{\vee})=0.
\end{align*}
The \lhs\ equals
\begin{align}\nonumber
&\Hom_{\GL_n}(\mathrm{d}^{(1-n)/2}\tau,\ind_{P_{n-2k,2k}}^{\GL_n}(\delta_{P_{n-2k,2k}}(\vartheta\otimes\vartheta'\otimes\mathbb{W}')^{\vee}))\nonumber
\\&=\Hom_{P_{n-2k,2k}}(\mathrm{d}^{(1-n)/2}\tau_{Z_{n-2k,2k}},\delta_{P_{n-2k,2k}}(\vartheta\otimes\vartheta'\otimes\mathbb{W}')^{\vee})\nonumber
\\\label{eq:last space}&=\Hom_{P_{n-2k,2k}}(\mathrm{d}^{(1-n)/2}\delta_{P_{n-2k,2k}}^{-1/2}\vartheta\otimes\vartheta'\otimes\mathbb{W}',j(\tau)^{\vee}).
\end{align}
Here $j(\tau)=\delta_{P_{n-2k,2k}}^{-1/2}\tau_{Z_{n-2k,2k}}$ (the normalized Jacquet functor). It suffices to show that
the last space \eqref{eq:last space} vanishes, when $j(\tau)$ is replaced by any of its irreducible subquotients.
Let $\tau_1\otimes\tau_2$ be one such subquotient. Then \eqref{eq:last space} vanishes if either
\begin{align}
\label{eq:last tensor 1}&\Hom_{\GL_{2k}}(\mathbb{W}',\mathrm{d}^{(2k-1)/2}\tau_2^{\vee})\text{ or}\\
\label{eq:last tensor 2}&\Hom_{\GL_{n-2k}}(\mathrm{d}^{(1-n-2k)/2}\vartheta\otimes\vartheta',\tau_1^{\vee})
\end{align}
are zero. As explained in the proof of Proposition~4.1 of Lapid and Mao \cite{LM6}, since $\tau$ is tempered, either $\tau_2$ is generic,
or the central character $\omega_{\tau_1}$ of $\tau_1$ satisfies $|\omega_{\tau_1}|=|\det{}|^{\alpha}$ for some $\alpha>0$. In the former case
\eqref{eq:last tensor 1} vanishes by Claim~\ref{claim:induced from double Weil claim 1}. In the latter case \eqref{eq:last tensor 2} vanishes. Indeed,
$z^2\cdot I_{n-2k}$ acts on $\vartheta\otimes\vartheta'$ by $\mathrm{d}^{(-1+n+2k)/2}\eta^{-1}(z^{2(n-2k)})$ and since
$|\eta|=1$, this action is unitary on
$\mathrm{d}^{(1-n-2k)/2}\vartheta\otimes\vartheta'$, but $\omega_{\tau_1}$ is not unitary.
\begin{proof}[Proof of Claim~\ref{claim:induced from double Weil claim 1}]
Put $k=n/2$ and $\pi=\ind_{\mathrm{St}_{\psi_k}U_n}^{Q_n}(\theta_{C,\psi_k}\otimes\theta'_{C,\psi_k^{-1}})$. We need to prove 
\begin{align*}%\label{rep:homspace quotient in filtration one-dim}
\Hom_{\GL_n}(\pi_{U_n},%(\ind_{\mathrm{St}_{\psi_k}U_n}^{Q_n}(\theta_{C,\psi_k}\otimes\theta'_{C,\psi_{k}^{-1}}))_{U_n},
\sigma)=0.
\end{align*}
We will show that $\pi_{U_n}$ has a filtration,
whose quotients are all isomorphic to $\ind_{\Sp_{n/2}}^{\GL_n}1$. 
Then since $\sigma$ is irreducible and generic, $\Hom_{\GL_n}(\ind_{\Sp_{n/2}}^{\GL_n}1,\sigma)=0$ 
(\cite{OS} Proposition~1, take $\mathcal{H}^{0,n}$) and the result follows. 

We turn to prove the filtration of $\pi_{U_n}$. As in the proof of Claim~\ref{claim:induced from double Weil claim 0}, 
Theorem~\ref{theorem:small rep is weakly minimal} implies that $\pi_{U_n}$ is filtered by copies of the representation 
$(\ind_{\mathrm{St}_{\psi_k}U_{n}}^{Q_{n}}(\omega_{\psi}\otimes\omega_{\psi^{-1}}))_{U_n}$. We prove
\begin{align}\label{eq:claim desp 1}
(\ind_{\mathrm{St}_{\psi_k}U_{n}}^{Q_{n}}(\omega_{\psi}\otimes\omega_{\psi^{-1}}))_{U_n}=\ind_{\Sp_{n/2}}^{\GL_n}1.
\end{align}
(Recall that the \lhs\ is regarded as a representation of $\GL_n$.) 

Since $U_n$ is normal in $Q_n$, Lemma~\ref{lemma:Jacquet kernel as integral} implies
\begin{align*}%\label{eq:rep k=n/2 part 2}
(\ind_{\mathrm{St}_{\psi_k}U_{n}}^{Q_{n}}(\omega_{\psi}\otimes\omega_{\psi^{-1}}))_{U_n}=
\ind_{\mathrm{St}_{\psi_k}U_{n}}^{Q_{n}}((\omega_{\psi}\otimes\omega_{\psi^{-1}})_{H_n}).
\end{align*}
In more detail, if $f$ belongs to the space of $\ind_{\mathrm{St}_{\psi_k}U_{n}}^{Q_{n}}(\omega_{\psi}\otimes\omega_{\psi^{-1}})$,
the Jacquet-Langlands integral takes the form
\begin{align*}
\int_{\mathcal{U}}f(gu)\ du=\int_{\mathcal{U}}
(\omega_{\psi}\otimes\omega_{\psi^{-1}})(\rconj{g}u)f(g)\ du,
\end{align*}
for a compact subgroup $\mathcal{U}<U_n$. Because $f$ is compactly supported modulo $\mathrm{St}_{\psi_k}U_{n}$, this integral vanishes for all $g\in Q_n$ if and only if
$f(g)$ belongs to the space of $(\omega_{\psi}\otimes\omega_{\psi^{-1}})({H_n})$ for all $g$. It remains to use the exactness of induction.

According to Claim~\ref{claim:Jacquet modules along Hn of double Weil reps}, $(\omega_{\psi}\otimes\omega_{\psi^{-1}})_{H_n}$ is the trivial one-dimensional 
representation of $\Sp_k$ and %, hence 
%$\ind_{\mathrm{St}_{\psi_k}U_{n}}^{Q_{n}}((\omega_{\psi}\otimes\omega_{\psi^{-1}})_{H_n})=
%\ind_{\Sp_k}^{\GL_n}1$ and 
\eqref{eq:claim desp 1} holds.
\end{proof}
\end{proof}
We can now improve Corollary~\ref{corollary:upper hered dist} for tempered representations:
\begin{corollary}\label{corollary:upper hered dist for tempered}
Let $\tau$ be an irreducible tempered representation of $\GL_n$. Then
$\tau$ is distinguished if and only if $\mathrm{I}(\tau,1/2,1)$ is distinguished.
\end{corollary}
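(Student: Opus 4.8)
The plan is to deduce this directly from Proposition~\ref{proposition:one-dim}, so essentially no new ideas are needed; the work is bookkeeping with the additive characters. First I would recall the conventions set just after Corollary~\ref{corollary:upper hered dist}: because $\theta_{\chi,\psi}=\chi\theta_{1,\psi}$, one normalizes to $\chi=\chi'=1$, and then a representation of $\GL_n$ \emph{or} of $G_n$ is called distinguished precisely when for every nontrivial additive character $\psi$ of $F$ there is a nontrivial additive character $\psi'$ with respect to which it is $(\psi,\psi')$-distinguished. Thus the corollary reduces to comparing, for each fixed pair $(\psi,\psi')$, the spaces $\Hom_{\GL_n}(\theta^{\GL_n}_{1,\psi}\otimes\theta^{\GL_n}_{1,\psi'},\tau^{\vee})$ and $\Hom_{G_n}(\theta^{G_n}_{1,\psi}\otimes\theta^{G_n}_{1,\psi'},\mathrm{I}(\tau,1/2,1)^{\vee})$.

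The key step is to apply Proposition~\ref{proposition:one-dim} with $\chi=\chi'=1$. Then $\eta=(\chi\chi')^{-1}=1$, so the hypothesis $|\eta|=1$ of that proposition is automatic, and $\mathrm{I}(\tau,1/2,\eta)=\mathrm{I}(\tau,1/2,1)$. Since $\tau$ is irreducible tempered, the proposition applies and yields that the two Hom-spaces above have the same dimension; in particular one is nonzero if and only if the other is, i.e. $\tau$ is $(\psi,\psi')$-distinguished if and only if $\mathrm{I}(\tau,1/2,1)$ is. Quantifying over $\psi$ and $\psi'$ then gives both directions: if $\tau$ is distinguished, then for each $\psi$ a witnessing $\psi'$ for $\tau$ is also a witnessing $\psi'$ for $\mathrm{I}(\tau,1/2,1)$, and conversely. (When $n$ is even the characters play no role at all, as $\theta_{1,\psi}$ is independent of $\psi$, so the statement is even simpler there.)

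Since Proposition~\ref{proposition:one-dim} — which itself rests on Theorem~\ref{theorem:small rep is weakly minimal}, Proposition~\ref{proposition:Jacquet module C and k}, the temperedness argument borrowed from Lapid--Mao \cite{LM6}, and the Klyachko-model results of Offen--Sayag \cite{OS} — has already been established, I do not expect any substantial obstacle in the corollary itself: it is a formal consequence. The only point that wants a moment's care is precisely the one flagged above, namely that fixing $\chi=\chi'=1$ forces $\eta=1$ and hence $|\eta|=1$, so that the hypothesis of Proposition~\ref{proposition:one-dim} costs nothing, together with matching the existential quantifier on $\psi'$ in the definition of ``distinguished'' on the two sides.
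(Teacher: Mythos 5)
Your proposal is correct and matches the paper's intent exactly: the paper offers no separate argument for this corollary, treating it as an immediate consequence of Proposition~\ref{proposition:one-dim} with $\chi=\chi'=1$ (hence $\eta=1$ and $|\eta|=1$ automatically), combined with the definition of ``distinguished'' as quantified over the additive characters $\psi,\psi'$. Your bookkeeping with the characters is the right and only point to check.
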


\section{The small representation of $\SO_{2n+1}$}\label{section:relevance to SO(2n+1)}
Bump, Friedberg and Ginzburg \cite{BFG} constructed and studied the
small representations for $\SO_{2n+1}$. In this section we briefly recall their results and formulate our results for $\SO_{2n+1}$.
The cover $\widetilde{SO}_{2n+1}$ was obtained by restricting the $4$-fold cover of
$\SL_{2n+1}$ of Matsumoto \cite{Mats}. It is a ``double cover"
in the sense that the square of the cocycle is trivial on the kernel of the spinor norm.

We use the same notation of $G_n$, e.g., $B_n=B_{\SO_{2n+1}}$ (see Section~\ref{subsection:GSpin}), $T_n$ is the diagonal torus and $Q_k$ is a standard maximal parabolic subgroup.
If $(a,g)\in\GL_k\times SO_{2(n-k)+1}$, $(a,g)$ is embedded in $M_k$ as $\mathrm{diag}(a,g,J_k\transpose{a}^{-1}J_k)$.
The block compatibility formula now reads (see \cite{BFG} (2.20))
\begin{align*}%\label{eq:block-compatibility \SO_{2n+1}}
&\sigma_{\SL_{2n+1}}((a,g),(a',g'))\\&=\sigma_{\SL_{k+1}}(\mathrm{diag}(a,\det{a}^{-1}),\mathrm{diag}(a',\det{a'}^{-1}))^2(\det{a},\det{a'})_4\sigma_{\SL_{2(n-k)+1}}(g,g').
\end{align*}
The benefit of this cover is that the preimages
of $\GL_k$ and $SO_{2(n-k)+1}$ commute, thus
the tensor can be used to describe representations of Levi subgroups. Restriction of the cover to $\GL_k$ is a double cover.

The small representation $\theta=\theta^{\SO_{2n+1}}$ is unique; it is the representation of $\widetilde{SO}_{2n+1}$ corresponding to the
exceptional character
\begin{align*}
\xi(\mathfrak{s}(\mathrm{diag}(t_1^2,\ldots,t_n^2,1,t_n^{-2},\ldots,t_1^{-2})))=\prod_{i=1}^n|t_i|^{n-i+1}.
\end{align*}
%$\xi(\mathfrak{s}(t))=\xi_0(t)$ ($t\in T_n^2$), % (using the notation of Section~\ref{subsection:The exceptional representations}),
%where $\xi_0$ is the restriction to $T_n^2(=p(C_{\widetilde{T}_n}))$ of the character
%\begin{align*}
%\mathrm{diag}(t_1,\ldots,t_n,1,t_n^{-1},\ldots,t_1^{-1})\mapsto\prod_{i=1}^n|t_i|^{(n-i+1)/2}.
%\end{align*}
According to \cite{BFG} (Theorem~2.3), $\theta_{U_k}=\theta^{\GL_k}\otimes\theta^{SO_{2(n-k)+1}}$, where $\theta^{\GL_k}$ was explicitly
given, and by \cite{BFG} (Theorem~2.6) and \cite{BFG2} (Proposition~3), $\theta_{U_1,\psi^{(1)}}=0$ if the length of $\psi^{(1)}$ is nonzero
(using the notation of Section~\ref{subsection:The exceptional representations}).

Theorems~\ref{theorem:tensor of small is non generic} and \ref{theorem:small rep is weakly minimal} remain valid as stated. Proposition~\ref{proposition:Jacquet module C and k} now takes the form
\begin{align*}
\theta_{C,\psi_k}=\theta^{\GL_{n-2k}}\otimes(\theta^{SO_{4k+1}})_{C_{U_{2k}},\psi_k}.
\end{align*}
Here $\theta^{\GL_{n-2k}}$ is uniquely determined.
Indeed, this equality replaces \eqref{eq:equality to correct if k = 0} because $\theta_{U_{n-2k}}$ is a tensor of representations.

Regarding Proposition~\ref{proposition:upper heredity of dist 1}, assume $\tau$ is $(\psi,\psi')$-distinguished. Twisting $\tau$ by some square trivial character,
we obtain a $(\psi,\psi)$-distinguished representation. Then, perhaps after using another twist of $\tau$, it becomes $(\theta^{\GL_n},\theta^{\GL_n})$-distinguished
for the exceptional representation $\theta^{\GL_n}$ such that $\theta_{U_n}=\theta^{\GL_n}$. Now the proof of the proposition proceeds as in the case
of $G_n$. So, in this case one must start with a $(\theta^{\GL_n},\theta^{\GL_n})$-distinguished representation of $\GL_n$, in order to obtain
a distinguished representation $\mathrm{I}(\tau,1/2,1)$ of $\SO_{2n+1}$. Corollary~\ref{corollary:upper hered dist LQ} is applicable
for $\tau$ such that Proposition~\ref{proposition:upper heredity of dist 1} is valid. Proposition~\ref{proposition:one-dim} %and Corollary~\ref{corollary:upper hered dist for tempered}
remains valid.

%\bibliographystyle{alpha}
%\bibliography{../../bib}

\end{document}